\newtheorem{thm}{Theorem}[section]
\newtheorem{conj}{Conjecture}[section]
\newtheorem{coro}{Corollary}
\newtheorem{definition}{Definition}
\newtheorem{lem}[thm]{Lemma}
\newtheorem{remark}{Remark}
\newenvironment{proof}{{\medbreak\noindent\it  Proof.}\,}{\hfill$\square$\medbreak}
\tikzset{  
	-stealth,auto,node distance =1.5 cm and 1.3 cm, thick,
	state/.style ={circle, draw, inner sep=0.3pt}, 
	point/.style = {circle, draw, inner sep=0.18cm, fill, node contents={}},  
	el/.style = {inner sep=2.5pt, align=right, sloped}  
}  
\begin{document}


\title{Arc-disjoint out- and in-branchings in compositions of  digraphs}
\author{J. Bang-Jensen\thanks{Department of Mathematics and Computer Science, University of Southern Denmark,Odense Denmark.(email: jbj@imada.sdu.dk)}\and Y. Wang \thanks{Department of Mathematics and Computer Science, University of Southern Denmark,Odense Denmark and School of Mathematics, Shandong University, Jinan 250100, China. (email:yunwang@imada.sdu.dk,wangyun\_sdu@163.com)}}

\maketitle

\begin{abstract}
  An out-branching $B^+_u$ (in-branching $B^-_u$) in a digraph $D$  is a connected spanning subdigraph of $D$ in which every vertex except the vertex $u$, called the root,  has in-degree (out-degree)  one. A {\bf good $\mathbf{(u,v)}$-pair} in $D$ is a pair of branchings $B^+_u,B^-_v$ which have no arc in common.
  Thomassen proved that  is  NP-complete to decide if a digraph has any good pair. A digraph is {\bf semicomplete} if it has no pair of non adjacent vertices. A {\bf semicomplete composition} is any digraph $D$  which is obtained from a semicomplete digraph $S$  by substituting an arbitrary digraph $H_x$ for each vertex $x$ of $S$.

  Recently the authors of this paper gave   a complete classification of semicomplete digraphs which have a good $(u,v)$-pair,  where $u,v$ are prescribed vertices of $D$. They also gave a polynomial algorithm which for a given semicomplete digraph $D$ and vertices $u,v$ of $D$, either produces a good $(u,v)$-pair in $D$ or a certificate that $D$ has  such pair. In this paper we show how to use the result for semicomplete digraphs to completely  solve the  problem of deciding whether a given semicomplete composition $D$, has a good $(u,v)$-pair for given vertices $u,v$ of $D$. Our solution implies that the problem is polynomially solvable for all semicomplete compositions.  In particular our result implies that there is a polynomial algorithm for deciding whether a given quasi-transitive digraph $D$ has a good $(u,v)$-pair for given vertices $u,v$ of $D$. This confirms a conjecture of Bang-Jensen and Gutin from 1998.

\end{abstract}
\noindent{\bf Keywords:} \texttt{arc-disjoint subdigraphs; in-branchings; out-branchings; semicomplete digraph; polynomial algorithm; semicomplete composition; quasi-transitive digraph }

\section{Introduction}\label{intro}
Notation follows \cite{bang2009} so we only repeat a few definitions here (see also Section
\ref{sec:term}).
Let $D=(V,A)$ be a digraph. An {\bf out-tree} ({\bf in-tree}) is an oriented tree in which every vertex except one, called the {\bf root}, has in-degree (out-degree) one. An {\bf out-branching} ({\bf in-branching}) of $D$ is a spanning out-tree (in-tree) in $D$. For a subdigraph $H$ of $D$ and a vertex $s$ of $H$ we denote by $B_{s,H}^+$, (resp., $B_{s,H}^-$) an arbitrary out-branching (resp., in-branching) rooted at $s$ in $H$. To simplify the notation, we set $B_s^+=B_{s,D}^+$ and $B_s^-=B_{s,D}^-$.

A digraph $D$ is {\bf strong} if there exists a path from $x$ to $y$  in $D$ for every ordered pair of distinct vertices $x$, $y$ of $D$ and $D$ is {\bf $k$-arc-strong} if $D\setminus{}A^{\prime}$ is strong for every subset $A^{\prime} \subseteq A$ of size at most $k - 1$. For a subset $X$ of $V$, we denote by $D\left\langle X \right\rangle$ the subdigraph of $D$ induced by $X$. 

The following well-known theorem, due to Edmonds, provides a characterization for the existence of $k$ arc-disjoint out-branchings rooted at the same vertex. 
\begin{thm}\label{edmonds1973}\cite{edmonds1973} {\bf(Edmonds' Branching Theorem)} 
A directed multigraph $D = (V,A)$ with a special vertex $s$ has $k$ arc-disjoint out-branchings rooted at $s$ if and only if
\begin{equation}
  \label{kbranchcond}
  d^-(X) \geq k,\;\; \forall \;\; \emptyset\neq X \subseteq V - s.
  \end{equation}
\end{thm}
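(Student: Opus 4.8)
The plan is to prove the two implications separately, with essentially all of the work in the sufficiency direction. Necessity is immediate: suppose $D$ has $k$ arc-disjoint out-branchings $B_1,\dots,B_k$ rooted at $s$, and fix a nonempty $X\subseteq V-s$. Since each $B_i$ is spanning and rooted at $s\notin X$, every vertex of $X$ is reachable from $s$ inside $B_i$, so $B_i$ uses at least one arc entering $X$; thus $d^-_{B_i}(X)\ge 1$. As the $B_i$ are pairwise arc-disjoint, $d^-_D(X)\ge\sum_{i=1}^{k} d^-_{B_i}(X)\ge k$, which is exactly (\ref{kbranchcond}).

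For sufficiency I would argue by induction on $k$, the case $k=0$ being vacuous. The inductive step reduces to the following one-branching lemma: \emph{if $d^-(X)\ge k\ge 1$ for every nonempty $X\subseteq V-s$, then $D$ contains an out-branching $B^+_s$ such that $D'=D\setminus A(B^+_s)$ still satisfies $d^-_{D'}(X)\ge k-1$ for every nonempty $X\subseteq V-s$.} Granting the lemma, $D'$ has $k-1$ arc-disjoint out-branchings rooted at $s$ by the induction hypothesis, and adjoining $B^+_s$ yields the required $k$.

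To prove the lemma I would grow the branching one arc at a time, maintaining an out-tree $F$ rooted at $s$ spanning a vertex set $U\ni s$ together with the invariant that $d^-_{D\setminus A(F)}(X)\ge k-1$ for every nonempty $X\subseteq V-s$; one starts from $U=\{s\}$ and $F$ empty. Call a nonempty $X\subseteq V-s$ \emph{tight} if $d^-_{D\setminus A(F)}(X)=k-1$. Adding an arc $(u,v)$ with $u\in U$, $v\notin U$ preserves the invariant precisely when no tight set contains $v$ but excludes $u$. The engine of the argument is submodularity of the in-degree function,
\[
d^-(X)+d^-(Y)\ \ge\ d^-(X\cap Y)+d^-(X\cup Y),
\]
which, combined with the invariant, shows that two intersecting tight sets have tight intersection and union; hence each vertex lies in a well-defined minimal tight set whenever it lies in any. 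A second, decisive observation is that every arc of $F$ has its head in $U$, so a set contained in $V\setminus U$ has in-degree unchanged by the deletion of $A(F)$ and therefore still satisfies $d^-\ge k$; in particular no subset of $V\setminus U$ is tight. Consequently the minimal tight set $m$ of any frontier vertex must meet $U$, and a short in-degree comparison between $m$ and the part $m\setminus U$ (which has in-degree $\ge k>k-1$, yet every arc entering it from outside $m$ also enters $m$) forces the existence of an arc from $U\cap m$ into $m\setminus U$; this is a safe extending arc. Repeating until $U=V$ produces $B^+_s$.

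The main obstacle is exactly this extension step: showing that while $U\ne V$ a safe arc always exists. The delicate point is to rule out the apparent danger that every arc leaving $U$ enters a tight set avoiding its tail; the two observations above — minimal tight sets are well defined by uncrossing, and tree arcs never enter $V\setminus U$ so that subsets of $V\setminus U$ retain in-degree $\ge k$ — are what make this impossible. One should also check the boundary case $k=1$, where the invariant is vacuous and the claim is simply that every vertex is reachable from $s$, which follows directly from $d^-(X)\ge 1$.
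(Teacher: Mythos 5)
Your overall architecture is sound, and it is worth noting that the paper itself contains no proof of this theorem: it is quoted from Edmonds with a pointer to Lov\'asz's constructive proof, so the only meaningful comparison is with that standard argument, which is exactly what you are reconstructing. Your necessity argument is correct; so are the induction on $k$ via the one-branching lemma, the tree-growing invariant $d^-_{D\setminus A(F)}\geq k-1$, the uncrossing of tight sets via submodularity, and the observation that no subset of $V\setminus U$ can be tight (tree arcs have heads in $U$).

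The gap is the unproved sentence ``this is a safe extending arc.'' Your $m$ is minimal only among tight sets containing the \emph{chosen} frontier vertex $v$, but the arc $u'v'$ produced by your counting may have head $v'\neq v$; minimality of $m$ then only forbids a tight set containing $v$, and does not exclude a tight set $X$ with $v'\in X$ and $u'\notin X$. This is a real failure, not just a missing justification. Take $k=2$, $V=\{s,a_1,a_2,v,w,c\}$ and arcs $sa_1,sa_2,sv,va_1,a_1c,a_2w,wc,ca_1,ca_2,cv,cw$; one checks $d^-(X)\geq 2$ for every nonempty $X\subseteq V-s$. With $F=\{sa_1,sa_2\}$ and $U=\{s,a_1,a_2\}$ (a state your procedure can legitimately reach, and the invariant holds), the tight sets of $D'=D\setminus A(F)$ are the chain $\{a_2\}\subsetneq\{a_2,w,c\}\subsetneq\{a_1,a_2,w,c\}\subsetneq\{a_1,a_2,v,w,c\}$. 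For the frontier vertex $v$, the minimal tight set containing it is $m=\{a_1,a_2,v,w,c\}$, and the arcs from $U\cap m$ to $m\setminus U$ are exactly $a_1c$ and $a_2w$; but $a_1c$ is \emph{unsafe}, since the tight set $\{a_2,w,c\}$ contains its head and avoids its tail (deleting $a_1c$ leaves that set with in-degree $0<k-1$). The repair is small but essential: choose $m$ inclusion-minimal among \emph{all} tight sets meeting $V\setminus U$ (equivalently, pick a frontier vertex whose minimal tight set is smallest), and handle separately the easy case where no tight set meets $V\setminus U$, where any of the at least $k$ arcs entering $V\setminus U$ is safe. With that choice, if the arc $u'v'$ given by your counting were unsafe via a tight $X$ with $v'\in X$, $u'\notin X$, then $X\cap m$ would be a tight set meeting $V\setminus U$ and properly contained in $m$ (it misses $u'$), a contradiction; this closes the proof.
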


Note that, by Menger's Theorem, (\ref{kbranchcond}) is equivalent to the existence of $k$ arc-disjoint $(s,v)$-paths for every $v\in V-s$. Furthermore, (\ref{kbranchcond}) can be checked in polynomial time via maximum flow calculations, see e.g.,
\cite[Section 5.4]{bang2009}.
Lov\'asz \cite{lovaszJCT21} gave a constructive proof of Theorem \ref{edmonds1973} which implies the existence of a polynomial algorithm for constructing a set of $k$ arc-disjoint branchings from a given root when (\ref{kbranchcond}) is satisfied. 

A natural related problem is to ask for a characterization of  digraphs having an out-branching and an in-branching which are arc-disjoint. Such pair will be called \textbf{a good pair} in this paper and more precisely we call it a \textbf{good $(u,v)$-pair} if the roots $u$ and $v$ are specified. Thomassen showed (see \cite{bangJCT51} and \cite{bangJGT100}) that for general digraphs it is NP-complete to decide if a given digraph has a good pair. This makes it interesting to study classes of digraphs for which we can find a good pair or decide that none exists in polynomial time.
Such studies have been made in e.g. \cite{bangJCT51,bangJGT100,bangJGT20b,bangJGT42,bangJGTsub,bangC24,bercziIPL109,gutinDM343}. In particular it was shown in \cite{bangJGTsub} that there is a polynomial algorithm for deciding whether a given semicomplete digraph $D$ has a good $(u,v)$-pair for specified vertices $u,v$ of $D$.
In fact the following surprisingly simple characterization holds. We use $P_{x,y}$ to denote a path from $x$ to $y$ for two vertices $x,y$ of $D$. Such a path is also called an $(x,y)$-path. 

\begin{figure}[H]
\centering
\subfigure{\begin{minipage}[t]{0.15\linewidth}
\centering\begin{tikzpicture}[scale=0.8]
		\filldraw[black](0,9) circle (3pt)node[label=left:$u$](u){};
		\filldraw[black](0,8) circle (3pt)node[label=left:$v$](v){};
		\path[draw, line width=0.8pt] (u) edge (v);
	\end{tikzpicture}\caption*{(a)}\end{minipage}}
\subfigure{\begin{minipage}[t]{0.15\linewidth}
\centering\begin{tikzpicture}[scale=0.8]
		\filldraw[black](0,10) circle (3pt)node[label=left:$u$](u){};
		\filldraw[black](0,9) circle (3pt)node[](w){};
		\filldraw[black](0,8) circle (3pt)node[label=left:$v$](v){};
			
		\path[draw, line width=0.8pt] (u) edge[bend left=30] (v);
		\path[draw, line width=0.8pt] (u) edge (w) edge (v);
		\end{tikzpicture}\caption*{(b)}\end{minipage}}
	\subfigure{\begin{minipage}[t]{0.15\linewidth}
			\centering\begin{tikzpicture}[scale=0.8]
		\filldraw[black](0,10) circle (3pt)node[label=left:$u$](u){};
		\filldraw[black](0,9) circle (3pt)node[](w){};
		\filldraw[black](0,8) circle (3pt)node[label=left:$v$](v){};
		
		\path[draw, line width=0.8pt] (u) edge[bend left=30] (v);
		\path[draw, line width=0.8pt]  (v) edge[bend left=30] (u);
		\path[draw, line width=0.8pt] (u) edge (w) edge (v);
	\end{tikzpicture}\caption*{(c)}\end{minipage}}
\subfigure{\begin{minipage}[t]{0.15\linewidth}
		\centering\begin{tikzpicture}[scale=0.8]
		\filldraw[black](0,10) circle (3pt)node[label=left:$u$](a){};
		\filldraw[black](0,9) circle (3pt)node[](b){};
		\filldraw[black](0,8) circle (3pt)node[](c){};
		\filldraw[black](0,7) circle (3pt)node[label=left:$v$](d){};
			
		\path[draw, line width=0.8pt] (a) edge[bend left=30] (c);
		\path[draw, line width=0.8pt] (b) edge[bend left=30] (d);
		\path[draw, line width=0.8pt] (d) edge[bend right=50] (a);
		\path[draw, line width=0.8pt] (a) edge (b) edge (c) edge (d);
		\end{tikzpicture}\caption*{(d)}\end{minipage}}
\subfigure{\begin{minipage}[t]{0.15\linewidth}
	\centering\begin{tikzpicture}[scale=0.8]
		\filldraw[black](0,10) circle (3pt)node[label=left:$u$](a){};
	    \filldraw[black](0,9) circle (3pt)node[](b){};
		\filldraw[black](0,8) circle (3pt)node[](c){};
		\filldraw[black](0,7) circle (3pt)node[label=left:$v$](d){};
					
		\path[draw, line width=0.8pt] (a) edge[bend left=30] (c);
		\path[draw, line width=0.8pt] (b) edge[bend left=30] (d);
		\path[draw, line width=0.8pt] (d) edge[bend left=30] (b);
		\path[draw, line width=0.8pt] (d) edge[bend right=50] (a);
		\path[draw, line width=0.8pt] (a) edge (b) edge (c) edge (d);
		\end{tikzpicture}\caption*{(e)}\end{minipage}}
\subfigure{\begin{minipage}[t]{0.15\linewidth}
		\centering\begin{tikzpicture}[scale=0.8]
		\filldraw[black](0,10) circle (3pt)node[label=left:$u$](a){};
		\filldraw[black](0,9) circle (3pt)node[](b){};
		\filldraw[black](0,8) circle (3pt)node[](c){};
		\filldraw[black](0,7) circle (3pt)node[label=left:$v$](d){};
		
		\path[draw, line width=0.8pt] (a) edge[bend left=30] (c);
		\path[draw, line width=0.8pt] (b) edge[bend left=30] (d);
		\path[draw, line width=0.8pt] (c) edge[bend left=30] (a);
		\path[draw, line width=0.8pt] (d) edge[bend right=50] (a);
		\path[draw, line width=0.8pt] (a) edge (b) edge (c) edge (d);
		\end{tikzpicture}\caption*{(f)}\end{minipage}}
\caption{Small semicomplete digraphs without good $(u,v)$-pairs.}
\label{fig-SDexceptions}
\end{figure}

\begin{thm}\cite{bangJGTsub}
  \label{thm:SDbranchchar}
  Let $D=(V,A)$ be a semicomplete digraph  with $u,v\in V$ (possibly $u=v$). Then $D$ has a good $(u,v)$-pair if and only if it satisfies (i) and (ii) below.
  \begin{itemize}
  \item[(i)] For every choice of $z,w\in V$ there are arc-disjoint paths $P_{u,z},P_{w,v}$ in $D$.
  \item[(ii)] $D$ is not one of the digraphs in Figure  \ref{fig-SDexceptions}(b)-(f).
    \end{itemize}
  \end{thm}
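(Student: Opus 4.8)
The plan is to prove the two directions separately, with the sufficiency direction carrying essentially all of the work.

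For \emph{necessity}, suppose $(B_u^+,B_v^-)$ is a good $(u,v)$-pair. For (i), observe that in the out-branching $B_u^+$ every vertex $z$ is joined to the root by a unique $(u,z)$-path, and in the in-branching $B_v^-$ every vertex $w$ is joined to the root by a unique $(w,v)$-path; since $B_u^+$ and $B_v^-$ have no arc in common, these two paths are automatically arc-disjoint, which is precisely (i). For (ii), I would verify directly that none of the digraphs in Figure~\ref{fig-SDexceptions}(b)--(f) admits a good $(u,v)$-pair: each has at most four vertices, so a short exhaustive check over the few candidate out-branchings rooted at $u$ (and the in-branchings left in the complementary arc set) settles it. Combining the two parts gives that a good pair forces both (i) and (ii).

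For \emph{sufficiency}, assume (i) and (ii); I want to build a good pair, and I would argue by induction on $|V|$ organised around the strong-component structure. First I would record what (i) forces: the mere existence of $P_{u,z}$ for all $z$ shows every vertex is reachable from $u$, and the existence of $P_{w,v}$ for all $w$ shows $v$ is reachable from every vertex. Hence, in the unique acyclic ordering $S_1,\dots,S_p$ of the strong components of the semicomplete digraph $D$ — in which all arcs between distinct components go forward — we must have $u\in S_1$ and $v\in S_p$. Applying (i) to a pair $z\in S_p$, $w\in S_1$ further forces two arc-disjoint routings that both traverse the entire spine $S_1\to\cdots\to S_p$, so the connectivity along the spine can only drop to a single arc at singleton components; these are exactly the places where the construction becomes delicate.

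The heart of the argument is a case split on the arc-strong connectivity of $D$. In the generic case I would prove, as an auxiliary lemma, that every $2$-arc-strong semicomplete digraph has a good $(u,v)$-pair for all choices of $u,v$ (here $2$-arc-strong forces $p=1$); this should follow from Edmonds' Branching Theorem (Theorem~\ref{edmonds1973}) after a splitting/rerouting step that supplies the required in-degree condition in the residual digraph. Otherwise $D$ has a one-arc cut, equivalently a singleton bottleneck of the spine. Then I would peel off an initial or terminal piece of the component order at a bottleneck arc $e$, apply the induction hypothesis to the two resulting semicomplete subdigraphs (after checking that each inherits condition~(i) for suitably chosen roots), and glue the four sub-branchings across $e$, assigning $e$ to whichever of $B_u^+,B_v^-$ genuinely needs it while routing the other branching through the interior of a non-singleton component.

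The main obstacle — and where the exceptional list enters — is this reassembly at low connectivity: when the pieces are too small (a single vertex, or a single arc as in Figure~\ref{fig-SDexceptions}(a)), the induction has no room and the glue step can be forced to reuse the bottleneck arc for \emph{both} branchings, and precisely these degenerate configurations are what produce the digraphs of Figure~\ref{fig-SDexceptions}(b)--(f). Thus the bulk of the effort is a careful enumeration of the base cases and of the small spine patterns, showing that whenever $D$ satisfies (i) and avoids these finitely many exceptions the gluing always succeeds. I expect the two hardest points to be establishing the $2$-arc-strong lemma and pinning down \emph{exactly} the minimal obstructions, rather than the structural reduction itself.
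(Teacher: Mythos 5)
A framing point first: the paper you were given never actually proves Theorem \ref{thm:SDbranchchar}; it is imported wholesale from \cite{bangJGTsub}, and what the paper does quote from that source (Theorem \ref{SDgoodpair} and Lemma \ref{noGP}) indicates the machinery the real proof rests on. Your necessity direction is correct, and is exactly the easy remark made after the statement of the theorem. The serious gap is in your sufficiency argument, at the step you downgrade to an ``auxiliary lemma'': that every $2$-arc-strong semicomplete digraph has a good $(u,v)$-pair for every choice of $u,v$. This is Theorem \ref{2ArcS-SD} of the paper, a theorem of Bang-Jensen and Yeo \cite{bangC24} which the paper imports as a known result; its proof is a substantial combinatorial argument tied to strong arc decompositions, not a consequence of Edmonds' Branching Theorem. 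Moreover, your proposed route cannot be completed as described: a ``splitting/rerouting step that supplies the required in-degree condition in the residual digraph'' is an argument that sees only cut conditions, but by \cite{bangJGT100} there exist $2$-arc-strong digraphs of independence number $2$ with no good $(u,v)$-pair for some choices of $u,v$, and by Thomassen's NP-completeness result quoted in Section \ref{intro} no cut-type condition can characterize good pairs in general digraphs. So semicompleteness must enter this step in an essential, nontrivial way, and nothing in your sketch indicates where or how it does.

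The second gap is structural: your peel-and-glue induction is set up with a hypothesis that is both too weak and not inherited by the pieces. Concretely, take $D$ on vertices $\{u,w,v,x\}$ where $\{u,w,v\}$ induces the transitive tournament with arcs $uw,wv,uv$ (exception (b) of Figure \ref{fig-SDexceptions}), $x$ is dominated by $u,w,v$, and $x\to u$ is the unique arc back. Then $(\{u,w,v\},\{x\})$ is a one-arc cut, the piece $D\langle\{u,w,v\}\rangle$ is an exception and has no good $(u,v)$-pair, and the other piece is a singleton with no ``interior'' to route through; nevertheless $D$ itself has a good $(u,v)$-pair, namely $B^+_u=\{uw,ux,wv\}$ and $B^-_v=\{wx,xu,uv\}$, which uses the cross-cut arcs $wx,xu$ in a way your glue recipe never produces. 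So the induction cannot simply recurse on ``the pieces have good pairs'': it gets stuck exactly when a piece is exceptional, and this is not a degenerate corner case but the typical situation near the exceptional digraphs. This is why the actual development (reflected here in Lemma \ref{noGP} and Definition \ref{typeAB}) works with the strictly stronger notion of branchings sharing only a prescribed set of backward arcs (``almost good pairs'') and with the explicit type A/B chain structure, rather than with a bare induction on the existence of good pairs; any correct write-up along your lines would have to formulate and prove such a strengthened induction hypothesis, which your outline does not do.
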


  It is easy to see that (i) must hold if $D$ has a good $(u,v)$-pair
  and when $D$ has at least 5 vertices the theorem says that (i) is also sufficient. It was shown in \cite{bangJCT51} that one can decide the existence of arc-disjoint paths $P_{u,z},P_{w,v}$ for given (not necessarily distinct) vertices $u,v,w,z$ in a semicomplete digraph in polynomial time. Hence one can check condition (i) above in polynomial time. This combined with the proof of Theorem \ref{thm:SDbranchchar} in \cite{bangJGTsub} implies the following.

  \begin{thm}\cite{bangJGTsub}\label{thm:polalgSD}
    There exists a polynomial algorithm which given a semicomplete digraph $D=(V,A)$ and two (not necessarily distinct) vertices of $D$ such that $D$ is not one of the digraphs in Figure  \ref{fig-SDexceptions}(b)-(f), either outputs a good $(u,v)$-pair of $D$ or vertices  $z,w\in V$ such that  $D$ has no pair of arc-disjoint paths $P_{u,z},P_{w,v}$.
    \end{thm}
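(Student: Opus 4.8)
The plan is to read Theorem~\ref{thm:polalgSD} as the certifying, constructive counterpart of the characterization in Theorem~\ref{thm:SDbranchchar}. Since the input digraph $D$ is assumed not to be any of the exceptions in Figure~\ref{fig-SDexceptions}(b)--(f), condition (ii) holds automatically, so by Theorem~\ref{thm:SDbranchchar} the digraph $D$ has a good $(u,v)$-pair if and only if condition (i) holds. The algorithm therefore has only to decide (i) and, depending on the outcome, either return the promised certificate $z,w$ or actually build a good pair. Thus the whole task splits into (a) an efficient test for (i) that simultaneously produces a violating pair $z,w$ when (i) fails, and (b) a polynomial procedure that turns a ``yes'' answer into an explicit pair $B^+_u,B^-_v$.

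For part (a) I would simply iterate over all ordered pairs $(z,w)\in V\times V$ --- there are $O(n^2)$ of them --- and for each pair invoke the polynomial algorithm of \cite{bangJCT51}, which decides in a semicomplete digraph whether there exist arc-disjoint paths $P_{u,z}$ and $P_{w,v}$. If some pair $(z,w)$ admits no such arc-disjoint paths, the algorithm halts and outputs this $(z,w)$; by the necessity direction of Theorem~\ref{thm:SDbranchchar} (equivalently, the trivial observation that a good pair would itself yield such arc-disjoint paths) this is a valid certificate that no good $(u,v)$-pair exists. Running a polynomial subroutine $O(n^2)$ times is again polynomial, so this phase is efficient and self-certifying.

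For part (b), suppose every pair $(z,w)$ passed the test, so (i) holds; together with (ii) this guarantees a good pair by Theorem~\ref{thm:SDbranchchar}. Here I would not re-prove existence but instead make the sufficiency argument of \cite{bangJGTsub} constructive, replacing each existential ingredient in that proof by the corresponding polynomial subroutine. Concretely, wherever the proof appeals to Edmonds' Branching Theorem to extract branchings from a degree condition, I would use Lov\'asz's constructive proof (noted after Theorem~\ref{edmonds1973}) to actually build them; wherever it uses arc-disjoint connecting paths, I would use the algorithm of \cite{bangJCT51}; and the finitely many small base configurations can be resolved by table lookup. Assembling these pieces following the case structure of the original proof then yields the branchings $B^+_u$ and $B^-_v$ in polynomial time.

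The main obstacle I anticipate is precisely in part (b): verifying that the proof of Theorem~\ref{thm:SDbranchchar} is constructive in \emph{every} branch. Such characterization proofs for semicomplete digraphs typically proceed by a case analysis on the arc-strong connectivity of $D$, on whether $u=v$, and on the behaviour of a few distinguished vertices, and within some cases the construction proceeds by local modification (arc swaps) of a branching obtained elsewhere. The delicate point is to confirm that none of these steps hides a non-algorithmic choice and that each modification is both polynomial and preserves arc-disjointness; once each case is checked the overall polynomial bound follows by summing the constantly many phases.
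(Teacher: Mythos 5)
Your proposal matches the paper's own justification essentially verbatim: the paper also derives Theorem~\ref{thm:polalgSD} by testing condition (i) of Theorem~\ref{thm:SDbranchchar} with the polynomial arc-disjoint-paths algorithm of \cite{bangJCT51} over the pairs $(z,w)$, which yields the certificate when (i) fails, and otherwise invoking the (constructive) proof of Theorem~\ref{thm:SDbranchchar} in \cite{bangJGTsub} to produce the branchings. Your extra care in part (b) about verifying constructivity case by case is exactly the content the paper delegates to the cited proof, so there is no substantive difference in approach.
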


    A digraph $D=(V,A)$ is {\bf quasi-transitive} if the presence of the arcs $uv,vw$ implies that there is an arc between $u$ and $w$. Bang-Jensen and Huang obtained the following result on $(u,u)$-pairs in quasi-transitive digraphs.

    \begin{thm}\cite{bangJGT20b} \label{thm:qtduupair}
      \begin{itemize}
        
  \item[(1)] Every 2-arc-strong quasi-transitive digraph $D=(V,A)$ has a good $(u,u)$-pair  for every choice of $u\in V$.
  \item[(2)] There exists a polynomial algorithm for deciding whether a given quasi-transitive digraph $D=(V,A)$ has a good $(u,u)$-pair  for a given vertex $u\in V$.
    \end{itemize}
\end{thm}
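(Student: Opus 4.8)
The plan is to reduce both statements to facts about semicomplete digraphs through the canonical decomposition of quasi-transitive digraphs. Since a good $(u,u)$-pair requires simultaneously an out-branching and an in-branching rooted at $u$, the digraph $D$ must be strong, so throughout I may assume $D$ is strong. A strong quasi-transitive digraph on at least two vertices can be written as a semicomplete composition $D = S[Q_1,\dots,Q_t]$, where $S$ is a strong semicomplete digraph on $t\ge 2$ vertices and each $Q_i$ is either a single vertex or a non-strong quasi-transitive digraph; this decomposition is computable in polynomial time. Fix such a decomposition and assume the root $u$ lies in $Q_1$. The essential feature I would exploit is that whenever $s_is_j$ is an arc of $S$, every arc from $Q_i$ to $Q_j$ is present in $D$, so the between-block arcs are abundant and the only scarce arcs are internal to the blocks.

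For part (1) I would build the pair directly. Being strong and semicomplete, $S$ has a Hamiltonian cycle, which after relabelling I take to be $s_1s_2\cdots s_ts_1$. The complete bipartite connections between consecutive blocks then let me route the out-branching $B^+_u$ forward from $Q_1$ so that it reaches every block, and route the in-branching $B^-_u$ so that every block sends a path forward around the cycle back to $Q_1$, with the two branchings using disjoint arcs between each consecutive pair of blocks. Edmonds' Branching Theorem (Theorem \ref{edmonds1973}), applied to $D$ and to its converse, supplies the global arc-connectivity needed to realise both simultaneously. The role of the hypothesis is local: $2$-arc-strength guarantees that between any two consecutive blocks, and inside each block, there are enough arcs that the entry arcs used by $B^+_u$ and the exit arcs used by $B^-_u$ can be chosen disjointly while still covering every vertex. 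I expect the main obstacle here to be the careful blockwise bookkeeping that certifies arc-disjointness and full coverage, especially for runs of single-vertex blocks, where the between-block arc supply is smallest, and for the small base cases of $S$ such as a single digon.

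For part (2) the algorithm first tests strongness (necessary) and then $2$-arc-strength; if $D$ is $2$-arc-strong, part (1) returns ``yes'' together with an explicit pair, and every step above is polynomial. The genuinely new case is $D$ strong but not $2$-arc-strong. Here $D$ has arcs $e$ with $D-e$ not strong, and because all arcs between two adjacent blocks are present, the corresponding sets $X$ with $d^+(X)=1$ (or $d^-(X)=1$) are strongly constrained and can be located in polynomial time. For such an $X$, its unique boundary arc is forced to be used by exactly one of the two branchings, which decouples the problem across the cut into strictly smaller quasi-transitive instances, each inheriting a prescribed root from the forced arc; I would solve these recursively and glue the partial solutions, using the semicomplete algorithm of Theorem \ref{thm:polalgSD} on the quotient $S$ to organise the gluing and to detect the irreparable configurations.

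The step I expect to be hardest is exactly this analysis of the $1$-arc-cuts: one must show that the forced boundary arcs never conflict, that the smaller instances are again quasi-transitive with the correct roots, and that the recursion terminates at the small semicomplete exceptions of Figure \ref{fig-SDexceptions} rather than generating new exceptional families. Establishing that the handful of unavoidable obstructions in the non-$2$-arc-strong regime is complete, so that ``no good pair'' is correctly certified, is where the bulk of the case analysis, and the main risk of a gap, will lie.
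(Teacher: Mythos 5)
Note first that the paper does not prove Theorem \ref{thm:qtduupair} at all: it is quoted from \cite{bangJGT20b}, so your attempt can only be judged on whether it would stand on its own. It would not; there are genuine gaps in both parts.

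For part (1), your Hamiltonian-cycle routing collapses exactly in the case that carries all the difficulty. If two consecutive blocks $Q_i,Q_{i+1}$ are both single vertices there is only \emph{one} arc between them, so $B^+_u$ and $B^-_u$ cannot use ``disjoint arcs between each consecutive pair of blocks''; and in the extreme case where every block is a singleton, $D$ is simply a $2$-arc-strong semicomplete digraph (e.g.\ a tournament), for which your scheme says nothing. That case is itself a nontrivial theorem --- in the paper's notation Theorem \ref{sABC} (the Bang-Jensen--Huang characterization) or Theorem \ref{2ArcS-SD} --- which every known proof of (1) uses and which you neither cite nor reprove. Your appeal to Edmonds' Branching Theorem cannot fill this hole: Edmonds gives $k$ arc-disjoint \emph{out}-branchings from a common root, and says nothing about an out-branching arc-disjoint from an in-branching; indeed, by Thomassen's result quoted in Section \ref{intro}, that mixed problem is NP-complete in general, so no generic connectivity argument can substitute for the semicomplete case analysis. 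The exceptions in Theorem \ref{QTarcdecom} ($S_4$, $C_3[\overline{K}_2,\overline{K}_2,\overline{K}_2]$, etc.) are further warnings that ``abundant between-block arcs'' does not make such constructions generic.

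For part (2), the recursion over $1$-arc-cuts is circular in difficulty. If $X\subseteq V-u$ has $d^-(X)=1$ and $d^+(X)=1$, then in any good $(u,u)$-pair the out-branching restricted to $X$ is a spanning out-tree of $D\left\langle X\right\rangle$ rooted at the head $y$ of the unique entering arc, and the in-branching restricted to $X$ is a spanning in-tree rooted at the tail $x$ of the unique leaving arc; so your subproblem is a good $(y,x)$-pair with, in general, $y\neq x$. Deciding the existence of good pairs with \emph{distinct} prescribed roots in quasi-transitive digraphs is precisely Conjecture \ref{conj:QTDpol}, open from 1998 until the present paper (Theorems \ref{mainthm} and \ref{thm:polalgSDcompbr}); your algorithm therefore presupposes a subroutine strictly stronger than the theorem you are proving, and far stronger than anything available in 1995. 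The actual route in \cite{bangJGT20b} stays entirely inside the same-root world: it uses the decomposition of Theorem \ref{QTchara} to reduce the $(u,u)$-question to the semicomplete characterization of Theorem \ref{sABC} (compare also the Gutin--Sun reduction, Lemma \ref{QvsQprime}), which is checkable in polynomial time. Finally, the two claims you flag as risky --- that the forced cut arcs never conflict and that the list of obstructions is complete --- are exactly the content of the theorem and are nowhere argued.
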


A bit later Bang-Jensen and Gutin conjectured the following.

\begin{conj}\cite{bangJGT28}
  \label{conj:QTDpol}
  There exists a polynomial algorithm for deciding for a  given quasi-transitive digraph $D=(V,A)$ and two vertices $u,v$ of $V$ whether $D$ has a good $(u,v)$-pair.
\end{conj}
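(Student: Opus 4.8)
The plan is to prove a stronger statement --- a polynomial algorithm for every semicomplete composition --- and then deduce the quasi-transitive case from the recursive structure of quasi-transitive digraphs. Recall the Bang-Jensen and Huang decomposition: a non-strong quasi-transitive digraph $D$ can be written as $D=T[H_1,\dots,H_t]$ for a transitive (hence acyclic) oriented graph $T$ and strong quasi-transitive digraphs $H_i$, while a strong quasi-transitive digraph $D$ can be written as $D=S[Q_1,\dots,Q_s]$ for a strong semicomplete digraph $S$ and quasi-transitive digraphs $Q_i$ that are single vertices or non-strong. The strong case is thus literally a semicomplete composition, so once the composition problem is solved, only the non-strong (transitive-quotient) case needs separate attention, and I will argue it is handled by the same machinery.

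First I would dispose of the easy necessary conditions. A good $(u,v)$-pair spans $D$, so every vertex must be reachable from $u$ and must reach $v$; both conditions are checkable by two graph searches, and if either fails we output that no good pair exists. After this reduction every vertex $w$ satisfies $u\rightsquigarrow w\rightsquigarrow v$, which in the condensation forces the strong component of $u$ to be the unique initial component and that of $v$ the unique terminal one.

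The heart of the argument is to solve the good-pair problem on a composition $D=S[H_1,\dots,H_n]$ by pushing it down to the quotient $S$. The key leverage is that whenever two blocks $H_i,H_j$ are joined in $S$ there are $|H_i|\cdot|H_j|$ parallel crossing arcs, so the binding constraints live on $S$ (which is small and semicomplete) rather than inside the blocks. Concretely, I would (a) use Theorem \ref{thm:polalgSD} on $S$ --- with the two marked vertices being the blocks containing $u$ and $v$ --- to obtain an arc-disjoint out-branching/in-branching pair at the quotient level, or the certificate $z,w$ that none exists; (b) translate each quotient arc into one of the available crossing arcs; and (c) complete the two branchings inside each block by invoking Edmonds' Branching Theorem (Theorem \ref{edmonds1973}): every block must absorb out-branching material and emit in-branching material, and the abundance of crossing arcs guarantees the in-degree/out-degree cut conditions (\ref{kbranchcond}) needed to root arc-disjoint spanning sub-branchings inside each $H_i$. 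Assembling these pieces yields a genuine good $(u,v)$-pair of $D$.

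The main obstacle is the boundary between ``enough room'' and ``too tight'': when blocks have size $1$ there is no internal slack, and the quotient may be one of the small exceptional semicomplete digraphs of Figure \ref{fig-SDexceptions}(b)-(f) for which Theorem \ref{thm:SDbranchchar} fails even though condition (i) holds. I expect the real work to be a finite case analysis isolating exactly which combinations of small quotient and singleton blocks obstruct a good pair, and verifying that in all other cases the constructive scheme above succeeds. For the non-strong quasi-transitive case the quotient $T$ is acyclic rather than semicomplete; here the good-pair theory of the quotient is strictly simpler (an acyclic quotient with unique source $u$-block and unique sink $v$-block forces a layered flow), so the same ``decide on the quotient, then apply Edmonds inside the blocks'' scheme applies, with the exceptional configurations even more restricted. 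Finally, because the decomposition is recursive (strong blocks may contain non-strong blocks and conversely), I would argue termination and global arc-disjointness by induction on the decomposition tree, each level contributing only a polynomial amount of work.
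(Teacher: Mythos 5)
There is a genuine gap at the heart of your scheme. Your plan is to decide the problem on the quotient $S$ via Theorem \ref{thm:polalgSD} and then lift, treating a negative answer on $S$ as conclusive modulo ``a finite case analysis'' of small quotients and singleton blocks. But a negative answer on the quotient is \emph{not} a certificate for the composition: because adjacent blocks are joined by $|H_i|\cdot|H_j|$ parallel crossing arcs, the out-branching and the in-branching of $Q$ can cross a bottleneck arc $xy$ of $S$ through \emph{distinct} arcs $x_1y$ and $x_2y$ with $x_1,x_2\in H(x)$, so $Q$ may have a good $(u,v)$-pair even though $S$ has none. This is exactly the situation the paper's technical core is built to handle: Lemma \ref{noGP} shows that when $S$ has no good $(u_S,v_S)$-pair (and is not a small exception), $(S,u_S,v_S)$ is of type A or type B and $S$ still admits branchings sharing \emph{only} the backward arc(s); the long case analysis of Section \ref{sec:proof} then determines precisely when those shared arcs can be split using parallel copies inside the composition, yielding conditions (ii) and (iii) of Theorem \ref{mainthm}. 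Your proposal has no counterpart to this step, and without it the algorithm would wrongly answer ``no'' on infinitely many compositions whose quotient is, say, of type A but where some backward-arc block has size at least $2$ and the right degrees.

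Two further problems compound this. First, the obstruction set is not finite: Theorem \ref{mainthm} (ii)--(iii) involves type A/type B partitions of unbounded length together with degree conditions in $Q$ at every backward arc, and Table \ref{exceptions} itself lists infinite families (e.g.\ $TT_3[u,\overline{K}_{n-2},v]$ and the digraphs (d)--(f) with arbitrary $H$), so no finite case analysis can close the gap. Second, your reduction ignores the case where $u$ and $v$ lie in the same block $H_i$: then the quotient sees only one marked vertex, the relevant quotient question is about a $(u_S,u_S)$-pair, and lifting to distinct roots $u\neq v$ requires the hypotheses $d_Q^+(u)\geq 2$, $d_Q^-(v)\geq 2$ and separate constructions (the paper's Lemmas \ref{uneqvsameQ} and \ref{uneqvsameQ1}). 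A minor point in the same direction: Edmonds' theorem (Theorem \ref{edmonds1973}) concerns several branchings with a \emph{common} root and is not the tool needed inside blocks; when $S$ does have a good pair, no such machinery is needed at all, since block vertices can simply be attached as leaves via crossing arcs (Lemma \ref{goodinD}), and when $S$ has no good pair, Edmonds' theorem does not address the difficulty.
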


Let $D$ be a digraph with vertex set $\{v_i: i\in[n]\}$, and let $H_1,\ldots,H_n$ be digraphs which are pairwise vertex-disjoint. The \textbf{composition} $D[H_1,\ldots,H_n]$ is the digraph $Q$ with vertex set $V(H_1)\cup\cdots\cup V(H_n)$ and arc set $(\bigcup_{i=1}^nA(H_i))\cup\{h_ih_j:h_i\in V(H_i),h_j\in V(H_j), v_iv_j\in A(D)\}$. We say that a composition\\
$Q=D[H_1,\ldots,H_n]$ is a \textbf{semicomplete composition} (resp., a {\bf transitive composition}) if $D$ is semicomplete (resp., transitive).
Bang-Jensen and Huang proved the following recursive characterization of quasi-transitive digraphs.

\begin{thm}\label{QTchara}\cite{bangJGT20b}
	Let $Q$ be a quasi-transitive digraph. 
	
	(i) If $Q$ is strong, then there exists a strong semicomplete digraph $S$ with $s$ vertices and quasi-transitive digraphs $Q_1,Q_2,\ldots,Q_s$ such that each $Q_i$ is either a vertex or is non-strong and $Q=S[Q_1,Q_2,\ldots,Q_s]$.
	
	(ii) If $Q$ is not strong, then there exists a transitive oriented graph $T$ with $t$ vertices and strong quasi-transitive digraphs $Q_1,Q_2,\ldots,Q_t$ such that $Q=T[Q_1,Q_2,\ldots,Q_t]$.
      \end{thm}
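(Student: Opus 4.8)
The plan is to split along strong connectivity exactly as the statement does, building each composition explicitly and isolating one structural lemma about quasi-transitivity.

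For part (ii) I would take $Q_1,\dots,Q_t$ to be the strong components of $Q$ and let $T$ be the condensation. Each $Q_i$ is then strong by definition and quasi-transitive as an induced subdigraph, so the only things left to verify are that $Q=T[Q_1,\dots,Q_t]$ and that $T$ is a transitive oriented graph. The heart is the claim that if a single arc runs from $Q_i$ to $Q_j$ with $i\neq j$, then every vertex of $Q_i$ dominates every vertex of $Q_j$ and no arc returns. I would prove this by propagating one arc $xy$ (with $x\in Q_i$, $y\in Q_j$) along internal paths: given $x'\to\cdots\to x$ inside $Q_i$, quasi-transitivity applied to the last two arcs together with $x\to y$ forces an arc between the predecessor of $x$ and $y$, and it cannot point into $Q_i$ (that would merge $y$ into the component of $x$), so it points to $y$; iterating reaches $x'$, and the symmetric argument along an internal path of $Q_j$ finishes completeness and one-directedness. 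That $T$ has no $2$-cycle is immediate from maximality of strong components, and transitivity of $T$ follows from one further application of quasi-transitivity to representatives of three components $Q_i\to Q_j\to Q_k$. Hence $Q=T[Q_1,\dots,Q_t]$ with $T$ transitive, as required.

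For part (i), if $Q$ is semicomplete I simply take $S=Q$ with every block a single vertex. Otherwise $Q$ has a non-adjacent pair, and I would partition $V(Q)$ into the connected components of its \emph{non-adjacency graph} $\bar G$ (vertices joined when non-adjacent in $Q$). I claim each such component $M$ is a module: any vertex $w\notin M$ is adjacent in $Q$ to all of $M$ (otherwise $w\in M$), and quasi-transitivity forces this adjacency to have a uniform direction, since a mixed pattern $m'\to w$ and $w\to m$ with $m,m'$ non-adjacent would yield $m'\to w\to m$ and hence make $m,m'$ adjacent. Consequently distinct components are completely adjacent in a single direction, so the quotient $S=Q/P$ over this partition is semicomplete, and it is strong because $Q$ is. Each block is a single vertex or contains a non-adjacent pair; in the latter case, if the block induces a non-strong digraph I keep it as one of the $Q_i$, while if it induces a strong digraph I apply part (i) to it recursively and then flatten, using that substituting a strong semicomplete digraph into one vertex of a strong semicomplete digraph again yields a strong semicomplete digraph, so the two quotients merge into a single semicomplete $S$. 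Termination is by vertex count.

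The whole of part (i) hinges on the partition $P$ being nontrivial, i.e. on every non-adjacency component being a \emph{proper} subset of $V(Q)$, and this is the main obstacle. I would deduce it from the lemma that in a \textbf{strong} quasi-transitive digraph non-adjacency is a transitive relation, so $\bar G$ is a disjoint union of cliques: if some component were all of $V(Q)$ it would be a single clique, meaning $Q$ has no arcs at all, contradicting strongness for $|V(Q)|\ge 2$. This lemma is exactly where both hypotheses are used. Quasi-transitivity already gives the local obstruction — for non-adjacent $p,q$ there is no vertex $w$ with $p\to w\to q$, so the shortest $(p,q)$-path has length at least three — and I would combine this with strong connectivity to exclude an induced path $p-q-r$ of $\bar G$ (that is, $p,q$ and $q,r$ non-adjacent but $p\to r$), by showing that such a configuration forces every putative connection between $\{p,r\}$ and the rest to violate quasi-transitivity, thereby disconnecting $Q$. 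Establishing this transitivity lemma cleanly — most plausibly through a shortest-path analysis of the kind underlying the theory of quasi-transitive digraphs — is the step I expect to demand the most care.
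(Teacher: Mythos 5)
Your part (ii) is essentially correct and is the standard argument: take the strong components and their condensation, propagate a single arc $x\to y$ between components along internal paths using quasi-transitivity, and rule out any return arc because it would merge the two components; transitivity and 2-cycle-freeness of the quotient follow as you describe. No complaints there.

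Part (i), however, rests on a false lemma. It is \emph{not} true that non-adjacency is a transitive relation in a strong quasi-transitive digraph, i.e.\ the non-adjacency graph $\overline{G}$ need not be a disjoint union of cliques. Take $Q=C_3[H,K_1,K_1]$, where $H$ has vertex set $\{a,b,c\}$ and the single arc $a\to b$; concretely $V(Q)=\{a,b,c,w_2,w_3\}$ with arcs $a\to b$, all arcs from $\{a,b,c\}$ to $w_2$, the arc $w_2\to w_3$, and all arcs from $w_3$ to $\{a,b,c\}$. One checks every directed $2$-path directly: $Q$ is quasi-transitive, and it is strong (each of $a,b,c$ lies on a triangle through $w_2,w_3$). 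Yet $a,c$ are non-adjacent and $c,b$ are non-adjacent while $a\to b$, so $\overline{G}$ contains exactly the induced path $p-q-r$ (with $p=a$, $q=c$, $r=b$) that your argument sets out to exclude; the exclusion cannot succeed. Consequently your derivation of the crucial nontriviality statement collapses: if a component of $\overline{G}$ were all of $V(Q)$, it need not be a clique, so you cannot conclude that $Q$ has no arcs. The conclusion you need is true --- this is precisely Bang-Jensen and Huang's key lemma that $\overline{G}$ is disconnected for every strong quasi-transitive digraph on at least two vertices (note your target theorem is a cited result in this paper, not one it proves) --- but its proof goes through a genuinely different and harder route: the shortest-path lemma for quasi-transitive digraphs (a shortest path between two non-adjacent vertices has exactly three arcs, and its vertex set induces a semicomplete digraph except for that missing end-pair), followed by a global argument, not through transitivity of non-adjacency. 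The remainder of your part (i) --- the uniform-direction modules, the strong semicomplete quotient, the recursion with flattening --- would be fine once that lemma is supplied, but as written the proposal has a genuine gap exactly at the step you yourself flagged as the hardest.
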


      By this theorem, strong  quasi-transitive  digraphs form a subclass of the class of semicomplete compositions and non-strong
      quasi-transitive digraphs form a subclass of the class of transitive compositions.
Hence the following result by Gutin and Sun extends Theorem \ref{thm:qtduupair}.

  \begin{thm}\cite{gutinDM343}
    \label{thm:GutinSun}
    There exists a polynomial algorithm for deciding whether a given semicomplete composition $D=(V,A)$ has a good $(u,u)$-pair for a given vertex $u\in V$.
  \end{thm}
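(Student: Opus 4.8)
The plan is to combine the already-solved semicomplete case (Theorems~\ref{thm:SDbranchchar} and \ref{thm:polalgSD}) with an arc-connectivity dichotomy, exploiting that between two parts joined by an arc of $S$ the composition carries the \emph{complete} bipartite set of arcs. Thus inter-part arcs are plentiful, and the only genuine bottlenecks come from singleton parts and from the connectivity of $S$ itself.

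First I would dispose of the necessary condition. A good $(u,u)$-pair needs both an out-branching and an in-branching rooted at $u$, so $u$ must reach and be reached by every vertex; hence $D$ must be strong. Because composition preserves the order of the condensation, for $n\ge 2$ the digraph $D=S[H_1,\dots,H_n]$ is strong if and only if $S$ is strong (the strongness of the individual $H_i$ being irrelevant, since any two vertices of the same part can be joined by routing through another part), while for $n=1$ the problem is just the problem for $H_1$. Strongness is testable in polynomial time, so from here on I assume $S$ is strong and $n\ge 2$, and I let $u\in V(H_p)$.

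The core is a dichotomy on arc-connectivity. The main positive lemma I would prove is: every $2$-arc-strong semicomplete composition has a good $(u,u)$-pair, mirroring Theorem~\ref{thm:qtduupair}(1) for quasi-transitive digraphs. The proof is constructive, and the point is \emph{not} to pass to a good pair of $S$ (note that $D$ can be $2$-arc-strong while $S$ is only $1$-arc-strong, e.g. $\protect\overrightarrow{C_3}$ blown up by parts of size two), but to build the two branchings directly in $D$. I would set up an auxiliary digraph on the parts, prove via Theorem~\ref{edmonds1973} (applied to it and to its reverse) that the inter-part skeletons reaching every part from $H_p$ and reaching $H_p$ from every part can be chosen arc-disjoint, and then assign to each non-root vertex of each $H_i$ one in-arc (for $B^+_u$) and one distinct out-arc (for $B^-_u$), taken from neighbouring parts when $H_i$ is small and from an internal branching of $H_i$ when $H_i$ is strong. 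The $2$-arc-strongness supplies exactly the surplus of crossing arcs needed to keep the two branchings disjoint; the only bookkeeping is routing the internal vertices of singleton-adjacent parts without collision.

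What remains, and what I expect to be the main obstacle, is the \emph{strong but only $1$-arc-strong} case, which mirrors the exceptional digraphs (b)--(f) of Figure~\ref{fig-SDexceptions} in the pure semicomplete setting. Here I would first show that a minimal one-arc-cut localizes: since any cut separating two full parts $H_i,H_j$ carries at least $|V(H_i)|\cdot|V(H_j)|$ arcs, a crossing of size one forces the relevant parts to be singletons, so the bottleneck is essentially a cut-arc of $S$ together with singleton parts. Along such a cut both branchings are constrained, and a good pair can fail only within a bounded, explicitly listable family of local configurations (the composition analogues of (b)--(f), obtained by blowing up at most a few vertices into singletons); outside this finite list a good pair always exists, reducing where possible to a good $(v_p,v_p)$-pair of $S$ via Theorems~\ref{thm:SDbranchchar} and \ref{thm:polalgSD} and patching the few singleton parts by hand. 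Assembling the algorithm is then routine: test strongness; compute the arc-connectivity and a minimum cut by max-flow; if $D$ is $2$-arc-strong output a good pair by the construction; otherwise check membership in the finite exceptional list and, if absent, build the pair from the lifted semicomplete solution. The delicate, error-prone step throughout is the exhaustive-but-finite case analysis of the $1$-arc-strong configurations, exactly as in the base semicomplete theorem.
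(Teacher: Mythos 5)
The paper itself does not prove Theorem \ref{thm:GutinSun}; it cites it from Gutin and Sun, and the in-paper machinery that subsumes it is Lemma \ref{2arcstrong} together with Theorem \ref{mainthm}. Measured against either route, your plan has a genuine gap in each half of your dichotomy.

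Your core positive lemma is correct as a statement: every 2-arc-strong semicomplete composition has a good $(u,u)$-pair (it is the $u=v$ instance of Lemma \ref{2arcstrong}, whose unique exception requires two distinct roots). But your proof sketch fails at the decisive point. Edmonds' Theorem \ref{edmonds1973}, applied to an auxiliary digraph and to its reverse, only yields arc-disjoint branchings of a \emph{single} orientation: $k$ out-branchings from one root, or $k$ in-branchings. It never certifies that an out-branching can be chosen arc-disjoint from an in-branching; that mixed disjointness is exactly the problem being solved, is NP-complete for general digraphs, and whether any fixed arc-strength forces it is a long-standing open question of Thomassen. The inter-part multiplicity you invoke does make collisions avoidable when the relevant parts are large, but in the extreme case where all parts are singletons your auxiliary digraph is $S$ itself, and the claim becomes Theorem \ref{2ArcS-SD} of Bang-Jensen and Yeo --- a genuinely hard theorem. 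The known proofs for compositions (Gutin and Sun's, and Lemma \ref{2arcstrong} here) instead invoke the strong arc decomposition Theorem \ref{SCarcdecom}, whose exceptions $S_4$, $C_3[\overline{K}_2,\overline{K}_2,\overline{K}_2]$, $C_3[\overline{K}_2,\overline{K}_2,\overline{K}_3]$, $C_3[\overline{K}_2,\overline{K}_2,P_2]$ show that no uniform ``one in-arc plus one distinct out-arc per vertex'' bookkeeping can succeed: a correct argument must detect these digraphs and treat them separately. This half of your plan should cite those theorems, not re-derive them by generic routing.

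In the strong but not 2-arc-strong case your plan rests on two claims that do not hold. First, the obstructions do not form a finite list. Already when every part is a singleton (so $Q=S$ is semicomplete), Theorem \ref{sABC} says that the digraphs with no good $(u,u)$-pair are exactly those admitting a partition $X,Y,Z$ of $V(S)-u$ with precisely one arc leaving the terminal component of $S\left\langle X\right\rangle$ and entering the initial component of $S\left\langle Y\right\rangle$, where $X,Y,Z$ may be arbitrarily large; blow-ups only enlarge this family. So ``check membership in the finite exceptional list'' is not an executable step: the algorithm must test a structural condition, which is what Gutin and Sun do and what condition (ii) of Theorem \ref{mainthm} does, with the additional part-size and degree conditions on the backward arc. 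Second, your positive construction here is to lift a good $(u_S,u_S)$-pair of $S$ and patch singleton parts, but the hard instances are precisely those where $S$ has \emph{no} good $(u_S,u_S)$-pair yet $Q$ does, e.g.\ when the backward arc $xy$ of the type-A partition has $|H(x)|\geq 2$ and some $w\in H(x)$ with $d_Q^+(w)\geq 2$. Then there is nothing to lift, and building the pair requires the ``branchings sharing one arc'' machinery of Lemmas \ref{uneqvsameQ1} and \ref{shareonearc} (or Gutin and Sun's neighbourhood reduction, Lemma \ref{QvsQprime}); your plan has no mechanism for this, and it is the bulk of the work. A smaller point: for $n=1$ you say the problem ``is just the problem for $H_1$'', but that is Thomassen's NP-complete problem, so the theorem implicitly requires $s\geq 2$ (as Theorem \ref{mainthm} does) and your algorithm must assume this too.
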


  In this paper we consider the existence of good $(u,v)$-pairs in compositions of strong semicomplete digraphs and transitive digraphs and give a complete classification of semicomplete and transitive compositions with no good $(u,v)$-pair for given vertices $u,v$. The classification for compositions of strong semicomplete digraphs, which can be found in Theorem \ref{mainthm} is too involved to be stated in this introduction. For transitive compositions, which includes all compositions of non-strong semicomplete digraphs (see Remark \ref{rem:nonstrongSD}), there is a simple classification which we give in Proposition \ref{Dtransitive}.
All our proofs are constructive and can be converted to polynomial algorithms.

  \begin{thm}
    \label{thm:polalgSDcompbr}
    There exists a polynomial algorithm which given a composition $D=S[H_1,\ldots{},H_s]$, where $S$ is either transitive or semicomplete,  and two vertices $u,v$, decides whether $D$ has a good $(u,v)$-pair and outputs such a pair when it exists.
  \end{thm}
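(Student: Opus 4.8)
The plan is to derive this algorithmic statement as a corollary of the two structural classifications established in this paper: Theorem~\ref{mainthm} for compositions $S[H_1,\ldots,H_s]$ with $S$ a strong semicomplete digraph, and Proposition~\ref{Dtransitive} for transitive compositions. Since a non-strong semicomplete digraph decomposes into an acyclic ordering of its strong components and the arcs between distinct components form a transitive pattern, Remark~\ref{rem:nonstrongSD} lets me treat every composition over a non-strong semicomplete $S$ as a transitive composition, leaving only two cases: $S$ strong semicomplete and $S$ transitive. Because both classifications are proved constructively, the task reduces to (a) checking the defining conditions of the relevant classification in polynomial time and (b) running the construction from the corresponding proof to output a good $(u,v)$-pair whenever these conditions hold.

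For the transitive case I would invoke Proposition~\ref{Dtransitive}: its hypothesis is a short list of connectivity-type conditions together with a finite family of forbidden small configurations, each testable in polynomial time by inspecting the acyclic component structure of $S$ and the sizes of the relevant $H_i$. When the conditions are met, the constructive proof supplies $B_u^+$ and $B_v^-$ directly; the only quantities to compute are local in- and out-branchings inside each $H_i$ where they are needed, plus a split of the inter-component arcs between the two branchings. Since $v_iv_j\in A(S)$ forces all $|V(H_i)|\cdot|V(H_j)|$ arcs from $H_i$ to $H_j$ to be present, the two branchings can almost always be routed through disjoint inter-component arcs, so arc-disjointness is automatic outside the explicitly excluded small cases.

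For the strong semicomplete case the algorithm follows Theorem~\ref{mainthm}. I would first locate $u$ and $v$ among the components, distinguish singleton from non-trivial $H_i$, and test the conditions of Theorem~\ref{mainthm}; at its core is a path condition in the spirit of condition~(i) of Theorem~\ref{thm:SDbranchchar}, checkable in polynomial time either directly or by invoking the routine of Theorem~\ref{thm:polalgSD} on $S$ or a suitable semicomplete digraph derived from $D$. When the conditions hold, the construction produces $B_u^+$ and $B_v^-$ by combining a quotient-level solution on the component set $\{v_1,\ldots,v_s\}$ with local in- and out-branchings inside the non-trivial $H_i$ and a careful allocation of the abundant inter-component arcs; Edmonds' Theorem~\ref{edmonds1973} together with Lov\'asz's constructive proof supplies, wherever needed, the several arc-disjoint out-branchings from a common root used to reach all components.

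The main obstacle, as is usual for such classifications, lies in the strong semicomplete case of Theorem~\ref{mainthm}: one must verify that the constructed out- and in-branchings are genuinely arc-disjoint in every non-exceptional configuration, which demands careful accounting of how many inter-component arcs each branching consumes between a given ordered pair of components and a guarantee that a surplus always remains. The delicate subcases arise precisely when several $H_i$ are singletons or when $S$ is one of the small semicomplete digraphs of Figure~\ref{fig-SDexceptions}, for then the extra routing freedom of the composition collapses and the problem degenerates to the already-settled semicomplete case of Theorem~\ref{thm:SDbranchchar}. Once the finite list of exceptions is shown to be exhaustive and the arc budgets are verified in all remaining cases, polynomial running time is immediate: the number of forbidden configurations is constant, every condition reduces to a connectivity or flow computation, and every branching is produced by a polynomial subroutine.
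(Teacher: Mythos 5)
Your proposal takes essentially the same route as the paper: Theorem \ref{thm:polalgSDcompbr} is obtained there exactly as you describe, by combining the constructive, polynomial-time-checkable classifications of Theorem \ref{mainthm} (strong semicomplete $S$) and Theorem \ref{Dtransitive} (transitive $S$, which covers compositions of non-strong semicomplete digraphs via Remark \ref{rem:nonstrongSD}), so the algorithmic statement is a direct corollary of those constructive proofs. One caveat: your claim that the core of Theorem \ref{mainthm}'s conditions is ``a path condition in the spirit of Theorem \ref{thm:SDbranchchar}(i)'' should be dropped, since Section \ref{sec:remarks} shows precisely that no such path-type condition characterizes good pairs in semicomplete compositions; what must be tested instead is whether $S$ has a good $(u_S,v_S)$-pair (via Theorem \ref{thm:polalgSD}), and if not, the type A/B structure and backward arcs supplied constructively by Lemma \ref{noGP}, together with the degree conditions and the finite exception list of Table \ref{exceptions} --- all of which is indeed polynomial, as you conclude.
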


  The following corollary of Theorems \ref{QTchara} and \ref{thm:polalgSDcompbr} shows that Conjecture \ref{conj:QTDpol} is true.
  \begin{coro}
    There exists a polynomial algorithm which given a quasi-transitive digraph $D$   and two vertices $u,v$, decides whether $D$ has a good $(u,v)$-pair and outputs such a pair when it exists.
    \end{coro}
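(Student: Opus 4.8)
The plan is to reduce the problem directly to Theorem \ref{thm:polalgSDcompbr} by invoking the structural characterization of Theorem \ref{QTchara}. Given a quasi-transitive digraph $D$ together with the prescribed vertices $u,v$, I would first test in polynomial time whether $D$ is strong, for instance by computing its strong components.

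If $D$ is strong, then Theorem \ref{QTchara}(i) supplies a strong semicomplete digraph $S$ and quasi-transitive digraphs $Q_1,\ldots,Q_s$ with $D=S[Q_1,\ldots,Q_s]$; setting $H_i=Q_i$ exhibits $D$ as a semicomplete composition. If $D$ is not strong, then Theorem \ref{QTchara}(ii) supplies a transitive oriented graph $T$ and quasi-transitive digraphs $Q_1,\ldots,Q_t$ with $D=T[Q_1,\ldots,Q_t]$, exhibiting $D$ as a transitive composition. In both cases $D$ has the form $S[H_1,\ldots,H_s]$ with $S$ either semicomplete or transitive, which is exactly the input format required by Theorem \ref{thm:polalgSDcompbr}. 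The decisive observation is that Theorem \ref{thm:polalgSDcompbr} imposes no condition whatsoever on the factors $H_i$, so only the single top-level decomposition furnished by Theorem \ref{QTchara} is needed; one never has to recurse into the factors $Q_i$ or decompose them further.

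The one remaining ingredient is that the decomposition of Theorem \ref{QTchara} can be computed in polynomial time. This is standard for quasi-transitive digraphs: the partition of $V(D)$ into the blocks $V(Q_i)$ (the canonical decomposition) is obtainable in polynomial time from the strong-component structure of $D$, and from this partition one reads off the quotient digraph $S$ (resp.\ $T$) and the factors $H_i$. Having produced the decomposition, I would simply run the algorithm of Theorem \ref{thm:polalgSDcompbr} on the composition $S[H_1,\ldots,H_s]$ with roots $u,v$; it either returns a good $(u,v)$-pair of $D$ or correctly certifies that none exists, and the total running time is polynomial since both the decomposition step and the call to Theorem \ref{thm:polalgSDcompbr} are polynomial.

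I do not expect any genuine obstacle here, since all the mathematical content is already carried by Theorems \ref{QTchara} and \ref{thm:polalgSDcompbr}. The only point needing an explicit line of justification is the polynomial-time computability of the decomposition in Theorem \ref{QTchara}, and this is routine given the well-understood structure of quasi-transitive digraphs.
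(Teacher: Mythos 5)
Your proposal is correct and takes exactly the paper's route: the corollary is stated there as a direct consequence of combining Theorem \ref{QTchara} (to exhibit $D$ as a semicomplete composition when strong, or a transitive composition when non-strong) with the algorithm of Theorem \ref{thm:polalgSDcompbr}, together with the routine polynomial-time computability of the canonical decomposition. One tiny imprecision: for \emph{strong} $D$ the blocks $V(Q_i)$ are found as the connected components of the complement of the underlying undirected graph of $D$ (not from the strong-component structure of $D$, which is trivial in that case), but this does not affect the correctness of your argument.
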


    {\bf The rest of the paper is organized as follows:} We start out with Section \ref{sec:term} which contains some extra definitions and results that will be used in the paper, in particular Theorem \ref{SDgoodpair} which plays a central role in the proof of Theorem \ref{mainthm}. In Section \ref{sec:characterization} we  state the main result of the paper, Theorem \ref{mainthm} and introduce  some semicomplete compositions that do not have a good $(u,v)$-pair for given vertices $u,v$.
    In Section \ref{sec:almostgoodpairSD} we  show that if a semicomplete digraph $D$ on at least 5 vertices does not have a good $(u,v)$-pair, then one can still produce an out-branching from $u$ and an in-branching at $v$ which share only a well-structured set of arcs. Section \ref{sec:proof} is devoted to the proof of Theorem \ref{mainthm}. This is done by proving a number of structural lemmas which we then apply to obtain the proof. In Section \ref{sec:transitive} we characterize transitive compositions with good $(u,v)$-pairs and use this result and Theorem \ref{mainthm} to characterize quasi-transitive digraphs without a good $(u,v)$-pair. Finally in Section \ref{sec:remarks} we show that the complicated characterization in Theorem \ref{mainthm} cannot be simplified to something similar to Theorem \ref{thm:SDbranchchar}.

\section{Terminology and additional results}\label{sec:term}

Note that to simplify notation we shall sometimes write $x\in D$ and $xy\in D$ to denote that $x$ is a vertex of $D$, respectively, that $xy$ is an arc of $D$.

A strong component $X$ is {\bf initial (resp., terminal)} if $X$ has no in-coming (resp., out-going) arcs in $D$. Note that for a semicomplete digraph $D$, a vertex $v$ belongs to the initial (resp., terminal) component of $D$ if and only if it is the root of some out-branching (resp., in-branching) of $D$.

\begin{remark}\label{rem:nonstrongSD}
    It is easy to see that for a non-strong
  semicomplete digraph $D$ there is a unique ordering $D_1,\ldots{},D_t$, $t\geq 2$ of its strong components and that
  $D=TT_t[D_1,\ldots{},D_t]$, there $TT_t$ is the transitive tournament on $t$ vertices.
\end{remark}
 
    Let $D$ be a digraph. We use $D\left\langle X\right\rangle$ to denote the subdigraph induced by a vertex set $X$. Let $D-X = D\left\langle V\backslash X \right\rangle$. We often identify a subdigraph $H$ of $D$ with its vertex set $V(H)$. For example, we write $D-H$ and $v\in H$ instead of $D-V(H)$ and $v\in V(H)$. A cycle and a path in the paper always means a directed cycle and path.  Let $C_n$ and $P_n$  denote a cycle and a path  with $n$ vertices, respectively.

Let $Q=D[H_1,\ldots{},H_r]$ be a composition. For a vertex $v\in V(Q)$, we use the notation $H(v)$ to denote the digraph $H_i$ containing $v$ and use $v_D$ to denote the vertex in $D$ which $v$ corresponds to.
The following direct consequence of the definition of a composition will be used many times in the paper.

    \begin{remark}\label{Remark-copyofD}
	For a given composition $Q=D[H_1,\ldots,H_n]$, if we pick an arbitrary vertex in each $H_i$, then the digraph induced by these vertices is isomorphic to $D$.
\end{remark}

A digraph $D=(V,A)$ has a \textbf{strong arc decomposition} if $A$ can be partitioned into arc-disjoint subsets $A_1$ and $A_2$ such that both $(V,A_1)$ and $(V,A_2)$ are strong. Clearly if a digraph $D$ has a strong arc decomposition, then it has a good $(u,v)$-pair for every choice of $u,v\in V$.  Let $S_4$ be a digraph which  obtained from the complete digraph with four vertices by deleting the arcs of a cycle of length four. Bang-Jensen, Gutin and Yeo gave  the following characterization of semicomplete compositions with a strong arc decomposition.  

\begin{thm}\label{SCarcdecom}\cite{bangJGT95}
	Let $S$ be a strong semicomplete digraph on $s\geq 2$ vertices and let $H_1,\ldots,H_s$ be arbitrary digraphs. Then $Q=S[H_1,\ldots,H_s]$ has a strong arc decomposition if and only if $Q$ is 2-arc-strong and is not isomorphic to one of the following four digraphs: $S_4$, $C_3[\overline{K}_2,\overline{K}_2,\overline{K}_2]$, $C_3[\overline{K}_2,\overline{K}_2,\overline{K}_3]$, $C_3[\overline{K}_2,\overline{K}_2,P_2]$.
\end{thm}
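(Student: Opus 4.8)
The plan is to prove the two directions separately, with the converse (sufficiency) direction carrying essentially all of the difficulty. For necessity, suppose $A(Q)=A_1\cup A_2$ is a strong arc decomposition. Deleting any single arc $a$ leaves intact whichever of the two spanning strong parts does not contain $a$, so $Q-a$ is strong; as $a$ was arbitrary, $Q$ is $2$-arc-strong. It then remains to check that each of the four listed digraphs is $2$-arc-strong yet admits no strong arc decomposition. Here I would use the arc-count bound: any strong spanning subdigraph of an $N$-vertex digraph has at least $N$ arcs, so a strong arc decomposition forces $|A(Q)|\ge 2N$, and when equality holds each part must be a Hamiltonian cycle. For $S_4$ one has $N=4$ and exactly $8$ arcs, and a short inspection shows the reverse $4$-cycle is the only Hamiltonian cycle, so two arc-disjoint ones cannot coexist. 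For $C_3[\overline{K}_2,\overline{K}_2,\overline{K}_2]$ one has $N=6$ and exactly $12$ arcs, so both parts would have to be Hamiltonian cycles; encoding each such cycle by the triple of perfect matchings it selects across the three complete bipartite ``layers'' $H_i\to H_{i+1}$, a Hamiltonian cycle corresponds to an \emph{odd} number of ``swap'' matchings, while complementarity of the two decompositions forces the two cycles to use swap-counts of opposite parity, a contradiction. The remaining two digraphs carry a little slack over $2N$ and are ruled out by a similarly short structural case check rather than by pure counting.

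For sufficiency, assume $Q$ is $2$-arc-strong and is none of the four exceptions; the goal is to $2$-colour $A(Q)$ so that each colour class is a strong spanning subdigraph. The governing observation is that when $v_iv_j\in A(S)$ the composition contains all $n_in_j$ arcs from $H_i$ to $H_j$ (where $n_i=|V(H_i)|$), so whenever $n_in_j\ge 2$ this layer can contribute an arc to each colour class; the only genuine bottlenecks are the layers between two singleton blocks. Accordingly I would split into the case where every block is a single vertex, so that $Q=S$ is a $2$-arc-strong semicomplete digraph (for which the only obstruction turns out to be $S_4$, excluded by hypothesis), and the case where some block $H_i$ has at least two vertices. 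In the latter case the abundance of parallel inter-block arcs lets one push a strong ``block-level'' copy of $S$ into each colour class; using $2$-arc-strongness to supply the needed in- and out-degrees, one arranges that every vertex of every non-trivial block acquires both an in-arc and an out-arc in each class, and vertices inside a block with no internal arcs are made strongly connected within each class by routing through neighbouring blocks, which is possible precisely because $S$ is strong and $s\ge 2$.

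I would organise the sufficiency argument by induction on $\sum_i n_i$, using that enlarging a block only adds parallel arcs and so tends to make the distribution easier, and reducing a block toward a singleton whenever this preserves $2$-arc-strongness without creating an exceptional configuration. The base cases are $Q=S$ semicomplete (only $S_4$ fails) together with the small compositions on $C_3$ that sit just above the three non-$S_4$ exceptions. Strong connectivity of a colour class is certified throughout by the cut condition $d^-(X),d^+(X)\ge 1$ for all proper nonempty $X$ (equivalently, Theorem \ref{edmonds1973} with $k=1$ and its in-branching analogue), while Remark \ref{Remark-copyofD} — that choosing one vertex per block recovers a copy of $S$ — is what lets a block-level skeleton in each class be promoted to a genuine strong spanning subdigraph of $Q$.

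The main obstacle will be the bookkeeping at the boundary: proving that the only $2$-arc-strong compositions for which this distribution cannot be completed are exactly the four listed digraphs. The subtlety is that the reduction is not cleanly monotone — increasing a block can fail to repair the obstruction, as witnessed by $C_3[\overline{K}_2,\overline{K}_2,\overline{K}_3]$ remaining exceptional even though it has a larger block than $C_3[\overline{K}_2,\overline{K}_2,\overline{K}_2]$ — so every small ``tight'' case, in which the arc count or the pattern of singleton blocks leaves no room to give both colour classes a strong block-level skeleton, must be checked directly and shown either to coincide with one of the four exceptions or to admit an ad hoc strong arc decomposition. This is precisely where the parity phenomenon exhibited above for $C_3[\overline{K}_2,\overline{K}_2,\overline{K}_2]$ must be controlled and excluded across all larger configurations.
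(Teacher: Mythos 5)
First, a point of reference: this paper never proves Theorem \ref{SCarcdecom} --- it is quoted with a citation from \cite{bangJGT95}, where its proof occupies essentially an entire article, itself resting on the theorem of Bang-Jensen and Yeo \cite{bangC24} that every 2-arc-strong semicomplete digraph other than $S_4$ has a strong arc decomposition. So the comparison is against that external proof, and measured against it your proposal has genuine gaps on the sufficiency side. The necessity direction you give is essentially sound: deleting one arc leaves the other colour class intact, so $Q$ is 2-arc-strong; the count $|A(Q)|\geq 2|V(Q)|$ with equality forcing both classes to be Hamiltonian cycles is correct; $S_4$ indeed has a unique Hamiltonian cycle; and your matching-parity argument for $C_3[\overline{K}_2,\overline{K}_2,\overline{K}_2]$ is correct and elegant (the remaining two exceptions still require the finite check you defer, but that is routine).

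The sufficiency direction, however, is a plan rather than a proof, and two of its load-bearing steps fail or are missing. (a) Your base case ``all blocks singletons, only $S_4$ fails'' is precisely the Bang-Jensen--Yeo theorem; it does not ``turn out'' --- it is the deepest single ingredient of the real proof and cannot be absorbed into your induction for free. (b) The certificate you propose for strength of a colour class --- a strong block-level skeleton of $S$ together with an in-arc and an out-arc at every vertex --- is simply not sufficient. Take $S=C_2$ with blocks $\{a_1,a_2\}$ and $\{b_1,b_2\}$ and the class $\{a_1b_1,\,b_1a_1,\,a_2b_2,\,b_2a_2\}$: its quotient is strong and every vertex has in- and out-degree one, yet it is the disjoint union of two $2$-cycles, not strong. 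Exactly this kind of failure is what the exceptional digraphs $C_3[\overline{K}_2,\overline{K}_2,\overline{K}_2]$, $C_3[\overline{K}_2,\overline{K}_2,\overline{K}_3]$, $C_3[\overline{K}_2,\overline{K}_2,P_2]$ encode, and note also that 2-arc-strongness of $Q$ does not give 2-arc-strongness of $S$ (in all three $S=C_3$ has cut arcs), so the two skeletons cannot in general be chosen arc-disjoint layer by layer. (c) The induction itself is not well-founded as described: you concede that growing or shrinking a block is not monotone for the property, so ``reduce toward a singleton and check the tight cases directly'' leaves unspecified exactly the set of configurations whose analysis constitutes the theorem. In short, what you have is a correct necessity argument plus a reasonable strategy whose unproved steps are the theorem itself.
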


The following theorem follows immediately from Theorems \ref{QTchara}  and  \ref{SCarcdecom}. 
\begin{thm}\label{QTarcdecom}\cite{bangJGT95}
	A quasi-transitive digraph $Q$ has a strong arc decomposition if and only if $Q$ is 2-arc-strong and is not isomorphic to one of the following four digraphs: $S_4$, $C_3[\overline{K}_2,\overline{K}_2,\overline{K}_2]$, $C_3[\overline{K}_2,\overline{K}_2,\overline{K}_3]$, $C_3[\overline{K}_2,\overline{K}_2,P_2]$.
\end{thm}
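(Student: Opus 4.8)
The plan is to show that each of the two conditions in the statement forces $Q$ to be a strong semicomplete composition, and then to quote Theorem \ref{SCarcdecom} verbatim. First I would dispose of the trivial and non-strong cases. If $Q$ has a strong arc decomposition $A=A_1\cup A_2$, then each of $(V,A_1),(V,A_2)$ is a strong spanning subdigraph, so $Q$ itself is strong; and if $Q$ is $2$-arc-strong it is strong by definition. Hence for a non-strong $Q$ both sides of the claimed equivalence are false and there is nothing to prove, so I may assume $Q$ is strong and has at least two vertices.

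Next I would invoke the structural Theorem \ref{QTchara}(i): a strong quasi-transitive digraph $Q$ on at least two vertices can be written as $Q=S[Q_1,\ldots,Q_s]$, where $S$ is a strong semicomplete digraph on $s$ vertices and each $Q_i$ is a single vertex or a non-strong quasi-transitive digraph. Since $Q$ is strong with more than one vertex, we must have $s\geq 2$ (if $s=1$ then $Q=Q_1$ would be a single vertex or non-strong, a contradiction). Therefore $Q$ is a semicomplete composition with $S$ strong on $s\geq 2$ vertices, so it lies exactly in the scope of Theorem \ref{SCarcdecom}.

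Now the equivalence is immediate. For the ``only if'' direction, if $Q$ has a strong arc decomposition then it is $2$-arc-strong (deleting any single arc $a$ leaves the strong spanning subdigraph on the part of the decomposition not containing $a$ intact), and, being a semicomplete composition that has a strong arc decomposition, it cannot be any of $S_4$, $C_3[\overline{K}_2,\overline{K}_2,\overline{K}_2]$, $C_3[\overline{K}_2,\overline{K}_2,\overline{K}_3]$, $C_3[\overline{K}_2,\overline{K}_2,P_2]$, since Theorem \ref{SCarcdecom} asserts that these four have no strong arc decomposition. For the ``if'' direction, if $Q$ is $2$-arc-strong and not one of these four, then, viewing it as the semicomplete composition produced above, Theorem \ref{SCarcdecom} yields a strong arc decomposition.

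There is essentially no obstacle here, which is why the theorem ``follows immediately.'' The only points requiring a line of care are in the reduction step: confirming that each hypothesis (``$Q$ has a strong arc decomposition'' and ``$Q$ is $2$-arc-strong'') forces $Q$ to be strong on at least two vertices, so that Theorem \ref{QTchara}(i) applies with $s\geq 2$; and noting that the four exceptional digraphs are themselves quasi-transitive (each is a composition of a directed triangle or of $S_4$ with independent sets or a single arc, which one checks directly satisfies the quasi-transitivity condition on length-two paths), so that they genuinely occur as exceptions in the quasi-transitive setting rather than being spurious.
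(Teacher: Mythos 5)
Your proposal is correct and follows exactly the paper's route: the paper derives Theorem \ref{QTarcdecom} immediately from Theorems \ref{QTchara} and \ref{SCarcdecom}, and your write-up simply makes explicit the reduction (both hypotheses force $Q$ strong, so Theorem \ref{QTchara}(i) expresses $Q$ as a composition of a strong semicomplete digraph on $s\geq 2$ vertices) that the paper leaves to the reader. The details you supply -- strong arc decomposition implies 2-arc-strong, and $s\geq 2$ since otherwise $Q$ would be a single vertex or non-strong -- are precisely the ``immediate'' steps intended.
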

 
\begin{thm}\cite{bangC24}\label{2ArcS-SD}
	Every 2-arc-strong semicomplete digraph $D$ contains a good $(u,v)$-pair for every choice of $u, v \in V(D)$.
      \end{thm}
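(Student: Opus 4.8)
The plan is to deduce the result from the known characterization of semicomplete compositions possessing a \emph{strong arc decomposition} (Theorem~\ref{SCarcdecom}), exploiting the fact, already recorded in the excerpt, that a strong arc decomposition immediately yields a good $(u,v)$-pair for \emph{every} ordered pair $u,v$. The key observation is that a single semicomplete digraph $D$ may be viewed as the trivial composition $D=D[K_1,\ldots,K_1]$ with $S=D$, so Theorem~\ref{SCarcdecom} applies verbatim; the only one of its four exceptional digraphs that is relevant here will turn out to be $S_4$.

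First I would record two elementary facts. A $2$-arc-strong digraph is strong and has every in-degree and every out-degree at least $2$; in particular $D$ is a strong semicomplete digraph on $s=|V(D)|\ge 3$ vertices, so the hypotheses $s\ge 2$ and ``$S$ strong semicomplete'' of Theorem~\ref{SCarcdecom} are satisfied upon taking each $H_i$ to be a single vertex $K_1$. Next, among the four exceptional digraphs $S_4$, $C_3[\overline{K}_2,\overline{K}_2,\overline{K}_2]$, $C_3[\overline{K}_2,\overline{K}_2,\overline{K}_3]$, $C_3[\overline{K}_2,\overline{K}_2,P_2]$ appearing in Theorem~\ref{SCarcdecom}, the last three all contain a block $\overline{K}_2$ and hence a pair of non-adjacent vertices, so none of them is semicomplete; thus the only digraph on this list to which a semicomplete $D$ can be isomorphic is $S_4$. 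Consequently, if $D\not\cong S_4$ then $D$ is $2$-arc-strong and is not one of the four exceptions, so by Theorem~\ref{SCarcdecom} it has a strong arc decomposition, and therefore a good $(u,v)$-pair for every $u,v\in V(D)$.

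It remains to treat the single digraph $D\cong S_4$. Here I would simply verify the claim directly: writing $S_4$ on $\{1,2,3,4\}$ as the complete digraph minus the arcs of the cycle $1\to 2\to 3\to 4\to 1$, its automorphism group is generated by the rotation $i\mapsto i+1 \pmod 4$, so up to symmetry only the four cases $v\in\{u,u+1,u+2,u+3\}$ need checking, and in each one can exhibit an explicit arc-disjoint out-branching from $u$ and in-branching to $v$ (for $u=1,v=2$, for instance, the out-branching $1\to3\to2\to4$ together with the in-branching $\{4\to2,\,1\to4,\,3\to1\}$ works). Alternatively, since $S_4$ is visibly not one of the small exceptions in Figure~\ref{fig-SDexceptions}(b)--(f), one could instead invoke Theorem~\ref{thm:SDbranchchar} and verify only its linkage condition~(i) on this one fixed four-vertex digraph.

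The main obstacle — indeed essentially the only nonroutine point — is the $S_4$ case: $S_4$ is precisely the $2$-arc-strong semicomplete digraph that fails to admit a strong arc decomposition, so it cannot be covered by the general reduction and must be handled separately by an explicit finite check. Everything else follows immediately once the reduction to Theorem~\ref{SCarcdecom} is set up and the observation that $S_4$ is the unique semicomplete member of its exception list is in place.
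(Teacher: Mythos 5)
Your proposal is correct, but there is nothing in the paper to compare it against: Theorem \ref{2ArcS-SD} is quoted from \cite{bangC24} and the paper gives no proof of it. Your derivation is the canonical one, and it is sound as written: you specialize Theorem \ref{SCarcdecom} to the trivial composition $D=D[\overline{K}_1,\ldots,\overline{K}_1]$ (legitimate, since a $2$-arc-strong $D$ is strong and has at least $3$ vertices), you correctly observe that the three exceptions containing an $\overline{K}_2$ block have non-adjacent vertices and hence cannot be semicomplete, so $S_4$ is the only relevant exception, you use the fact (recorded in the paper just before Theorem \ref{SCarcdecom}) that a strong arc decomposition gives a good $(u,v)$-pair for every choice of roots, and you dispose of $S_4$ by a finite check. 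Your explicit pair for $(u,v)=(1,2)$ is valid, and the remaining rotation classes $v\in\{1,3,4\}$ with $u=1$ do admit similar pairs (e.g.\ for $v=1$ take the out-branching $1{\to}3{\to}2{\to}4$ and the in-branching $\{2{\to}1,4{\to}2,3{\to}1\}$), so the asserted finite verification goes through; also, you only need that the rotation is an automorphism of $S_4$, not that it generates the full automorphism group, so that claim is harmless. One caveat worth stating: within the logic of this paper your argument is a valid derivation from quoted results, but it is not an independent proof of the 2004 theorem, since the proof of Theorem \ref{SCarcdecom} in \cite{bangJGT95} itself builds on the decomposition theorem for $2$-arc-strong semicomplete digraphs of \cite{bangC24}; your route is essentially that theorem, specialized back from compositions, plus the $S_4$ check, which is exactly how the original authors deduce the good-pair statement.
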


\begin{lem} \label{(D-X)->D}
	Let $D$ be a digraph and let $X$ be a subset of $V(D)$ such that every vertex of $X$ has both an in-neighbor and an out-neighbor in $D-X$. If $D-X$ has a good $(u,v)$-pair then $D$ has a good $(u,v)$-pair. 
\end{lem}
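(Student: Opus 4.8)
The plan is to start from a good $(u,v)$-pair $B^+_u, B^-_v$ in $D-X$ (so in particular $u,v\in V(D)\setminus X$) and extend both branchings to spanning branchings of $D$ by attaching every vertex of $X$ as a leaf, using the in-neighbors for the out-branching and the out-neighbors for the in-branching. Concretely, I would process each $x\in X$ as follows: since $x$ has an in-neighbor in $D-X$, choose one, call it $y_x$, and add the arc $y_xx$ to $B^+_u$; since $x$ has an out-neighbor in $D-X$, choose one, call it $z_x$, and add the arc $xz_x$ to $B^-_v$. Denote the resulting subdigraphs $\hat{B}^+_u$ and $\hat{B}^-_v$.

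The first thing to verify is that $\hat{B}^+_u$ is an out-branching of $D$ rooted at $u$ and $\hat{B}^-_v$ is an in-branching of $D$ rooted at $v$. For $\hat{B}^+_u$: every vertex of $D-X$ keeps exactly the in-degree it had in $B^+_u$ (one, except $u$ which has zero), and each $x\in X$ now has in-degree exactly one via the single added arc $y_xx$; connectivity to the root follows because $y_x$ is already reachable from $u$ in $B^+_u$, so appending the leaf $x$ preserves spanning connectivity. The argument for $\hat{B}^-_v$ is symmetric, with out-degrees in place of in-degrees and the added arcs $xz_x$ giving each $x\in X$ out-degree one.

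The heart of the proof is arc-disjointness of $\hat{B}^+_u$ and $\hat{B}^-_v$, and the key observation is that the newly added arcs point in opposite directions across the cut between $X$ and $D-X$. Each new arc of the out-branching has its \emph{head} in $X$ (it is of the form $y_xx$ with $y_x\in D-X$), while each new arc of the in-branching has its \emph{tail} in $X$ (it is of the form $xz_x$ with $z_x\in D-X$); no single arc can simultaneously have its head and its tail in $X$ here, so the two sets of new arcs are disjoint from each other. Moreover every arc of the original $B^+_u$ and $B^-_v$ has both endpoints in $D-X$, so no new arc (which has exactly one endpoint in $X$) can coincide with an original arc. Since $B^+_u$ and $B^-_v$ are arc-disjoint by hypothesis, combining these three observations shows $\hat{B}^+_u$ and $\hat{B}^-_v$ share no arc, giving a good $(u,v)$-pair in $D$.

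I do not expect a real obstacle here: the only thing requiring care is making the direction-of-arcs observation precise, namely that the opposite orientations of the attachment arcs (into $X$ for the out-branching, out of $X$ for the in-branching) automatically keep them disjoint both from each other and from the two branchings living entirely inside $D-X$. This is exactly why the hypothesis demands that each vertex of $X$ have \emph{both} an in-neighbor and an out-neighbor in $D-X$.
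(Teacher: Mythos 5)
Your proposal is correct and is essentially identical to the paper's own (very short) proof: both extend the good $(u,v)$-pair of $D-X$ by attaching each $x\in X$ as a leaf, adding an arc $y_xx$ from an in-neighbor to the out-branching and an arc $xz_x$ to an out-neighbor to the in-branching, with arc-disjointness following from the orientations across the cut. Your write-up merely spells out the verification that the paper leaves implicit.
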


\begin{proof}
	Let $(O,I)$ be a good $(u,v)$-pair in $D-X$. By assumption, every vertex $x\in X$ has an out-neighbor $x_O$ and in-neighbor $x_I$ in $D-X$. Then $(O+\{x_Ix:x\in X\},I+\{xx_O:x\in X\})$ is a good $(u,v)$-pair in $D$.
\end{proof}

Note that if $D$ has a good $(u,v)$-pair with $u\neq v$ then $D$ contains two arc-disjoint $(u,v)$-paths. Thus the following holds.
\begin{remark}\label{Remark-uvatleast2}
	Let $D$ be a digraph and let $u,v$ be two distinct vertices. If there is a good $(u,v)$-pair in $D$, then $d_D^+(u)\geq2$ and $d_D^-(v)\geq2$.  
\end{remark}

\begin{lem}\label{OutbranPath}\cite{bangJGTsub}
	Let $D$ be a digraph and let $u,v$ be two vertices of $D$ such that there is an out-branching rooted at $u$ in $D$. Suppose that $D$ has no out-branching $B_{u,D}^+$ which is arc-disjoint from some $(u,v)$-path. Then there exists a partition $V_1,V_2$ of $V(D)$ such that $v\in V_1,u\in V_2$ and $d_D^+(V_2)=1$.
\end{lem}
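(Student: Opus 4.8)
The plan is to reformulate the desired conclusion as the existence of an \emph{in-cut of value one} around $v$, and then to show that such a cut must exist whenever no out-branching from $u$ is arc-disjoint from a $(u,v)$-path. First I would observe that a partition $V_1,V_2$ with $v\in V_1$, $u\in V_2$ and $d_D^+(V_2)=1$ is the same thing as a set $V_1$ with $v\in V_1\subseteq V\setminus\{u\}$ and $d_D^-(V_1)=1$, since $d_D^+(V_2)=d_D^-(V_1)$ when $V_2=V\setminus V_1$. Moreover, since $D$ has an out-branching rooted at $u$, Edmonds' Branching Theorem (Theorem~\ref{edmonds1973}) gives $d_D^-(X)\ge 1$ for every nonempty $X\subseteq V\setminus\{u\}$; hence it suffices to produce a set $V_1\ni v$ avoiding $u$ with $d_D^-(V_1)\le 1$, since this forces $d_D^-(V_1)=1$ and thus $d_D^+(V_2)=1$.

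The engine of the proof is the characterization that $D$ has an out-branching $B_u^+$ arc-disjoint from some $(u,v)$-path if and only if $d_D^-(X)\ge 2$ for every $X$ with $v\in X\subseteq V\setminus\{u\}$ (the remaining requirement $d_D^-(X)\ge 1$ for all nonempty $X\subseteq V\setminus\{u\}$ holding automatically by hypothesis). The easy direction is immediate: if such a branching $B$ and path $P$ both exist, then for any $X$ with $v\in X$, $u\notin X$, each of $B$ and $P$ must enter $X$ in order to reach $v$, and since $B$ and $P$ are arc-disjoint these contribute at least two distinct arcs to $d_D^-(X)$. Granting the characterization, the lemma follows by contraposition: if no out-branching from $u$ is arc-disjoint from a $(u,v)$-path, then the cut condition fails, so some $X$ with $v\in X\subseteq V\setminus\{u\}$ has $d_D^-(X)\le 1$, and taking $V_1=X$, $V_2=V\setminus X$ yields the required partition.

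For the nontrivial (sufficiency) direction I would proceed as follows. Fixing a $(u,v)$-path $P$, the digraph $D-A(P)$ has an out-branching rooted at $u$ exactly when $d_{D-A(P)}^-(X)\ge 1$ for all nonempty $X\subseteq V\setminus\{u\}$, i.e.\ when $P$ enters each such $X$ at most $d_D^-(X)-1$ times. Thus the task is to route a $u$-$v$ path that never over-uses a \emph{tight} set (one with $d_D^-(X)=1$); the hypothesis $d_D^-(X)\ge 2$ for all $X\ni v$ guarantees in particular that no tight set contains $v$, so $P$ is never forced to cross a tight cut. Equivalently, one can phrase this as packing two arc-disjoint out-trees rooted at $u$, one spanning $V$ and one reaching $v$ --- the demand version of Edmonds' theorem with requirements $r(v)=2$ and $r(x)=1$ for $x\ne v$ --- whose cut hypothesis is precisely the condition above, and then prune the second tree to a $u$-$v$ path.

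The main obstacle I anticipate is this sufficiency step, namely converting the cut condition into an actual pair consisting of a \emph{spanning} out-branching together with an arc-disjoint $u$-$v$ path. The delicate point is that a packing merely meeting the demands $r$ produces two out-trees whose union covers $V$ with $v$ covered twice, but does not by itself exhibit one tree that is spanning and a path disjoint from it; extracting these (equivalently, routing $P$ so as to respect all tight cuts simultaneously) relies on the fact that the tight sets form a laminar/uncrossable family, so that a tree-surgery argument can redirect the coverage of each vertex around the reserved path. I expect this uncrossing argument on the family of tight sets to be the technical heart of the proof.
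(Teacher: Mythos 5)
Your reformulation is faithful: since $d_D^+(V_2)=d_D^-(V_1)$ and, by Theorem \ref{edmonds1973} with $k=1$, $d_D^-(X)\geq 1$ for every nonempty $X\subseteq V\setminus\{u\}$, the lemma is exactly equivalent to your sufficiency claim that $d_D^-(X)\geq 2$ for every $X$ with $v\in X\subseteq V\setminus\{u\}$ forces an out-branching rooted at $u$ arc-disjoint from some $(u,v)$-path; your necessity argument is correct but is the trivial direction. The problem is that the sufficiency claim --- the entire engine of your proof --- is never established (you defer the ``tree-surgery/uncrossing'' conversion as something you expect to work), and in fact it is \emph{false}. Take $V=\{u,a,b,c,v\}$ with $A=\{ua,\,ab,\,bv,\,uc,\,cv\}$. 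Every nonempty $X\subseteq V\setminus\{u\}$ has $d^-(X)\geq 1$, and each of the eight sets $X$ with $v\in X\subseteq V\setminus\{u\}$ has $d^-(X)\in\{2,3\}$. Yet $ua$, $ab$, $uc$ are the \emph{unique} arcs entering $a$, $b$, $c$, so every out-branching rooted at $u$ contains all three and hence meets both $(u,v)$-paths $uabv$ and $ucv$: no out-branching is arc-disjoint from any $(u,v)$-path. The precise non sequitur in your sketch is the sentence ``no tight set contains $v$, so $P$ is never forced to cross a tight cut'': a $(u,v)$-path can be forced through tight sets \emph{on the way} to $v$, and here every $(u,v)$-path enters one of the tight sets $\{a\},\{b\},\{c\}$ through its unique entering arc. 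Note also that the demand packing you plan to start from ($r(v)=2$, $r(x)=1$) \emph{does} exist in this digraph --- the two paths $uabv$ and $ucv$ are arc-disjoint out-trees rooted at $u$ covering every vertex, with $v$ in both --- so the failure sits exactly at the step you flagged as the technical heart: no surgery on such a packing can produce a spanning out-branching plus an arc-disjoint path, because none exists.

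The same digraph in fact violates the lemma exactly as it is stated here for arbitrary digraphs: the hypotheses hold, yet no $V_1$ with $v\in V_1\subseteq V\setminus\{u\}$ has $d^-(V_1)=1$. This tells you what is really going on: the lemma is imported without proof from \cite{bangJGTsub}, a paper about \emph{semicomplete} digraphs, and in the present paper it is only ever applied to semicomplete digraphs ($S\left\langle W_1\right\rangle$ in the proof of Lemma \ref{noGP}, and $S-\{u,v\}$ in the proof of Lemma \ref{shareonearc}). In the counterexample $b$ and $c$ (likewise $a$ and $c$, $u$ and $b$) are non-adjacent, which is what permits two ``private channels'' into $v$, each carrying its own tight set; adding the arc $bc$, say, already creates the good pair $\{ua,ab,bc,bv\}$ and $ucv$. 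So any correct proof must actually use adjacency of every pair of vertices, whereas your argument is purely submodular/cut-based and invokes no property that fails in the counterexample --- it would, if completed, prove a false general statement. The gap is therefore not a missing technical detail but an unfixable one in the proposed generality.
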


Next we consider arc-connectivity of  compositions of digraphs.

\begin{lem}\label{Qkstr-Qprikstr}
	Let $D$ be a digraph of order $n\geq 2$ and let $H_1,\ldots,H_n$ be arbitrary digraphs. Suppose that $Q=D[H_1,\ldots,H_n]$. If $Q$ is $k$-arc-strong and $H_i$ is an independent set with $|H_i|\geq k+1$, then the digraph $Q^{\prime}$ obtained from $Q$ by deleting a vertex from $H_i$ is also $k$-arc-strong.
\end{lem}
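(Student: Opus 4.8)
We have a composition $Q=D[H_1,\ldots,H_n]$ that is $k$-arc-strong, with $H_i$ an independent set on at least $k+1$ vertices. We delete one vertex $w$ from $H_i$ to get $Q'$, and we must show $Q'$ is still $k$-arc-strong.

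**Key observations.** Since $H_i$ is independent, the vertices of $H_i$ have no arcs among themselves; every arc incident with a vertex of $H_i$ goes to or from some $H_j$ with $j \neq i$. Crucially, all vertices of $H_i$ are "twins" in $Q$: they have exactly the same in-neighbors and out-neighbors (namely all of $V(H_j)$ for every in- or out-neighbor $v_j$ of $v_i$ in $D$). The plan is to use Menger's theorem: $Q'$ is $k$-arc-strong iff for every ordered pair $x,y$ of distinct vertices of $Q'$ there are $k$ arc-disjoint $(x,y)$-paths in $Q'$.

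**The proof approach.** Let me verify the $k$ arc-disjoint $(x,y)$-paths condition in $Q'$ for arbitrary distinct $x,y \in V(Q')$. Since $Q$ is $k$-arc-strong, by Menger there exist $k$ arc-disjoint $(x,y)$-paths $P_1,\ldots,P_k$ in $Q$. If none of them uses the deleted vertex $w$, they survive in $Q'$ and we are done. Otherwise some paths pass through $w$; since they are arc-disjoint, at most $\dmax$ of them use $w$, but more to the point each such path enters $w$ along an arc $aw$ and leaves along an arc $wb$ with $a,b \notin H_i$. The idea is to reroute each such path through a surviving vertex $w'$ of $H_i \setminus \{w\}$. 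Because $w'$ is a twin of $w$, the arcs $aw'$ and $w'b$ exist in $Q'$, so replacing the subpath $a\,w\,b$ by $a\,w'\,b$ gives a valid $(x,y)$-walk avoiding $w$.

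**The main obstacle and how to handle it.** The difficulty is arc-disjointness after rerouting: if two of the $k$ paths both used $w$, naively sending both through the same replacement $w'$ would reuse the arcs at $w'$. This is exactly where the hypothesis $|H_i| \geq k+1$ is needed. I would argue as follows. At most $k$ of the arc-disjoint paths can pass through $w$ (in fact the number is bounded by $\min(d^-(w),d^+(w))$, but $k$ suffices); and at most one of $x,y$ lies in $H_i$, so at least $|H_i|-2 \geq k-1$ vertices of $H_i \setminus \{w\}$ are available as internal rerouting targets, plus possibly $x$ or $y$ themselves are twins of $w$. A cleaner route is to assign to each path through $w$ a \emph{distinct} surviving twin $w' \in H_i \setminus \{w\}$; since there are at most $k$ such paths (as they are arc-disjoint and the relevant arc-counts are at most... ) but at least $|H_i|-1 \geq k$ twins available, such an injective assignment exists. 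Using distinct twins guarantees the rerouted segments $a\,w'\,b$ use pairwise distinct vertices and hence arcs that were not used by any other $P_j$ (the only shared vertex structure is at $w$, which we have now spread out). After rerouting, the $k$ walks are arc-disjoint and avoid $w$; extracting $(x,y)$-paths from them preserves arc-disjointness. Hence $Q'$ has $k$ arc-disjoint $(x,y)$-paths for every pair, so $Q'$ is $k$-arc-strong. The care needed in the injective twin-assignment — checking that each reused twin also avoids clashing with arcs already present on paths not through $w$ — is the one genuinely delicate point, and it hinges precisely on the counting $|H_i| \geq k+1$.
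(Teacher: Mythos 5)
Your overall strategy (Menger plus rerouting through twin vertices) is reasonable in spirit, but the decisive step is asserted rather than proved, and as stated it is false. You claim that because there are at most $k$ transits through the deleted vertex $w$ and at least $|H_i|-1\geq k$ surviving twins, an \emph{injective} assignment of transits to twins yields arc-disjoint rerouted paths. Injectivity only guarantees that the rerouted segments avoid \emph{each other}; it does nothing to prevent a rerouted segment $a_j\,w'\,b_j$ from colliding with an arc $a_jw'$ or $w'b_j$ already used by one of the paths that does \emph{not} pass through $w$. Counting cannot close this gap: for a fixed transit $(a_j,b_j)$, each of the other $k-1$ paths can block one twin via an out-arc of $a_j$ and another via an in-arc of $b_j$, so up to $2(k-1)$ of the at least $k$ available twins may be forbidden, and $2(k-1)\geq k$ already for $k\geq 2$. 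The failure is realizable: with $k=3$, $H_i=\{w,w_1,w_2,w_3\}$ and the arc-disjoint paths $P_1=xawby$, $P_2=xpaw_1cdw_2bqy$, $P_3=xraw_3sy$ (all compatible with $H_i$ independent and the composition structure), the transit $(a,b)$ of $P_1$ finds every twin blocked --- $aw_1$ and $w_2b$ by $P_2$, $aw_3$ by $P_3$ --- so no rerouting of $P_1$ alone works, even though three arc-disjoint $(x,y)$-paths avoiding $w$ do exist (one must also modify $P_2$ and $P_3$). Your closing sentence, that the delicate point ``hinges precisely on the counting $|H_i|\geq k+1$'', is exactly where a proof is missing; repairing it needs a genuinely new ingredient (simultaneous rerouting of all paths through $H_i$, a Hall-type argument on a larger bipartite structure, or a global flow argument). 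Minor slips besides this: the undefined placeholder \texttt{\string\dmax}, and the claim that at most one of $x,y$ lies in $H_i$ (both can).

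For contrast, the paper's proof dualizes to cuts and thereby avoids all rerouting conflicts: if $Q'$ had a partition $(X,\bar X)$ with at most $k-1$ arcs from $X$ to $\bar X$, then $k$-arc-strongness of $Q$ forces the deleted vertex to have an in-neighbor $v^-\in X$ and an out-neighbor $v^+\in\bar X$ (otherwise placing the deleted vertex on the appropriate side gives a cut of $Q$ with at most $k-1$ arcs); independence of $H_i$ puts $v^-,v^+$ outside $H_i$, so each of the at least $k$ surviving twins $w$ yields a path $v^-wv^+$, and these paths are internally disjoint, hence each contributes a distinct arc from $X$ to $\bar X$ --- at least $k$ crossing arcs, a contradiction. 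In the cut formulation the hypothesis $|H_i|\geq k+1$ bites immediately, which is why that route is the one that works cleanly.
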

\begin{proof}
	Let $v$ be the vertex we delete from $H_i$. If $Q^{\prime}$ is not $k$-arc-strong, then there exists a partition $X,\bar{X}$ such that there are at most $k-1$ arcs from $X$ to $\bar{X}$. As $Q$ is $k$-arc-strong, $v$ has an in-neighbor $v^-$ in $X$ and out-neighbor $v^+$ in $\bar{X}$. It follows by the fact that $H_i$ is an independent set that none of $v^-$ and $v^+$ belong to $H_i$. Further, for each $w\in H_i-v$, $v^-wv^+$ forms one $(v^-,v^+)$-path in $Q^{\prime}$. As $|H_i|\geq k+1$, there are at least $k$ such paths and hence there are at least $k$ arcs from $X$ to $\bar{X}$, a contradiction.
\end{proof}

\begin{lem}\label{Dstr-Qkstr}
	Let $D$ be a digraph of order $n\geq 2$ and let $H_1,\ldots,H_n$ be arbitrary digraphs. Suppose that $Q=D[H_1,\ldots,H_n]$. If $D$ is strong and $|H_i|\geq k$ for each $i\in[n]$, then the digraph $Q$ is $k$-arc-strong.
\end{lem}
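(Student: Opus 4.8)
The plan is to verify the standard cut characterisation of arc-strong connectivity: since $Q$ being $k$-arc-strong is equivalent to $d_Q^+(X)\ge k$ for every set $X$ with $\emptyset\neq X\subsetneq V(Q)$ (this is exactly the cut condition already used in the proof of Lemma \ref{Qkstr-Qprikstr}), I would fix such an $X$, set $Y=V(Q)\setminus X$, and for each $i\in[n]$ write $x_i=|X\cap V(H_i)|$ and $y_i=|V(H_i)|-x_i$, so that $x_i+y_i=|H_i|\ge k$. The arcs of $Q$ leaving $X$ fall into those internal to some $H_i$ (which only help) and the composition arcs, and the latter already give $d_Q^+(X)\ge \sum_{v_iv_j\in A(D)} x_i y_j$. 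So the whole task reduces to showing this double sum is at least $k$ whenever $X$ and $Y$ are both nonempty, using only that $D$ is strong and every $|H_i|\ge k$.

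I would organise the argument according to whether some $H_i$ lies entirely on one side of the cut. First, if some component is contained in $Y$ (some $x_i=0$), then $\{i:x_i\ge 1\}$ is a nonempty proper subset of $V(D)$, so strong connectivity of $D$ yields an arc $v_av_b$ with $x_a\ge 1$ and $x_b=0$; this single pair already contributes $x_ay_b\ge 1\cdot|H_b|\ge k$ composition arcs. The case where some component is contained in $X$ is symmetric: $\{i:y_i=0\}$ is nonempty and proper, so there is an arc $v_av_b$ with $y_a=0$ (hence $x_a=|H_a|\ge k$) and $y_b\ge 1$, again giving $x_ay_b\ge k$.

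The remaining, and I expect the hardest, case is when every $H_i$ is split, i.e. $x_i\ge 1$ and $y_i\ge 1$ for all $i$; here an individual composition arc may contribute as little as $1$, so the ``find one good arc'' idea breaks down. The fix I have in mind is a double-counting trick: writing the sum as $\sum_j y_j\big(\sum_{i:v_iv_j\in A(D)} x_i\big)$ and using that, since $D$ is strong, every $v_j$ has an in-neighbour $v_i$ with $x_i\ge 1$, so the inner sum is at least $1$ and hence $d_Q^+(X)\ge \sum_j y_j=|Y|$; grouping the other way (through out-neighbours) gives $d_Q^+(X)\ge|X|$. Therefore $d_Q^+(X)\ge \max\{|X|,|Y|\}\ge |V(Q)|/2\ge nk/2\ge k$, where the last step uses $n\ge 2$. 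I would conclude by noting that these three cases are exhaustive, so the cut condition holds for every $X$. The only points requiring care are the bookkeeping that the composition arcs leaving $X$ really amount to $\sum_{v_iv_j\in A(D)} x_i y_j$ (with internal arcs contributing nonnegatively), and the explicit appeal to the cut characterisation of $k$-arc-strongness.
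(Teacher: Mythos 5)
Your proof is correct, but it takes a genuinely different route from the paper's. The paper works on the path side of Menger's theorem: it reduces the claim to exhibiting, for every pair $u,v$ of vertices of $Q$, $k$ arc-disjoint $(u,v)$-paths, and builds these explicitly. Using Remark \ref{Remark-copyofD} it assumes $u,v\in V(D)$ when they lie in different $H_r$'s, takes a single $(u,v)$-path $uw_1\cdots w_tv$ in the strong digraph $D$, and ``fattens'' it into $k$ arc-disjoint paths by routing the $j$-th copy through the $j$-th of $k$ distinct vertices of each intermediate $H(w_i)$ (these exist since $|H_i|\geq k$); the case $H(u)=H(v)$ is handled by first stepping to a copy of an out-neighbour of $u_D$. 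You instead work on the cut side, verifying $d_Q^+(X)\geq k$ for every nonempty proper $X\subset V(Q)$ via the three cases (some $H_i\subseteq Y$, some $H_i\subseteq X$, all $H_i$ split), and your bookkeeping is sound in each: the one-good-arc argument gives $x_ay_b\geq k$ in the first two cases, and the double count in the all-split case correctly yields $d_Q^+(X)\geq\max\{|X|,|Y|\}\geq nk/2\geq k$ using only that every vertex of the strong digraph $D$ has an in- and an out-neighbour. Both arguments are complete. The paper's is constructive (it produces the actual paths, in line with the algorithmic emphasis of the paper, and reuses machinery already set up), whereas yours invokes the cut characterization of $k$-arc-strongness as a black box but avoids any case distinction on the location of $u$ and $v$, and in the all-split case it even establishes the stronger bound $d_Q^+(X)\geq\max\{|X|,|Y|\}$, which the paper's path construction does not give.
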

\begin{proof}
It suffices to show that for any two vertices $u$ and $v$ of $Q$, there are $k$ arc-disjoint $(u,v)$-paths in $Q$. If $u$ and $v$ belong to different $H_r$s, we may assume that both $u$ and $v$ are vertices of $D$ by Remark \ref{Remark-copyofD}, that is, $u_D=u,v_D=v$. As $D$ is strong, there is a $(u,v)$-path in $D$, say $uw_1\cdots w_tv$. Recall that $H(x)$ is the digraph $H_i$ containing $x$. For each $i\in[t]$, let $w_{ij}$ $(j\in[k])$ be $k$ distinct vertices of $H(w_i)$. Then $P_1,\ldots,P_k$, where  $P_j=uw_{1j}\cdots w_{tj}v$ $(j\in[k])$ form $k$ arc-disjoint $(u,v)$-paths in $Q$.

So assume that $u$ and $v$ belong to the same $H_r$, w.l.o.g. assume that $u_D=v_D=u$ by Remark \ref{Remark-copyofD}. Let $z^{\prime}$ be an out-neighbor of $u$ in $D$ and let $z\in H(z^{\prime})$ be arbitrary. By the above argument, there are $k$ arc-disjoint $(z,v)$-paths in $Q$, say $zw_{1j}\dots w_{tj}v, j\in[k]$. Let $z_1,\ldots, z_k$ be $k$ distinct vertices in $H(z^{\prime})$. Then $uz_jw_{1j}\cdots w_{tj}v$ $(j\in[k])$ are $k$ arc-disjoint $(u,v)$-paths in $Q$, which completes the proof.
\end{proof}

The following  alternative classification of semicomplete digraphs with good $(u,v)$-pairs is  more complicated than the  classification  given in Theorem \ref{thm:SDbranchchar} but it turns out to be much more useful in classifying semicomplete compositions with good $(u,v)$-pairs.

\begin{thm}\label{SDgoodpair}\cite{bangJGTsub}
Let $D$ be a semicomplete digraph and $u,v$ be arbitrary chosen vertices (possibly $u=v$). Then $D$ has a good $(u,v)$-pair if and only if $(D,u,v)$ satisfies none of the following conditions.

(i) $D$ is isomorphic to one of the digraphs in Figure \ref{fig-SDexceptions}.

(ii) $D$ is non-strong and either $u$ is not in the initial component of $D$ or  $v$ is not in the terminal component of $D$.

(iii) $D$ is strong and there exists an arc $e\in A(D)$ such that $u$ is not in the initial component of $D-e$ and $v$ is not in the terminal component of $D-e$.

(iv) $D$ is strong and there exists a partition $V_1,\ldots,V_{2\alpha+3}$ of $V(D)$ for some $\alpha\geq 1$ such that $v\in V_2, u\in V_{2\alpha+2}$ and all arcs between $V_i$ and $V_j$ with $i<j$ from $V_i$ to $V_j$ with the following exceptions. There exists precisely one arc from $V_{i+2}$ to $V_{i}$ for all $i\in[2\alpha+1]$  and it goes from the terminal component of $D\left\langle  V_{i+2}\right\rangle$ to the initial component of $D\left\langle  V_i\right\rangle$.
\end{thm}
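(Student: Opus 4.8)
The plan is to deduce Theorem~\ref{SDgoodpair} from the already-established Theorem~\ref{thm:SDbranchchar}, whose statement I take as given. Since Theorem~\ref{thm:SDbranchchar} says that $D$ has a good $(u,v)$-pair precisely when $D\notin$ Figure~\ref{fig-SDexceptions}(b)--(f) and, for every $z,w\in V$, there are arc-disjoint paths $P_{u,z},P_{w,v}$, it suffices to prove the equivalence of the two obstruction sets: $(D,u,v)$ satisfies at least one of (i)--(iv) if and only if $D$ is one of the digraphs in Figure~\ref{fig-SDexceptions}(b)--(f) \emph{or} some pair $z,w$ admits no arc-disjoint $P_{u,z},P_{w,v}$. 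The exceptional digraphs of Figure~\ref{fig-SDexceptions}(b)--(f) are exactly the members of (i) other than Figure~\ref{fig-SDexceptions}(a), and one checks directly that Figure~\ref{fig-SDexceptions}(a) (the single arc $uv$) already violates the path condition (take $z=v$, $w=u$, which would require two arc-disjoint $(u,v)$-paths). Thus the whole problem reduces to a structural characterization of \emph{when a semicomplete digraph fails to have arc-disjoint paths $P_{u,z},P_{w,v}$ for some choice of $z,w$}, and the heart of the proof is to show this failure is captured exactly by (ii), (iii) and (iv).

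For the easy direction I would verify that each of (i)--(iv) is a genuine obstruction to a good pair. Condition (i) is a finite check on the six digraphs. For (ii), if $u$ is not in the initial component then no vertex of the initial component is reachable from $u$, so $D$ has no out-branching rooted at $u$ at all; symmetrically, if $v$ is not terminal there is no in-branching at $v$. For (iii), suppose $(B_u^+,B_v^-)$ were a good pair; as they are arc-disjoint the arc $e$ lies in at most one of them, so either $B_u^+\subseteq D-e$ (forcing $u$ into the initial component of $D-e$) or $B_v^-\subseteq D-e$ (forcing $v$ into the terminal component of $D-e$), contradicting the choice of $e$. Condition (iv) is the delicate one: here I would run a counting argument on the backward arcs. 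Writing $U_i=V_1\cup\cdots\cup V_i$, the only arcs entering a down-set $U_i$ from outside are backward arcs, and the single backward arc $V_{j+2}\to V_j$ enters $U_i$ exactly for $i\in\{j,j+1\}$; since $u\in V_{2\alpha+2}$, an out-branching must enter each $U_i$ with $1\le i\le 2\alpha+1$, so it must use at least $\lceil(2\alpha+1)/2\rceil=\alpha+1$ backward arcs. The symmetric computation with the up-sets $W_i=V_i\cup\cdots\cup V_{2\alpha+3}$ shows an in-branching at $v\in V_2$ also needs at least $\alpha+1$ backward arcs. As only $2\alpha+1$ backward arcs exist and the two branchings must be arc-disjoint, $2(\alpha+1)>2\alpha+1$ yields a contradiction.

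For the converse I would prove the contrapositive: if $(D,u,v)$ satisfies none of (i)--(iv), then $D\notin$ Figure~\ref{fig-SDexceptions}(b)--(f) (immediate from the failure of (i)) and, for every $z,w$, arc-disjoint $P_{u,z},P_{w,v}$ exist, whence Theorem~\ref{thm:SDbranchchar} yields a good pair. Equivalently, I would show that whenever some $z,w$ admit no arc-disjoint $P_{u,z},P_{w,v}$, one of (ii)--(iv) must hold. The non-strong case is handled first: the unique acyclic order of the strong components shows that a reachability failure forces $u$ out of the initial component or $v$ out of the terminal component, i.e. (ii). In the strong case I would take an extremal pair $z,w$ with no arc-disjoint paths and apply an arc-version of Menger's theorem to obtain a cut witnessing the failure; if a single arc $e$ already witnesses it, this yields (iii), and otherwise one iterates, peeling off the successive tight one-arc cuts and assembling them into the nested partition $V_1,\ldots,V_{2\alpha+3}$ of (iv), checking at each step that the unique backward arc runs from the terminal component of $D\langle V_{i+2}\rangle$ to the initial component of $D\langle V_i\rangle$.

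The main obstacle is condition (iv), in both directions. The obstruction direction rests on the backward-arc count above, and its tightness ($\alpha+1$ needed on each side out of $2\alpha+1$ available) is exactly why the index $2\alpha+3$, the placement $v\in V_2$, $u\in V_{2\alpha+2}$, and the skip-by-two pattern of the backward arcs are forced. The sufficiency direction is harder still: one must show that a failure of arc-disjoint paths witnessed by no single arc necessarily organizes itself into this rigid path-like structure, with each $V_i$ strongly connected internally and each backward arc emanating from the terminal component and entering the initial component of the appropriate block. Getting the bookkeeping of these iterated cuts exactly right—so that the extracted partition has precisely the form demanded by (iv), with no spurious extra arcs between blocks—is the technical crux of the argument.
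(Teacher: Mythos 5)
Your reduction framework is legitimate, and your necessity direction is essentially complete and correct: the finite check for (i), the reachability argument for (ii), the ``$e$ lies in at most one of the two branchings'' argument for (iii), and in particular the covering argument for (iv) (each backward arc enters only two consecutive down-sets $U_i$, so $B^+_u$ needs at least $\alpha+1$ of the $2\alpha+1$ backward arcs, symmetrically for $B^-_v$, and $2(\alpha+1)>2\alpha+1$) are all sound. Your handling of Figure \ref{fig-SDexceptions}(a) against the path condition of Theorem \ref{thm:SDbranchchar} is also fine. It is worth noting that the paper under review never proves Theorem \ref{SDgoodpair} itself; it imports it from \cite{bangJGTsub}, the same source as Theorem \ref{thm:SDbranchchar}, so deducing one from the other is a reasonable strategy in principle.

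The genuine gap is the sufficiency direction: the claim that whenever some $z,w$ admit no arc-disjoint $P_{u,z},P_{w,v}$ in a strong semicomplete digraph, one of (iii) or (iv) must hold. This is not proven in your proposal; it is only described, and the description rests on a tool that does not apply. You invoke ``an arc-version of Menger's theorem to obtain a cut witnessing the failure'', but the non-existence of arc-disjoint paths $P_{u,z}$ and $P_{w,v}$ is a two-commodity condition (two distinct source--sink pairs), and Menger's theorem provides no cut characterization for such problems --- this is precisely why the arc-disjoint paths problem in semicomplete digraphs required a separate, substantial theorem in \cite{bangJCT51}. Moreover, even granting that one-arc cuts can be extracted (say via Lemma \ref{OutbranPath}-type statements), the assembly of iterated cuts into exactly the structure of (iv) --- blocks $V_1,\ldots,V_{2\alpha+3}$ with precisely one backward arc from $V_{i+2}$ to $V_i$ (skipping exactly two blocks, never one or three), each running from the terminal component of $D\langle V_{i+2}\rangle$ to the initial component of $D\langle V_i\rangle$, no other backward arcs anywhere, and $u,v$ located in the stated blocks --- is asserted in a single sentence (``one iterates, peeling off the successive tight one-arc cuts\ldots'') and never carried out. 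You yourself call this ``the technical crux'', which is accurate: it is where all the content of the theorem lives. As it stands, the proposal establishes only that (i)--(iv) are obstructions, not that they are the only ones, so the characterization is not proved.
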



\section{Formulating the main theorem}\label{sec:characterization}

 A tournament is \textbf{reverse-path transitive} if it is obtained from a transitive tournament by reversing the arcs of the unique hamiltonian path. We use $RT_n$ to denote a reverse-path transitive tournament with $n$ vertices.  Note that $RT_3=C_3$. Let $\overline{K}_r$ denote the digraph on $r$ vertices and no arcs.

\begin{definition}\label{typeAB}
Let $S$ be a semicomplete digraph and let $a,b$ be two arbitrary vertices (possibly $a=b$). The 3-tuple $(S,a,b)$ is said to be of

\textbf{type $A$}, for some $\alpha\geq 1$, if there exists a partition $V_1,\ldots, V_{2\alpha+1}$ of $V(S)$ such that $b\in V_2,a\in V_{2\alpha}$ and all arcs between $V_i$ and $V_j$ with $i<j$ from $V_i$ to $V_j$ with the following exceptions. There exists precisely one arc $x_iy_i$ from $V_{2\alpha+2-i}$ to $V_{2\alpha-i}$ for all $i\in[2\alpha-1]$ and it goes from the terminal component of $S\left\langle  V_{2\alpha+2-i}\right\rangle$ to the initial component of $S\left\langle  V_{2\alpha-i}\right\rangle$.

\textbf{type $B$}, for some $\beta\geq 1$, if there exists a partition $V_1,\ldots, V_{\beta+1}$ of $V(S)$ such that $b\in V_1,a\in V_{\beta+1}$ and all arcs between $V_i$ and $V_j$ with $i<j$ from $V_i$ to $V_j$ with the following exceptions. There exists precisely one arc $x_iy_i$ from $V_{\beta+2-i}$ to $V_{\beta+1-i}$ for all $i\in[\beta]$ such that there are two arc-disjoint $(y_{i-1},x_i)$-paths in $S\left\langle V_{\beta+2-i}\right\rangle$ when $i\geq 2$ and $y_{i-1}\neq x_{i}$. Further, $x_1$ belongs to the terminal component of $S\left\langle  V_{\beta+1}\right\rangle$ and $y_{\beta}$ belongs to the initial component of $S\left\langle  V_{1}\right\rangle$.
\end{definition}

\begin{figure}[H]
	\subfigure{\begin{minipage}[t]{0.48\linewidth}
			\centering\begin{tikzpicture}[scale=0.35]
				\foreach \i in {(5,0),(0,0),(-5,0)}{\draw[ line width=0.8pt] \i ellipse [x radius=50pt, y radius=70pt];}
				\coordinate [label=center:$V_1$] () at (-5,4);
				\coordinate [label=center:$V_2$] () at (-0,4);
				\coordinate [label=center:$V_3$] () at (5,4);
                                \draw[-stealth,line width=1.8pt] (-2,5) -- (2,5);
				\filldraw[black](0,1) circle (3pt)node[label=above:$a$](u){};
				\filldraw[black](0,-1.5) circle (3pt)node[label=above:$b$](v){};
				\filldraw[black](-5,-1.5) circle (3pt)node[label=above:$y_1$](y1){};
				\filldraw[black](5,-1.5) circle (3pt)node[label=above:$x_1$](x1){};
				
				\path[draw, line width=0.8pt] (x1) edge[bend left=30] (y1);
			\end{tikzpicture}\caption*{(a) type A with $\alpha=1$}\end{minipage}}~~~~
	\subfigure{\begin{minipage}[t]{0.48\linewidth}
			\centering\begin{tikzpicture}[scale=0.35]
				\foreach \i in {(-5,0),(0,0),(5,0)}{\draw[ line width=0.8pt] \i ellipse [x radius=50pt, y radius=70pt];}
				\coordinate [label=center:$V_1$] () at (-5,4);
				\coordinate [label=center:$V_2$] () at (0,4);
				\coordinate [label=center:$V_3$] () at (5,4);
                                \draw[-stealth,line width=1.8pt] (-2,5) -- (2,5);
				\filldraw[black](5,1) circle (3pt)node[label=above:$a$](u){};
				\filldraw[black](-5,1) circle (3pt)node[label=above:$b$](v){};
				\filldraw[black](-1,-1.5) circle (3pt)node[label=above:$y_1$](y1){};
				\filldraw[black](-5.5,-1.5) circle (3pt)node[label=above:$y_2$](y2){};
				\filldraw[black](5.5,-1.5) circle (3pt)node[label=above:$x_1$](x1){};
				\filldraw[black](1,-1.5) circle (3pt)node[label=above:$x_2$](x2){};
				
				\foreach \i/\j in {x1/y1,x2/y2}{\path[draw, line width=0.8pt] (\i) edge[bend left=35] (\j);}
			\end{tikzpicture}
			\caption*{(c) type B with $\beta=2$}\end{minipage}}\\
	\subfigure{\begin{minipage}[t]{1\linewidth}
			\centering\begin{tikzpicture}[scale=0.35]
				\foreach \i in {(-10,0),(-5,0),(0,0),(5,0),(10,0)}{\draw[ line width=0.8pt] \i ellipse [x radius=50pt, y radius=70pt];}
				\coordinate [label=center:$V_1$] () at (-10,4);
				\coordinate [label=center:$V_2$] () at (-5,4);
				\coordinate [label=center:$V_3$] () at (0,4);
				\coordinate [label=center:$V_4$] () at (5,4);
				\coordinate [label=center:$V_5$] () at (10,4);
                                \draw[-stealth,line width=1.8pt] (-2,5) -- (2,5);
				\filldraw[black](5,1) circle (3pt)node[label=above:$a$](u){};
				\filldraw[black](-5,1) circle (3pt)node[label=above:$b$](v){};
				\filldraw[black](-1,-1.5) circle (3pt)node[label=above:$y_1$](y1){};
				\filldraw[black](-5.5,-1.5) circle (3pt)node[label=above:$y_2$](y2){};
				\filldraw[black](-10.5,-1.5) circle (3pt)node[label=above:$y_3$](y3){};
				\filldraw[black](10.5,-1.5) circle (3pt)node[label=above:$x_1$](x1){};
				\filldraw[black](5.5,-1.5) circle (3pt)node[label=above:$x_2$](x2){};
				\filldraw[black](1,-1.5) circle (3pt)node[label=above:$x_3$](x3){};
				
				\foreach \i/\j in {x1/y1,x2/y2,x3/y3}{\path[draw, line width=0.8pt] (\i) edge[bend left=35] (\j);}
			\end{tikzpicture}\caption*{(c) type A with $\alpha=2$}\end{minipage}}
	\caption{Examples of 3-tuples $(S,a,b)$ of type A or type B. The bold arcs indicate that all possible arcs not shown from right  to left are present in the shown direction.}\label{fig6}
\end{figure}

The arcs $x_iy_i$ occurring in Definition \ref{typeAB} are  called  \textbf{backward} arcs of $S$.

\begin{table}[H]
	\caption{\textbf{A list of some semicomplete compositions without good $(u,v)$-pairs.} (See Figure \ref{fig-exceptions} for (a)-(e).) }
	\label{exceptions}
	\centering
	\begin{tabular}{ll}\hline
		(a) & $C_3[\overline{K}_2,\overline{K}_2,H]$ with $V(H)=\{u,v\}$ and $A(H)\subseteq\{vu\}$; \\ 
		(b) & $TT_3[u,\overline{K}_{n-2},v]$ or $TT_3[u,\overline{K}_{n-2},v]\cup\{vu\}$; \\
		(c) & $C_3[u,H,v]$, where $H$ is an arbitrary digraph with $|A(H)|\leq 1$; \\
		\makecell[l]{(d)\\~ } & \makecell[l]{$C_3[H(u),H(v),z]$ such that each vertex in $H(u)-u$ (resp., $H(v)-v$)\\ has exactly in-degree (resp., out-degree) one;} \\
		\makecell[l]{(e)\\~ } & \makecell[l]{$RT_4[\overline{K}_t,z,H(u),v]$ or $RT_5[H,\overline{K}_t,z,H(u),v]$, where $t\geq 1$, $H$ is an arbitrary\\ digraph and each vertex in $H(u)-u$ has exactly in-degree one;}\\
		\makecell[l]{(f)\\~ }& \makecell[l]{The strong semicomplete  composition  obtained from $RT_5[H,\overline{K}_1,z,H(u),v]$ by \\ reversing some arcs from $H$ to $\overline{K}_1$ or adding some arcs from $\overline{K}_1$ to $H$ (or both);}\\
		(g) & The digraph in Figure \ref{fig-SDexceptions} (e);\\
		\hline
	\end{tabular}
\end{table}
\begin{figure}[!ht]
	\centering
	\subfigure{\begin{minipage}[t]{0.45\linewidth}
			\centering\begin{tikzpicture}[scale=0.5]
				\draw[draw=black,line width=0.8pt] (-5,1.5) rectangle (-3,-1.5);\coordinate [label=center:$\overline{K}_2$] () at (-4,0);
				\draw[draw=black,line width=0.8pt] (-1,1.5) rectangle (1,-1.5);\coordinate [label=center:$\overline{K}_2$] () at (0,0);
				\draw[draw=black,line width=0.8pt] (3,1.5) rectangle (5,-1.5);\coordinate [label=center:$H$] () at (4,0.8);
				\filldraw[black](3.5,-0.2) circle (3pt)node[label=below:$u$](u){};
				\filldraw[black](4.5,-0.2) circle (3pt)node[label=below:$v$](v){};
				\filldraw[white](-4,1.6) circle (0.1pt)node[](a1){}; 
				\filldraw[white](-2.9,0) circle (0.1pt)node[](a2){}; 
				\filldraw[white](-1.1,0) circle (0.1pt)node[](b1){}; 
				\filldraw[white](1.1,0) circle (0.1pt)node[](b2){}; 
				\filldraw[white](4,1.6) circle (0.1pt)node[](c1){};
				\filldraw[white](2.9,0) circle (0.1pt)node[](c2){};
				
				\foreach \i/\j in {a2/b1,b2/c2}{\path[draw, line width=1.8pt] (\i) edge (\j);}
				\path[draw, red, dotted, line width=0.8pt] (v) edge (u);
				\path[draw, line width=1.8pt] (c1) edge[bend right=20] (a1);
			\end{tikzpicture}\caption*{(a)}\end{minipage}}
	\subfigure{\begin{minipage}[t]{0.45\linewidth}
			\centering\begin{tikzpicture}[scale=0.5]
				\draw[draw=black,line width=0.8pt] (-5,1.5) rectangle (-3,-1.5);\coordinate [label=center:$\overline{K}_{n-2}$] () at (-4,0);
				\filldraw[black](0,0) circle (3pt)node[label=below:$v$](v){};
				\filldraw[black](4,0) circle (3pt)node[label=below:$u$](u){}; 
				\filldraw[white](-2.9,0) circle (0.1pt)node[](a){};
				
				\path[draw, line width=1.8pt] (a2) edge (v);
				\path[draw, red, dotted, line width=0.8pt] (v) edge (u);
				\path[draw, line width=1.8pt] (u) edge[bend right=30] (a);
				\path[draw, line width=0.8pt] (u) edge[bend left=20] (v);
			\end{tikzpicture}\caption*{(b)}\end{minipage}}\\
	\subfigure{\begin{minipage}[t]{0.45\linewidth}
			\centering\begin{tikzpicture}[scale=0.5]
				\draw[draw=black,line width=0.8pt] (-5,1.5) rectangle (-3,-1.5);\coordinate [label=center:$H$] () at (-4,0.8);
				\filldraw[black](0,0) circle (3pt)node[label=below:$v$](v){};
				\filldraw[black](4,0) circle (3pt)node[label=below:$u$](u){}; 
				\filldraw[black](-4.5,0) circle (3pt)node(a1){};
				\filldraw[black](-3.5,0) circle (3pt)node(a2){};
				\filldraw[black](-4,-1) circle (3pt)node(a4){};
				\filldraw[white](-2.9,0) circle (0.1pt)node[](a3){};
				
				\path[draw, line width=1.8pt] (u) edge[bend right=30] (a);
				\path[draw, line width=1.8pt] (a3) edge (v);
				\path[draw, line width=0.8pt] (v) edge (u);
				\path[draw, red, dotted, line width=0.8pt] (a1) edge (a2);
			\end{tikzpicture}\caption*{(c)}\end{minipage}}
	\subfigure{\begin{minipage}[t]{0.45\linewidth}
			\centering\begin{tikzpicture}[scale=0.5]
				\draw[draw=black,line width=0.8pt] (-5,1.5) rectangle (-3,-1.5);\coordinate [label=center:$H(v)$] () at (-4,0.8);
				\draw[draw=black,line width=0.8pt] (3,1.5) rectangle (5,-1.5); \coordinate [label=center:$H(u)$] () at (4,-1);
				\filldraw[black](-4,-1) circle (3pt)node[label=right:$v$](v){};
				\filldraw[black](-4.5,0) circle (3pt)node(a1){};
				\filldraw[black](-3.5,0) circle (3pt)node(a2){};
				\filldraw[black](4.5,0) circle (3pt)node(a3){};
				\filldraw[white](-2.9,0) circle (0.1pt)node[](a4){};
				\filldraw[black](0,0) circle (3pt)node[label=below:$z$](z){};
				\filldraw[black](3.5,0) circle (3pt)node(b1){};
				\filldraw[black](4,1) circle (3pt)node[label=right:$u$](u){};
				\filldraw[white](2.9,0) circle (0.1pt)node[](b2){};
				
				\foreach \i/\j in {v/a2,v/a1,b1/u}{\path[draw, red, dotted, line width=0.8pt] (\i) edge (\j);}
				\foreach \i/\j in {a4/z,z/b2}{\path[draw, line width=1.8pt] (\i) edge (\j);}
				\path[draw, line width=1.8pt] (b2) edge[bend right=30] (a4);
			\end{tikzpicture}\caption*{(d)}\end{minipage}}\\
	\subfigure{\begin{minipage}[t]{0.5\linewidth}
			\centering\begin{tikzpicture}[scale=0.5]
				\draw[draw=black,line width=0.8pt] (-9,1.5) rectangle (-7,-1.5);\coordinate [label=center:$H$] () at (-8,0);
				\draw[draw=black,line width=0.8pt] (-5,1.5) rectangle (-3,-1.5);\coordinate [label=center:$\overline{K}_t$] () at (-4,0);
				\draw[draw=black,line width=0.8pt] (3,1.5) rectangle (5,-1.5); \coordinate [label=center:$H(u)$] () at (4,-1);
				\filldraw[white](-6.9,0) circle (0.1pt)node(a1){};
				\filldraw[white](-5.1,0) circle (0.1pt)node[](b1){};
				\filldraw[white](-2.9,0) circle (0.1pt)node[](b2){};
				\filldraw[black](0,0) circle (3pt)node[label=below:$z$](z){};
				\filldraw[black](3.5,0) circle (3pt)node[label=right:$u$](u){};
				\filldraw[black](4,1) circle (3pt)node(c1){};
				\filldraw[white](5.1,0) circle (0.1pt)node(c2){};
				\filldraw[white](2.9,0) circle (0.1pt)node(c3){};
				\filldraw[white](-4,2.5) circle (0.1pt)node(c4){};
				\filldraw[white](4,2.5) circle (0.1pt)node(c5){};
				\filldraw[black](8,0) circle (3pt)node[label=below:$v$](v){};
				
				\foreach \i/\j in {a1/b1,b2/z,z/c3,c2/v}{\path[draw, line width=1.8pt] (\i) edge (\j);}
				\path[draw, red, dotted, line width=0.8pt] (c1) edge (u);
				\path[draw, line width=2.4pt] (c5) edge (c4);
			\end{tikzpicture}\caption*{(e)}\end{minipage}}
%
		\caption{Semicomplete compositions with no good $(u,v)$-pair. The bold arcs indicate that all possible arcs are present in the shown direction, in particular, the bold arc in (e) from right to left indicates that all possible arcs between two non consecutive digraphs $H_i$s are from right to left.  The red dotted arcs indicate arcs that possibly not exist and the digraph $H$ is an arbitrary digraph (possibly empty). In (d)-(e), each vertex in $H(u)-u$ (resp., $H(v)-v$) has in-degree (resp., out-degree) one. The integer $t$ in (e) is at least one.}\label{fig-exceptions}
              \end{figure}

              \begin{lem}\label{specialexceptions}
	Let $Q$ be a composition of a semicomplete digraph and let $u,v$ be two vertices of $Q$. If $Q$ or $\overleftarrow{Q}$ is isomorphic to one of the digraphs in Table \ref{exceptions}, then there is no good $(u,v)$-pair in $Q$, where  $\overleftarrow{Q}$ is obtained from $Q$ be reversing all arcs and interchanging the names of $u$ and $v$.
\end{lem}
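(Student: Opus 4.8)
The plan is to first strip the reversal from the statement and then treat the seven families (a)--(g) one at a time, exhibiting in each case a concrete obstruction to a good $(u,v)$-pair.

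First I would record the symmetry that governs the role of $\overleftarrow{Q}$. If $(O,I)$ is a good $(u,v)$-pair in $Q$, then reversing every arc turns $O$ into an in-branching at $u$ and $I$ into an out-branching at $v$; relabelling $u$ and $v$ shows that $\overleftarrow{Q}$ has a good $(u,v)$-pair. Hence $Q$ has a good $(u,v)$-pair if and only if $\overleftarrow{Q}$ does, and it suffices to prove that no digraph $Q$ listed in Table~\ref{exceptions} has a good $(u,v)$-pair. I would also fix two standard necessary conditions to be used throughout: by Remark~\ref{Remark-uvatleast2} a good $(u,v)$-pair with $u\neq v$ forces $d^+(u)\ge 2$ and $d^-(v)\ge 2$; and, since the out-branching of a good pair contains a $(u,z)$-path and the in-branching a $(w,v)$-path that are arc-disjoint, a good $(u,v)$-pair yields arc-disjoint paths $P_{u,z},P_{w,v}$ for every choice of $z,w$ (this is condition (i) of Theorem~\ref{thm:SDbranchchar}).

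The families split according to the type of obstruction. Case (g) is immediate: the digraph of Figure~\ref{fig-SDexceptions}(e) is semicomplete and has no good $(u,v)$-pair by Theorem~\ref{thm:SDbranchchar}, equivalently Theorem~\ref{SDgoodpair}(i). The families (b), (d), (e), (f) I would handle by a \emph{mandatory-arc} argument. In each of them the hypotheses (a block that is an independent set, or the requirement that every vertex of $H(u)-u$ has in-degree one and every vertex of $H(v)-v$ has out-degree one) force certain vertices to possess a unique in-arc or a unique out-arc in $Q$; such an arc must lie in every out-branching rooted at $u$, respectively in every in-branching terminating at $v$. Tracking these forced arcs shows that the two branchings are driven to compete for a single arc, or for the too-small set of in-arcs of $v$ (out-arcs of $u$). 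For instance, in (b) all arcs $wv$ are forced into the in-branching, so $uv$ is the only in-arc of $v$ left for the out-branching; but the in-branching is also forced to route $u$ to $v$ through $uv$, so both branchings need $uv$, a contradiction. The optional arcs (the $vu$ in (b), the reversed or added arcs in (f)) do not affect this count, since they are incident to the root $u$ of the out-branching or to the sink $v$ of the in-branching and hence cannot be used there.

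The remaining families (a), (c) — together with the ``cores'' that survive after the mandatory arcs are removed in (d)--(f) — require a \emph{tight-counting plus acyclicity} argument, which I expect to be the main obstacle because it must rule out all spanning branchings at once rather than a single pair of paths. Writing $F_{ij}$ for the set of arcs from block $i$ to block $j$ of the underlying $C_3$, I would count how many arcs of each $F_{ij}$ any out-branching from $u$ and any in-branching at $v$ must use; for these digraphs one cut is \emph{tight}, i.e. the two branchings together must consume all of its arcs. Tightness forces the intersection of each branching with that cut to be a prescribed (near-)perfect matching, and following the resulting forced out-arcs then produces a directed cycle that avoids the sink $v$ of the in-branching (or a vertex unreachable from $u$ in the out-branching), which is impossible in a branching. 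For (a) this is exactly the cut $F_{AB}$ between the two copies of $\overline{K}_2$: its four arcs must split into two complementary perfect matchings, one per branching, and the arcs forced into the in-branching then close a $3$-cycle through $u$ that misses $v$. For (c) the single permitted arc inside $H$ is the crux: freeing an in-arc of $v$ for the out-branching forces this internal arc into the in-branching, while freeing an out-arc of $u$ for the in-branching forces the same internal arc into the out-branching, and these cannot both hold since $H$ has at most one arc and the two branchings are arc-disjoint. I would organise (d)--(f) so that, once the forced-degree-one vertices and the arcs they impose are accounted for, what remains is one of (a)--(c) or a member of Figure~\ref{fig-SDexceptions}, letting the corresponding obstruction be reused; checking that this reduction is legitimate — that the removed vertices genuinely cannot help and that the optional arcs of (f) are harmless — is the delicate bookkeeping on which I would spend the most care.
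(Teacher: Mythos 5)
Your overall architecture matches the paper's: reduce by the reversal symmetry, dispose of (g) by Theorem \ref{SDgoodpair}(i), and kill the remaining families by tracking arcs that are forced into one of the two branchings. Your arguments for (a), (b), (c) are correct, though organized differently from the paper, which handles (a)--(c) in a single stroke via the two internally disjoint $(u,v)$-paths $P_1\subseteq B_u^+$, $P_2\subseteq B_v^-$ and the extra out-arc needed by the successor of $u$ on $P_1$, respectively the extra in-arc needed by the predecessor of $v$ on $P_2$; your tight-cut/complementary-matching argument for (a) and your arc-freeing count for (c) are sound alternatives, and the paper, like you, treats (d)--(f) by mandatory arcs.

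The genuine gap is in family (f). You dismiss its optional arcs on the grounds that they are ``incident to the root $u$ of the out-branching or to the sink $v$ of the in-branching and hence cannot be used there.'' That premise is true for the optional arc $vu$ in (a) and (b), but false for (f): the reversed or added arcs there run between the blocks $\overline{K}_1$ and $H$, which contain neither $u$ nor $v$, and nothing prevents them from lying in either branching (indeed $B_u^+$ may well use such an arc to reach a vertex of $H$). So the step on which your case (f) rests is unjustified, and the case collapses as written. The correct reason these arcs are harmless is a cut observation: reversing arcs from $H$ to $\overline{K}_1$ or adding arcs from $\overline{K}_1$ to $H$ creates no new arc leaving the set $V(H)\cup V(\overline{K}_1)$, so the arc from $\overline{K}_1$ to $z$ remains the unique arc out of that set; since every vertex of the set must reach $v$ inside $B_v^-$, that arc is still forced into the in-branching, after which the forcing chain runs exactly as in (e): $zu,uv\in B_v^-$, then $B_u^+$ must enter $z$ through $vz$ and enter $v$ either through $uv$ (an arc used twice) or through some $wv$ with $w\in H(u)-u$, which closes the cycle $w\to v\to z\to w$ in $B_u^+$ because $zw$ is forced into $B_u^+$. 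Relatedly, your fallback plan of reducing (d)--(f) to a ``core'' that is one of (a)--(c) or a digraph of Figure \ref{fig-SDexceptions} does not hold up literally: the core of (d) is the $3$-cycle $u\to v\to z\to u$ with both roots on it, and the cores of (e)--(f) still contain the blocks $\overline{K}_t$ and $H$; none of these appears in those lists with the required root placement, so these cases must be finished by the direct forced-arc contradiction just described, which is what the paper does.
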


\begin{proof} Observe that by symmetry and Theorem \ref{SDgoodpair} (i), we only need to consider the case that $Q$ is isomorphic to one of the digraphs shown in Table \ref{exceptions} (a)-(f). Suppose that there is a good $(u,v)$-pair $(B_{u}^+,B_{v}^-)$ in $Q$.
	
	For the case that $Q$ is isomorphic to one of the digraphs in (a)-(c), let $P_1$ (resp., $P_2$) be the $(u,v)$-path in $B_{u}^+$ (resp., $B_{v}^-$). Let $u^+$ be the successor of $u$ on $P_1$ and let $v^-$ be the predecessor of $v$ on $P_2$. Observe that $P_1$ and $P_2$ are internally vertex-disjoint and $1\leq |A(P_i)|\leq 3$ for each $i$. If $u^+\neq v$, then the vertex $u^+$ needs an out-arc in $Q-P_1$ to ensure that $u^+$ can be collected in $B_{v}^-$. In the same way, if $v^-\neq u$, then $v^-$ needs an in-arc in $Q-P_2$ to ensure that $v^-$ can be collected in $B_{u}^+$.  Further, if both of the in- and out-arcs exist, then they should be distinct. However, it's impossible as either the only possible arc is $u^+v^-$ or  one of $u^+$ and $v^-$ has no such arc.
	
	Observe that for the digraphs in (d)-(f) the vertex $z$ is the only in-neighbor of every vertex in $H(u)-u$. So all arcs from $z$ to $H(u)-u$ must belong to the out-branching $B_{u}^+$. For the case that $Q=C_3[H(u),H(v),z]$ (the digraph in (d)), as $z$ is also the only out-neighbor of every vertex in $H(v)-v$, all arcs from $H(v)-v$ to $z$ must belong to $B_{v}^-$ and then $uvz$ and $zuv$ should be in the out- and in-branching of $Q$, respectively. Then we get a contradiction as the arc $uv$ is used twice in $(B_{u}^+,B_{v}^-)$. In the case that $Q$ is isomorphic to the digraph in (e) or (f), to collect the vertices of $\overline{K}_t$ to $B_{v}^-$, all arcs from $\overline{K}_t$ to $z$ must belong to the in-branching. Moreover, since all arcs from $z$ to $H(u)-u$ must belong to the out-branching $B_{u}^+$, we have $zu,uv\in B_{v}^-$. Then we obtain a contradiction again as either $uv$ or one of the  arcs from $\overline{K}_t$ to $z$ is used twice in $(B_{u}^+,B_{v}^-)$. 
\end{proof}

              Now we can state the main result of the paper. Recall that for any vertex $v\in V(Q)$, $v_S$ is the vertex in $S$ which  $v$  corresponds to.

\begin{thm}\label{mainthm}
	Let $S$ be a strong semicomplete digraph of order $s\geq 2$ and let $H_1,\ldots,H_s$ be arbitrary digraphs. Suppose that $Q=S[H_1,\ldots,H_s]$ and $u, v$ are two arbitrary vertices of $Q$ such that $d_{Q}^+(u)\geq 2$ and $d_{Q}^-(v)\geq 2$ if $u\neq v$. Then $Q$ has a good $(u,v)$-pair if and only if it satisfies none of the following conditions.
	
	(i) $Q$ or $\overleftarrow{Q}$ is isomorphic to one of the digraphs in Table \ref{exceptions}, where $\overleftarrow{Q}$ is obtained from $Q$ be reversing all arcs and interchanging the names of $u$ and $v$. 	
	
	(ii) $(S,u_S,v_S)$ is of type A and for each backward arc $xy$, either $|H(x)|=|H(y)|=1$ or, $d_Q^+(w)=1$ for every $w\in H(x)$ if $|H(x)|\geq 2$ and $d_Q^-(w)=1$ for every $w\in H(y)$ if $|H(y)|\geq 2$.
	
	(iii) $(S,u_S,v_S)$ is of type B and there exists a backward arc $xy$ such that either $|H(x)|=|H(y)|=1$ or, $d_Q^+(w)=1$ for every $w\in H(x)$ if $|H(x)|\geq 2$ and $d_Q^-(w)=1$ for every $w\in H(y)$ if $|H(y)|\geq 2$.
\end{thm}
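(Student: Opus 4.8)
The plan is to prove the two implications of the characterization separately. Necessity of condition (i) is exactly Lemma~\ref{specialexceptions}, so for the ``only if'' direction it remains to rule out a good $(u,v)$-pair when (ii) or (iii) holds. Sufficiency amounts to constructing a good pair under the hypothesis that none of (i)--(iii) holds, and this is where Theorem~\ref{SDgoodpair} and the ``almost good pair'' produced in Section~\ref{sec:almostgoodpairSD} do the heavy lifting. Throughout I work with the layered picture of Definition~\ref{typeAB}: the blow-up in $Q$ of a backward arc $x_iy_i$ of $S$ is the complete bundle of all arcs from $H(x_i)$ to $H(y_i)$, and these bundles are the only arcs of $Q$ that run from a higher layer to a lower one.

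For necessity of (ii) and (iii) I would argue that an out-branching rooted at $u=u_S$ can reach the lowest layers, and an in-branching rooted at $v=v_S$ can drain the highest layers toward $v$, only by passing through the backward bundles; hence both branchings are forced to consume arcs of these bundles. The degeneracy hypotheses are precisely the statement that the relevant bundle has no spare capacity: a single-arc bundle ($|H(x)|=|H(y)|=1$) obviously cannot be shared, while the degree-one conditions force every vertex of the expanded endpoint to spend its unique out-arc (resp. in-arc) inside the bundle, again using up all copies. A short counting argument then shows that a degenerate bundle must be used by both branchings, contradicting arc-disjointness. The difference in the two quantifiers mirrors the geometry of the two types: in type~A the backward arcs jump two layers and the intervening layer offers a detour, so a single non-degenerate bundle already absorbs the unique unavoidable conflict and one needs ``for each $\ldots$ degenerate'' to block a pair; in type~B the backward arcs join consecutive layers into a chain held together by the two arc-disjoint $(y_{i-1},x_i)$-paths, so one degenerate link already severs the chain and ``there exists $\ldots$ degenerate'' suffices to block a pair.

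For sufficiency I would assume none of (i)--(iii) holds and build a good pair. The first move is a reduction through Lemma~\ref{(D-X)->D}: since $S$ is strong, every vertex of a block $H_i$ has both an in- and an out-neighbour outside $H_i$, so superfluous block vertices may be removed now and re-attached at the very end; this lets me pass to a canonical composition in which the blocks are small, while Lemmas~\ref{Qkstr-Qprikstr} and~\ref{Dstr-Qkstr} keep the arc-strong connectivity under control. If the reduced $Q$ is $2$-arc-strong and is not one of the four sporadic digraphs excluded in Theorem~\ref{SCarcdecom}, then $Q$ has a strong arc decomposition and hence a good $(u,v)$-pair for every choice of $u,v$; the four sporadic digraphs are finite and are settled by inspection (each is covered by (i) or exhibits a good pair directly). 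This disposes of all sufficiently well connected compositions.

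The substantive case is that the reduced $Q$ is not $2$-arc-strong, so $Q$ has a one-arc-cut. Because $S$ is strong I would show that such a cut can only arise from the layered shape of $S$ together with small blocks, and that its presence forces $(S,u_S,v_S)$ to be of type~A or type~B; otherwise the copy of $S$ inside $Q$ (Remark~\ref{Remark-copyofD}) already has a good $(u,v)$-pair by Theorem~\ref{SDgoodpair}, and this lifts to $Q$ by re-attaching the deleted vertices. I am thus left with the genuinely hard situation: $(S,u_S,v_S)$ is of type~A or type~B, but, because (ii)/(iii) fail, there is a non-degenerate backward bundle in type~A and every backward bundle is non-degenerate in type~B. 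Here I would take the almost good pair of $S$ furnished by Section~\ref{sec:almostgoodpairSD}, namely an out-branching from $u$ and an in-branching to $v$ sharing only backward arcs, and reroute the shared arcs through the spare copies provided by the non-degenerate bundles, using the two-layer detour in type~A and the chained traversal in type~B identified in the necessity argument. I expect the main obstacle to be exactly this rerouting, to which most of the structural lemmas of Section~\ref{sec:proof} are devoted: one must verify that in type~A a single free copy can simultaneously soak up the one unavoidable conflict while the detours keep the two branchings arc-disjoint, and that in type~B the free copies at every link can be threaded together consistently; both require a careful layer-by-layer account of which arcs each branching consumes, followed by a final application of Lemma~\ref{(D-X)->D} to reinstate the removed block vertices.
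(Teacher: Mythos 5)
Your plan follows the same route as the paper's proof: necessity via Lemma~\ref{specialexceptions} plus the bundle-sharing argument, and sufficiency by settling the $2$-arc-strong case through strong arc decompositions (Theorem~\ref{SCarcdecom}), reducing to the situation where $S$ itself has no good $(u_S,v_S)$-pair, invoking the almost-good pairs of Lemma~\ref{noGP}, rerouting the shared backward arcs through spare copies, and lifting with Lemma~\ref{(D-X)->D}. However, it has two genuine gaps. The first is the case of two distinct roots in the same block, i.e.\ $u\neq v$ with $H(u)=H(v)$. Then $u_S=v_S$, so no copy of $S$ inside $Q$ contains both roots, and the best $S$ can offer is a good pair rooted at the single vertex $u_S$; converting that into a good $(u,v)$-pair of $Q$ is not a re-attachment via Lemma~\ref{(D-X)->D} but a separate and delicate rerouting argument, and it is precisely where the hypotheses $d^+_Q(u)\geq 2$ and $d^-_Q(v)\geq 2$ are consumed (the paper's Lemmas~\ref{uneqvsameQ} and~\ref{uneqvsameQ1}). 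Your proposal silently assumes throughout that $u=v$ or $H(u)\neq H(v)$.

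The second gap lies in the step you defer as ``rerouting''. The expectation that a non-degenerate bundle always lets the two branchings avoid each other is false: the digraphs of Table~\ref{exceptions}~(d)--(f) have $(S,u_S,v_S)$ of type B with $\beta=1$, possess a bundle whose copies have out-degree at least two (so (ii)/(iii) fail), and still admit no good $(u,v)$-pair. Consequently the sufficiency argument cannot be a pure construction; one must prove a forcing statement --- if every rerouting attempt fails, then $Q$ or $\overleftarrow{Q}$ is isomorphic to one of the digraphs in Table~\ref{exceptions}~(d)--(f) --- and only then does the assumed failure of (i) close the case. This classification (the paper's Lemma~\ref{shareonearc} and Corollary~\ref{shareonearcsymmetry}) is the technical core of the proof, and the idea is absent from your plan. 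Two smaller soundness issues point the same way: when $S$ has no good $(u_S,v_S)$-pair it need not be of type A or B, since it may be one of the exceptional digraphs of Figure~\ref{fig-SDexceptions}(c)--(f), a case the paper must separately relate to Table~\ref{exceptions}~(b),(c),(g); and your opening move of shrinking all blocks at once is only one-directional, because Lemma~\ref{(D-X)->D} lifts good pairs from the subdigraph to $Q$ but never back, so if the shrunken digraph has no good pair (it can, for instance, become one of the digraphs of Table~\ref{exceptions}~(a) even when $Q$ has a good pair) you can conclude nothing about $Q$.
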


\section{Almost good pairs in semicomplete digraphs}\label{sec:almostgoodpairSD}

In order to use Theorem \ref{SDgoodpair} in our proofs, we need the following refinement. Note that if $(S,a,b)$ is of type A (resp., type B), then any pair of branchings $B_{a}^+$ and $B_{b}^-$ in $S$ must share at least one backward arc (resp., all backward arcs) of $S$.

\begin{lem} \label{noGP}
	Let $S$ be a strong semicomplete digraph and let $a,b$ be two arbitrary vertices (possibly $a=b$). Suppose that $S$ has no good $(a,b)$-pair and it is not isomorphic to one of the digraphs in Figure \ref{fig-SDexceptions} (c)-(f) with $u=a,v=b$. Then one of the following statements holds.
	
	(I) $(S,a,b)$ is of type A and for each backward arc $x_ry_r$ $S$ has a pair of branchings $B_{a}^+,B_{b}^-$ such that $A(B_{a}^+)\cap A(B_{b}^-)=\{x_ry_r\}$.
	
	(II) $(S,a,b)$ is of type B and $S$ has  a pair of branchings $B_{a}^+,B_{b}^-$ such that the intersection of their arc sets is exactly the set of backward arcs of $S$, that is, $A(B_{a}^+)\cap A(B_{b}^-)=\{x_1y_1,\ldots,x_{\beta}y_{\beta}\}$.
\end{lem}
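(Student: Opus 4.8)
The plan is to use Theorem \ref{SDgoodpair} as the engine and then upgrade its obstruction conditions into the stronger ``almost good pair'' statement. Since $S$ is strong, condition (ii) of Theorem \ref{SDgoodpair} is vacuous; and since the digraphs in Figure \ref{fig-SDexceptions}(a),(b) are non-strong while (c)-(f) are excluded by hypothesis, $S$ is not isomorphic to any digraph in Figure \ref{fig-SDexceptions}, so condition (i) fails as well. Hence the absence of a good $(a,b)$-pair forces condition (iii) or (iv). The first task is to read off the type A / type B partition from these. Condition (iv) is, after the relabeling $(V_1,\dots,V_{2\alpha+3})\mapsto(V_1,\dots,V_{2(\alpha+1)+1})$, literally the assertion that $(S,a,b)$ is of type A with parameter $\alpha+1$: both have skip-one backward arcs $V_{k+2}\to V_k$, root $a\in V_{2\alpha+2}$ and sink $b\in V_2$, each running from a terminal component to an initial component. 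Thus the only genuinely new structural work is to show that condition (iii) forces type A (with $\alpha=1$) or type B.

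For that step, write $e$ for the special arc. That $a$ lies outside the initial component of $S-e$ means some nonempty $X\subseteq V(S)\setminus\{a\}$ has $d^-_{S-e}(X)=0$, hence $d^-_S(X)=1$ with $e$ its unique entering arc; symmetrically $b$ outside the terminal component gives $Y\subseteq V(S)\setminus\{b\}$ with $d^+_S(Y)=1$ and unique leaving arc $e$, so $e$ runs from $Y$ to $X$. I would then iterate these minimum in- and out-cuts, using strong connectivity of $S$ to force each emerging block to be re-entered, thereby refining $V(S)$ into consecutive blocks across which all cross arcs point forward except for single backward arcs. Whether successive backward arcs skip a block (the two-chain, type A picture) or join consecutive blocks (the type B picture) determines which type we land in; the placement of $a,b$ and the requirement that each backward arc $x_iy_i$ run from a terminal to an initial component should fall out of taking the cuts innermost/outermost, and the two arc-disjoint $(y_{i-1},x_i)$-paths demanded inside a type B block are exactly the strong connectivity surviving in that block.

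With the partition in hand I would build the branchings. In the type A case the backward arcs split into an even chain $V_{2\alpha}\to V_{2\alpha-2}\to\cdots$ and an odd chain $V_{2\alpha+1}\to V_{2\alpha-1}\to\cdots$; the out-branching from $a$ need only descend far enough to reach the parts having no alternative in-arc, and the in-branching to $b$ need only let the top parts escape, and these two forced demands can be served on opposite chains. Exploiting this flexibility (together with forward arcs as alternative routes, and with within-part pieces supplied by out-branchings of initial components and in-branchings of terminal components, which exist since each $S\langle V_i\rangle$ is semicomplete) I can arrange that the two branchings meet in \emph{exactly} one prescribed backward arc $x_ry_r$. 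In the type B case every backward arc is forced into both branchings, as noted before the lemma, so the task is instead to route both branchings through each block $V_{\beta+2-i}$ from its entry $y_{i-1}$ to its exit $x_i$ without sharing a non-backward arc; this is precisely where the two arc-disjoint $(y_{i-1},x_i)$-paths are spent, one feeding $B_a^+$ and the other $B_b^-$.

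The main obstacle I expect is the structural step from condition (iii): converting a single-arc obstruction into the full block decomposition with the correct parity, with the endpoints $x_i,y_i$ sitting in the prescribed terminal/initial components, and with a clean dichotomy between the skip-one (type A) and consecutive (type B) patterns. The construction step is comparatively mechanical, but even there the accounting needed to certify that the shared arc set is \emph{exactly} the prescribed backward arc(s) --- neither more nor fewer --- will require care in the degenerate cases $a=b$, blocks of size one, and $y_{i-1}=x_i$ (where the arc-disjoint path condition is vacuous and the routing through that block collapses).
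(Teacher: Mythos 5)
Your reduction is sound and is exactly the paper's: strongness of $S$ kills conditions (i)--(ii) of Theorem \ref{SDgoodpair} (since Figure \ref{fig-SDexceptions}(a),(b) are non-strong and (c)--(f) are excluded by hypothesis), and condition (iv) is indeed, after relabeling, type A with parameter $\alpha+1$. The genuine gap is in your type A construction. The lemma requires, for \emph{each} backward arc $x_ry_r$, a pair of branchings whose intersection is exactly $\{x_ry_r\}$, so in particular \emph{both} branchings must contain the prescribed arc. Your mechanism --- ``these two forced demands can be served on opposite chains'' --- is structurally wrong: the out-branching's forced arc is $x_{2\alpha-1}y_{2\alpha-1}$ (the only arc entering $V_1$) and the in-branching's forced arc is $x_1y_1$ (the only arc leaving $V_{2\alpha+1}$), and these lie on the \emph{same} (odd) chain, at its two ends. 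Moreover, if the two branchings could be routed on disjoint arc sets they would form a good $(a,b)$-pair, contradicting the hypothesis; and the natural near-disjoint routing (even chain plus the bottom odd arc for $B_a^+$, odd chain for $B_b^-$) realizes the prescribed intersection only for the extremal arcs $r=2\alpha-1$ (and symmetrically $r=1$), not for intermediate $r$. The missing idea, which is the heart of the paper's case (I), is that both the $(a,y_{2\alpha-1})$-path inside $B_a^+$ and the $(x_1,b)$-path inside $B_b^-$ must \emph{switch parity at index $r$}: for $r$ even, one path uses the even-indexed arcs $x_2y_2,\ldots,x_ry_r$ followed by the odd-indexed arcs $x_{r+1}y_{r+1},\ldots,x_{2\alpha-1}y_{2\alpha-1}$, while the other uses the complementary family together with $x_ry_r$, so that the two paths meet in $x_ry_r$ and nothing else.

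Your handling of condition (iii) is also thinner than what the final construction needs. The clean dichotomy is not ``skip-one versus consecutive backward arcs'' but: if $S-e$ contains an $(a,b)$-path, then $a,b$ avoid both end components of $S-e$ and you get type A with $\alpha=1$ immediately; otherwise $a$ sits strictly above $b$, and the two-block split $W_1,W_2$ is refined not by re-cutting around $e$ but by repeatedly invoking Lemma \ref{OutbranPath} (an end block splits whenever it lacks an arc-disjoint path/branching pair rooted at its entry or exit vertex) and Menger's theorem (a middle block splits whenever it lacks two arc-disjoint $(y_{i-1},x_i)$-paths). Termination of this refinement is precisely what supplies the ingredients your type B construction then consumes: an out-branching of $S\langle V_1\rangle$ rooted at $y_\beta$ arc-disjoint from a $(y_\beta,b)$-path, and an in-branching of $S\langle V_{\beta+1}\rangle$ rooted at $x_1$ arc-disjoint from an $(a,x_1)$-path. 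Your sketch routes the middle blocks correctly but never secures these end-block pairs, without which $B_a^+$ and $B_b^-$ cannot be completed with intersection exactly the set of backward arcs.
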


\begin{proof}
	Apply Theorem \ref{SDgoodpair} to the  strong semicomplete digraph $S$ with $u=a,v=b$. By the assumption that $S$ has no good $(a,b)$-pair and it is not isomorphic to one of the digraphs in Figure \ref{fig-SDexceptions} (c)-(f), we may assume that $(S,a,b)$ satisfies one of the statements (iii) and (iv) of Theorem \ref{SDgoodpair}.
	
	Suppose first that statement (iv) of Theorem \ref{SDgoodpair} holds. Then $(S,a,b)$ is of type A with $\alpha\geq 2$. We claim that for each backward arc $x_ry_r$, there exist branchings $B_{a}^+, B_{b}^-$ in $S$ such that they share only the arc $x_ry_r$, which implies (I). Let $V_1,\ldots, V_{2\alpha+1}$ be the partition of $V(S)$ and let $B$ be the set of backward arcs of $S$, that is, $B=\{x_1y_1,x_2y_2,\ldots,x_{2\alpha-1}y_{2\alpha-1}\}$.
	$$\mbox{Let }A=\{x_{2}y_{2},x_{4}y_{4},\ldots,x_{r}y_{r},x_{r+1}y_{r+1},x_{r+3}y_{r+3},\ldots, x_{2\alpha-1}y_{2\alpha-1}\}\mbox{ when } r \mbox{ is even }$$  $$\mbox{and }A=\{x_{2}y_{2},x_{4}y_{4},\ldots,x_{r-1}y_{r-1}, x_{r}y_{r},x_{r+2}y_{r+2},\ldots, x_{2\alpha-1}y_{2\alpha-1}\}\mbox{ when } r \mbox{ is odd,}$$ in particular,  $A=\{x_{1}y_{1},x_{3}y_{3},\ldots, x_{2\alpha-1}y_{2\alpha-1}\}$ when $r=1$ and  $A=\{x_2y_2,x_{3}y_{3},x_{5}y_{5},\ldots, x_{2\alpha-1}y_{2\alpha-1}\}$ when $r\in\{2,3\}$. Let $P_{a,y_{2\alpha-1}}$ be an $(a,y_{2\alpha-1})$-path containing an $(a,x_2)$-path in $V_{2\alpha}$ and all arcs in $A$ and let $P_{x_{1},b}$ be an $(x_{1},b)$-path containing a $(y_{2\alpha-2},b)$-path in $V_{2}$ and all arcs in $(B-A)\cup\{x_ry_r\}$.  Furthermore, we can construct these two paths (by linking subpaths and arcs in $A$ or $(B-A)\cup \{x_ry_r\}$ with some shortest $(y_i,x_{i+2})$-paths or arcs in $\{y_ix_{i+1}, x_{i+1}x_i,y_{i+1}y_i\}$) such that they only intersect in the arc $x_ry_r$. Now we can get a wanted pair $(B_{a}^+, B_{b}^-)$ as follows. Construct $B_{a}^+$ from an out-branching $B_{y_{2\alpha-1},S\left\langle  V_{1}\right\rangle}^+$ rooted at $y_{2\alpha-1}$ in $S\left\langle  V_{1}\right\rangle$ and the path $P_{a,y_{2\alpha-1}}$ by adding arcs $\{y_{2\alpha-1}z:z\in V(S)-V_1-V(P_{a,y_{2\alpha-1}})\}$ and, construct $B_{b}^-$ from an in-branching $B_{x_{1},S\left\langle  V_{2\alpha+1}\right\rangle}^-$ and the path $P_{x_{1},b}$ by adding arcs $\{zx_{1}:z\in V(S)-V_{2\alpha+1}-V(P_{x_1,b})-y_{2\alpha-1}\}$ and adding the arc $y_{2\alpha-1}a$ if $y_{2\alpha-1}\notin P_{x_1,b}$.

	It remains to consider the case that the statement (iii) of Theorem \ref{SDgoodpair} holds, that is, there exists an arc $xy\in A(S)$ such that $a\notin U_1$ and $b\notin U_t$, where  $U_1,\ldots,U_t$ is an acyclic ordering of the strong components of $S-xy$. Suppose first that there is an $(a,b)$-path $P_{a,b}$ in $S-xy$ (recall that possibly $a=b$), then clearly, $a,b\notin U_1\cup U_t$. We construct $B_{a}^+$ from an out-branching $B_{y,U_{1}}^+$ of $U_1$ and path $axy$ by adding arcs $\{yz:z\in V(S)-U_1-\{a,x\}\}$ and construct $B_{b}^-$ from an in-branching $B_{x,U_{t}}^-$ of $U_t$ and the path $xyP_{a,b}$ by adding arcs $\{zx:z\in V(S)-P_{a,b}-U_t-y\}$. Clearly, $B_{a}^+$ and $B_{b}^-$ are branchings and they share exactly one arc $xy$. Thus (I) holds with partition $V_1=V(U_1),V_3=V(U_t)$ and $V_2=V(S)-V_1-V_3$.
	
	Hence we may assume that there is no $(a,b)$-path in $S-xy$, which means that $a\in U_i,b\in U_j$ with $i>j$. Let $W_1=V(U_1\cup\cdots\cup U_j)$ and $W_2=V(S)-W_1$. Then $a\in W_2,b\in W_1$ and since $S$ is strong, $x$ belongs to the terminal component of $S\left\langle  W_2\right\rangle$ and $y$ belongs to the initial component of $S\left\langle  W_1\right\rangle$. It is not difficult to check that (II) holds with partition $W_1$ and $W_2$ if there is a pair of arc-disjoint $(y,b)$-path and out-branching rooted at $y$ in $S\left\langle W_1\right\rangle$ and,  a pair of arc-disjoint $(a,x)$-path and in-branching rooted at $x$ in $S\left\langle W_2\right\rangle$. 
	
	Suppose that $S\left\langle W_1\right\rangle$ has no pair of arc-disjoint $(y,b)$-path and an out-branching rooted at $y$ in $S\left\langle W_1\right\rangle$. By Lemma \ref{OutbranPath}, there is a partition $W,W_1-W$ of $W_1$ such that $b\in W,y\in W_1-W$ and only one arc from $W_1-W$ to $W$. Let $y^{\prime}$ be the head of the arc entering $W$. Again, if there is no arc-disjoint $(y^{\prime},b)$-path and out-branching rooted at $y^{\prime}$ in $S\left\langle  W\right\rangle$, there is a similar partition of $W$. By symmetry, we can apply a similar argument on $S\left\langle  W_2\right\rangle$. Repeating this, one can obtain a partition $V_1,\ldots,V_{\beta+1}$ of $V(S)$ with $b\in V_1, a\in V_{\beta+1}$ such that there exists precisely one arc $x_iy_i$ from $V_{\beta+2-i}$ to $V_{\beta+1-i}$ for all $i\in[\beta]$ and there is a pair of arc-disjoint $P_{y_{\beta},b}$ and $B_{y_{\beta},S\left\langle V_1\right\rangle}^+$ in $S\left\langle V_1\right\rangle$ and a pair of arc-disjoint $P_{a,x_1}$ and $B_{x_1,S\left\langle V_{\beta+1}\right\rangle}^+$ in $S\left\langle V_{\beta+1}\right\rangle$. It should be noted that we may further assume that there are two arc-disjoint $(y_{i-1},x_i)$-paths $P_{y_{i-1},x_i}$ and $P_{y_{i-1},x_i}^{\prime}$ in $S\left\langle V_{\beta+2-i}\right\rangle$ when $y_{i-1}\neq x_{i}$ since otherwise by Menger's theorem one can partition $V_{\beta+2-i}$ into $V$ and $V^{\prime}$ such that $y_{i-1}\in V, x_i\in V^{\prime}$ and there is only one arc from $V$ to $V^{\prime}$. This means we can assume that $(S,a,b)$ is of type B with partition $V_1,\ldots,V_{\beta+1}$.
	
	Then one can obtain a wanted branchings $B_{a}^+, B_{b}^-$ as follows. Construct $B_{a}^+$ from the path $P_{a,x_1}$, the out-branching $B_{y_{\beta},S\left\langle V_1\right\rangle}^+$ of $S\left\langle V_1\right\rangle$ and every path $P_{y_{i-1},x_i}$ with $y_{i-1}\neq x_i$ by adding all backward arcs $x_iy_i$ and all arcs from $y_{\beta}$ to uncovered vertices. In the same way, construct $B_{b}^-$ from $P_{y_{\beta},b}$, the  in-branching $B_{x_1,S\left\langle V_{\beta+1}\right\rangle}^+$ and every path $P_{y_{i-1},x_i}^{\prime}$ with $y_{i-1}\neq x_i$ by adding all backward arcs and all arcs from uncovered vertices to $x_{1}$, which implies (II).
\end{proof}

\section{Good pairs in compositions of strong semicomplete digraphs}\label{sec:proof}

Let $S$ be a strong semicomplete digraph of order $s\geq 2$ and let $H_1,\ldots,H_s$ be arbitrary digraphs. Suppose that $Q=S[H_1,\ldots,H_s]$ and $u, v$ are two arbitrary vertices of $Q$. Recall that  for a vertex $x\in Q$, $H(x)$ is the digraph $H_i$ containing the vertex $x$ and, $x_S$ is the vertex in $S$ which $x$ corresponds to. By Remark \ref{Remark-copyofD}, we assume that the following statement holds.
 
\begin{remark}\label{Remark-uSvS}
	If $u=v$ or $H(u)\neq H(v)$, then $u_S=u,v_S=v$ and, if $H(u)=H(v)$ and $u\neq v$, then $u_S=v_S=u$.
      \end{remark}

      \subsection{{\bf First case: $Q$  is 2-arc-strong}}
We start by giving a characterization of 2-arc-strong compositions $Q$ with a good $(u,v)$-pair. 

\begin{lem}\label{2arcstrong}
	Let $S$ be a strong semicomplete digraph on $s\geq 2$ vertices and let $H_1,\ldots,H_s$ be arbitrary digraphs. Suppose that $Q=S[H_1,\ldots,H_s]$ is 2-arc-strong and $u,v$ are two arbitrary vertices of $Q$. Then $Q$ has a good $(u,v)$-pair if and only if $Q$ is not isomorphic to the digraph  $C_3[\overline{K}_2,\overline{K}_2,H]$ with $V(H)=\{u,v\}$ and $A(H)\subseteq\{vu\}$, that is, one of the two digraphs in Table \ref{exceptions} (a).
\end{lem}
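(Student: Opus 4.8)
The plan is to split the statement into its two implications and to offload as much as possible onto the strong-arc-decomposition machinery, leaving only a finite list of small digraphs to be handled by hand.

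For necessity, there is essentially nothing to do: the two digraphs named in the statement are precisely the two digraphs of Table \ref{exceptions}(a), so Lemma \ref{specialexceptions} (which already covers $Q$ as well as $\overleftarrow{Q}$) tells us immediately that such a $Q$ has no good $(u,v)$-pair. Thus if $Q$ is isomorphic to one of them, no good pair exists, and I would dispose of the forward direction in one line.

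For sufficiency, I assume $Q$ is $2$-arc-strong and \emph{not} isomorphic to either of those two digraphs, and I want to build a good $(u,v)$-pair. The main reduction is the observation recorded just before Theorem \ref{SCarcdecom}: a strong arc decomposition of $Q$ yields a good $(u,v)$-pair for \emph{every} choice of $u,v$ (take an out-branching in each of the two strong spanning subdigraphs). By Theorem \ref{SCarcdecom}, a $2$-arc-strong semicomplete composition always has such a decomposition except when it is one of the four digraphs $S_4$, $C_3[\overline{K}_2,\overline{K}_2,\overline{K}_2]$, $C_3[\overline{K}_2,\overline{K}_2,\overline{K}_3]$, $C_3[\overline{K}_2,\overline{K}_2,P_2]$, so it suffices to settle these four by inspection. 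The digraph $S_4$ is itself a $2$-arc-strong semicomplete digraph on four vertices, hence has a good $(u,v)$-pair for all $u,v$ by Theorem \ref{2ArcS-SD}, and having only four vertices it is never isomorphic to a six-vertex digraph of Table \ref{exceptions}(a). For each of the three compositions $C_3[H_1,H_2,H_3]$ I would run a short case analysis over the locations of $u$ and $v$, using the cyclic symmetry of $C_3$ (which lets us rotate the three blocks) together with the arc-reversal duality sending a good $(u,v)$-pair to a good $(v,u)$-pair of $\overleftarrow{Q}$ to cut down the number of essentially different placements. Whenever the configuration is one of those in Table \ref{exceptions}(a) --- namely $u,v$ are the two vertices of a block $\overline{K}_2$ whose two neighbouring blocks are both $\overline{K}_2$, carrying at most the internal arc $vu$ --- there is nothing to prove, since that case is excluded by hypothesis; in every surviving placement I would exhibit one explicit arc-disjoint pair $(B^+_u,B^-_v)$ and verify disjointness directly.

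The hard part will be exactly these explicit constructions for the three small compositions, because the arcs between consecutive blocks are scarce: between two $\overline{K}_2$ blocks there are only four arcs, and the branching requirements force them to split into two complementary perfect matchings, one feeding $B^+_u$ and one feeding $B^-_v$, which largely dictates the routing. A naive choice of matching orientation typically produces a forced directed cycle in one of the two branchings, so the construction has genuine false starts and must be carried out with care; the point is that in each non-excluded placement there is just enough slack --- a third vertex when a block is $\overline{K}_3$, the internal arc $u\to v$ when a block is $P_2$ oriented that way, or the fact that $u$ and $v$ lie in distinct blocks so that the root and target are separated from the tight matching --- to break every such cyclic dependency. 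It is precisely the simultaneous absence of all this slack (same block, $\overline{K}_2$, both neighbours $\overline{K}_2$, with at most the arc $vu$) that yields the two genuine exceptions of Table \ref{exceptions}(a). I would therefore present one representative good pair for each surviving placement and check arc-disjointness by inspection; these small verifications are the only real computations in the argument.
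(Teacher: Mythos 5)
Your proposal takes essentially the same route as the paper: necessity in one line from Lemma \ref{specialexceptions}, sufficiency by reducing via strong arc decompositions and Theorem \ref{SCarcdecom} to the four exceptional digraphs, eliminating $S_4$ with Theorem \ref{2ArcS-SD}, and settling the three compositions $C_3[\overline{K}_2,\overline{K}_2,H^{\prime}]$, $H^{\prime}\in\{\overline{K}_2,\overline{K}_3,P_2\}$, by finite case analysis. The one caveat is that you defer the explicit good pairs, which is where nearly all of the paper's proof lives (a handful of explicit branchings, constructed inside $6$- or $7$-vertex induced subdigraphs and lifted to $Q$ by Lemma \ref{(D-X)->D}, split according to whether $u,v$ share a block and where the extra vertex or arc sits); your identification of the available slack in each non-excluded placement is accurate, so those verifications do go through as you predict.
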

\begin{proof}
	The necessity follows by Lemma \ref{specialexceptions}. To see the sufficiency, observe that if $Q$ has a strong arc decomposition then $Q$ clearly has a good $(u,v)$-pair.  By Theorems \ref{SCarcdecom} and \ref{2ArcS-SD} and the fact that $S_4$ is  2-arc-strong, we may assume that $Q=C_3[\overline{K}_2,\overline{K}_2,H^{\prime}]$, where $H^{\prime}$ is isomorphic to one of the  digraphs $\overline{K}_2$, $\overline{K}_3$ or $P_2$. Let $x,y,z$ be the vertices of $C_3$ such that $C_3=xyzx$ and choose  $\{z_1,z_2\}\subseteq V(H(z))$, $\{x_1,x_2\}\subseteq V(H(x))$ and $\{y_1,y_2\}\subseteq V(H(y))$. W.l.o.g, assume that $u=z_1$ by Remark \ref{Remark-copyofD}.
	
	For the case that $u=v$ or $H(u)\neq H(v)$, we may assume that $v\in\{x_1,y_1,z_1\}$ by Remark \ref{Remark-copyofD}. Let $O=ux_2y_1z_2x_1y_2$ and $I=z_2x_2y_2u\cup C$, where $C=ux_1y_1u$. Then $(O,I-e)$ is a good $(u,v)$-pair in $Q\left\langle\{x_1,x_2,y_1,y_2,z_1,z_2\}\right\rangle$, where $e$ is the out-arc of $v$ in $C$. Lemma \ref{(D-X)->D} implies that $Q$ has a good $(u,v)$-pair.
	
	Hence it suffices to consider the case that $H(u)=H(v)$ and $u\neq v$, assume w.l.o.g that $z_2=v$. If $|V(Q)|=7$, that is, one of $H(x)$, $H(y)$ and $H(z)$ has order three, then construct a good $(u,v)$-pair $(O,I)$ in $Q$ as follows: Let $O^{\prime}=ux_1y_1vx_2y_2$. Further, let $O=O^{\prime}\cup y_2z_3$ and $I=ux_2y_1z_3x_1y_2v$ if $|H(z)|=3$,  and $O=O^{\prime}\cup vx_3$ and $I=x_2y_1ux_3y_2v\cup x_1y_2$ if $|H(x)|=3$, and $O=O^{\prime}\cup x_1y_3$ and $I=x_1y_2ux_2y_3v\cup y_1u$ if $|H(y)|=3$. See Figure \ref{fig-lemma41} (a)-(c).
	\begin{figure}[H]
		\subfigure{\begin{minipage}[t]{0.34\linewidth}
				\centering\begin{tikzpicture}[scale=0.3]
					\foreach \i in {(-3.46,-2),(3.46,-2),(0,4)}{\draw[ line width=0.8pt] \i ellipse [radius=75pt];}
					\filldraw[black](-1,4) circle (3pt)node[label=left:$u$](u){};
					\filldraw[black](1,4) circle (3pt)node[label=right:$v$](v){};
					\filldraw[black](0,5) circle (3pt)node[label=above:$z_3$](z3){};
					\filldraw[black](-5.19,-1) circle (3pt)node[label=below:$x_1$](x1){};
					\filldraw[black](-2.5,-3) circle (3pt)node[label=left:$x_2$](x2){};
					\filldraw[black](1.8,-3) circle (3pt)node[label=right:$y_1$](y1){};
					\filldraw[black](5.3,-1) circle (3pt)node[label=below:$y_2$](y2){};
					\foreach \i/\j in {x1/y2,x2/y1,u/x2,y1/z3,y2/v}{\path[draw, blue, dashed, line width=0.8pt] (\i) edge (\j);}
					\foreach \i/\j in {u/x1,v/x2,x1/y1,x2/y2,y1/v}{\path[draw, red, line width=1.5pt] (\i) edge (\j);}
					\path[draw, blue, dashed, line width=0.8pt] (z3) edge[bend right=50] (x1);
					\path[draw, red, line width=1.5pt] (y2) edge[bend right=50] (z3);
				\end{tikzpicture}\caption*{(a) $|H(z)|=3$}\end{minipage}}
		\subfigure{\begin{minipage}[t]{0.34\linewidth}
				\centering\begin{tikzpicture}[scale=0.3]
					\foreach \i in {(-3.46,-2),(3.46,-2),(0,4)}{\draw[ line width=0.8pt] \i ellipse [radius=75pt];}
					\filldraw[black](-1,4) circle (3pt)node[label=above:$u$](u){};
					\filldraw[black](1,4) circle (3pt)node[label=above:$v$](v){};
					\filldraw[black](-5.19,-1) circle (3pt)node[label=below:$x_1$](x1){};
					\filldraw[black](-1.73,-1.5) circle (3pt)node[label=below:$x_2$](x2){};
					\filldraw[black](-3.5,-3.2) circle (3pt)node[label=left:$x_3$](x3){};
					\filldraw[black](1.8,-3) circle (3pt)node[label=right:$y_1$](y1){};
					\filldraw[black](5.3,-1) circle (3pt)node[label=below:$y_2$](y2){};
					\foreach \i/\j in {x1/y2,u/x3,x2/y1,y1/u,y2/v}{\path[draw, blue, dashed, line width=0.8pt] (\i) edge (\j);}
					\foreach \i/\j in {u/x1,v/x3,v/x2,x1/y1,x2/y2,y1/v}{\path[draw, red, line width=1.5pt] (\i) edge (\j);}
					\path[draw, blue, dashed, line width=0.8pt] (x3) edge[bend right=50] (y2);
				\end{tikzpicture}\caption*{(b) $|H(x)|=3$}\end{minipage}}
			\subfigure{\begin{minipage}[t]{0.34\linewidth}
			\centering\begin{tikzpicture}[scale=0.3]
				\foreach \i in {(-3.46,-2),(3.46,-2),(0,4)}{\draw[ line width=0.8pt] \i ellipse [radius=75pt];}
				\filldraw[black](-1,4) circle (3pt)node[label=above:$u$](u){};
				\filldraw[black](1,4) circle (3pt)node[label=above:$v$](v){};
				\filldraw[black](-5.19,-1) circle (3pt)node[label=below:$x_1$](x1){};
				\filldraw[black](-1.73,-3) circle (3pt)node[label=left:$x_2$](x2){};
				\filldraw[black](1.73,-1.6) circle (3pt)node[label=below:$y_1$](y1){};
				\filldraw[black](5.19,-1) circle (3pt)node[label=below:$y_2$](y2){};
				\filldraw[black](3.5,-3.2) circle (3pt)node[label=right:$y_3$](y3){};
				\foreach \i/\j in {u/x2,x1/y2,y1/u,y2/u,x2/y3,y3/v}{\path[draw, blue, dashed, line width=0.8pt] (\i) edge (\j);}
				\foreach \i/\j in {u/x1,v/x2,x1/y1,x2/y2,y1/v}{\path[draw, red, line width=1.5pt] (\i) edge (\j);}
				\path[draw, red, line width=1.5pt] (x1) edge[bend right=55] (y3);
				\end{tikzpicture}\caption*{(c) $|H(y)|=3$}\end{minipage}}\\~~~~
		\subfigure{\begin{minipage}[t]{0.34\linewidth}
				\centering\begin{tikzpicture}[scale=0.3]
					\foreach \i in {(-3.46,-2),(3.46,-2),(0,4)}{\draw[ line width=0.8pt] \i ellipse [radius=75pt];}
					\filldraw[black](-1,4) circle (3pt)node[label=above:$u$](u){};
					\filldraw[black](1,4) circle (3pt)node[label=above:$v$](v){};
					\filldraw[black](-5.19,-1) circle (3pt)node[label=below:$x_1$](x1){};
					\filldraw[black](-1.73,-3) circle (3pt)node[label=left:$x_2$](x2){};
					\filldraw[black](1.73,-3) circle (3pt)node[label=right:$y_1$](y1){};
					\filldraw[black](5.19,-1) circle (3pt)node[label=below:$y_2$](y2){};
					\foreach \i/\j in {u/x2,x1/y2,x2/y1,y1/v,y2/u}{\path[draw, blue, dashed, line width=0.8pt] (\i) edge (\j);}
					\foreach \i/\j in {u/x1,u/v,v/x2,x1/y1,x2/y2}{\path[draw, red, line width=1.5pt] (\i) edge (\j);}
				\end{tikzpicture}\caption*{(d) $z_1z_2\in A(Q)$}\end{minipage}}
		\subfigure{\begin{minipage}[t]{0.34\linewidth}
				\centering\begin{tikzpicture}[scale=0.3]
					\foreach \i in {(-3.46,-2),(3.46,-2),(0,4)}{\draw[ line width=0.8pt] \i ellipse [radius=75pt];}
					\filldraw[black](-1,4) circle (3pt)node[label=above:$u$](u){};
					\filldraw[black](1,4) circle (3pt)node[label=above:$v$](v){};
					\filldraw[black](-5.19,-1) circle (3pt)node[label=below:$x_1$](x1){};
					\filldraw[black](-1.73,-3) circle (3pt)node[label=left:$x_2$](x2){};
					\filldraw[black](1.73,-3) circle (3pt)node[label=right:$y_1$](y1){};
					\filldraw[black](5.19,-1) circle (3pt)node[label=below:$y_2$](y2){};
					\foreach \i/\j in {u/x2,x1/y2,x2/y1,y1/v,y2/u}{\path[draw, blue, dashed, line width=0.8pt] (\i) edge (\j);}
					\foreach \i/\j in {u/x1,x1/x2,x1/y1,x2/y2,y2/v}{\path[draw, red, line width=1.5pt] (\i) edge (\j);}
				\end{tikzpicture}\caption*{(e) $x_1x_2\in A(Q)$}\end{minipage}}
		\subfigure{\begin{minipage}[t]{0.34\linewidth}
				\centering\begin{tikzpicture}[scale=0.3]
					\foreach \i in {(-3.46,-2),(3.46,-2),(0,4)}{\draw[ line width=0.8pt] \i ellipse [radius=75pt];}
					\filldraw[black](-1,4) circle (3pt)node[label=above:$u$](u){};
					\filldraw[black](1,4) circle (3pt)node[label=above:$v$](v){};
					\filldraw[black](-5.19,-1) circle (3pt)node[label=below:$x_1$](x1){};
					\filldraw[black](-1.73,-3) circle (3pt)node[label=left:$x_2$](x2){};
					\filldraw[black](1.73,-3) circle (3pt)node[label=right:$y_1$](y1){};
					\filldraw[black](5.19,-1) circle (3pt)node[label=below:$y_2$](y2){};
					\foreach \i/\j in {u/x2,x1/y2,x2/y1,y1/v,y2/u}{\path[draw, blue, dashed, line width=0.8pt] (\i) edge (\j);}
					\foreach \i/\j in {u/x1,v/x2,x1/y1,y1/y2,y2/v}{\path[draw, red, line width=1.5pt] (\i) edge (\j);}
				\end{tikzpicture}\caption*{(f) $y_1y_2\in A(Q)$}\end{minipage}}
			\caption{Figures in Lemma \ref{2arcstrong}. The red fat (resp., blue dashed) arcs indicates the out- (resp., in-) branching.}\label{fig-lemma41}
	\end{figure}
	
	Therefore, we may assume that $|H(x)|=|H(y)|=|H(z)|=2$. As $Q$ is not isomorphic to one of the exceptions in Table \ref{exceptions} (a), we may assume that one of the digraphs $H(x)$, $H(y)$ or $H(z)$ has an arc. Further, if there is no arc with both ends in $H(x)$ or $H(y)$, then $u$ dominates $v$, i.e., $z_1z_2\in A(Q)$ as we assume that $Q$ is not isomorphic to the exception. Now one can construct a good $(u,v)$-pair $(O,I)$ in $Q$ as follows. Let $I=x_1y_2ux_2y_1v$ and construct $O$ from $ux_1y_1$ by adding $uvx_2y_2$ if $uv\in A(Q)$ (resp., $x_1x_2y_2v$ if $x_1x_2\in A(Q)$, or $y_1y_2vx_2$ if $y_1y_2\in A(Q)$). See Figure \ref{fig-lemma41} (d)-(f). This completes the proof.	
      \end{proof}

      \subsection{{\bf The case when $S$ has a good $(u_S,v_S)$-pair}}

Next we consider the case that there is a good $(u_S,v_S)$-pair in $S$.

\begin{lem}\label{goodinD}
	Let $S$ be a strong semicomplete digraph on $s\geq 2$ vertices and let $H_1,\ldots,H_s$ be arbitrary digraphs.  Let $Q=S[H_1,\ldots,H_s]$ and let $u,v$ be two vertices of $Q$ such that either $u=v$ or $H(u)\neq H(v)$. If there is a good $(u_S,v_S)$-pair in $S$, then $Q$ has a good $(u,v)$-pair which uses no arc in $\cup_{i\in[s]}A(H_i)$.
\end{lem}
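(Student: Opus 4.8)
The plan is to lift a good pair $(B^+_{u_S},B^-_{v_S})$ of $S$ to $Q$ by keeping one representative vertex in each block $H_i$ and hanging all remaining vertices of the blocks off the lifted branchings as leaves, using only arcs of $Q$ joining distinct blocks. By Remark \ref{Remark-uSvS} the hypothesis that $u=v$ or $H(u)\neq H(v)$ gives $u_S=u$ and $v_S=v$, so I may fix a system of representatives $r_1\in V(H_1),\ldots,r_s\in V(H_s)$ with $r_{H(u)}=u$ and $r_{H(v)}=v$. By Remark \ref{Remark-copyofD} the subdigraph of $Q$ induced by $\{r_1,\ldots,r_s\}$ is isomorphic to $S$, so the arc-disjoint branchings $B^+_{u_S}$ and $B^-_{v_S}$ lift to arc-disjoint branchings on the representatives, rooted at $u$ and $v$ respectively; I call these the two skeletons.

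Next I build the out-branching $O$. I start from the skeleton copy of $B^+_{u_S}$, which spans all representatives and is rooted at $u$. For every non-representative vertex $w$ lying in a block $H_i$ with $i_S\neq u_S$, I give $w$ an in-arc from the representative of the parent of $H_i$ in $B^+_{u_S}$; this is a legal arc of $Q$ because that parent block dominates $H_i$ completely. The only vertices still lacking an in-arc are those of $H(u)-u$; since $S$ is strong, $u_S$ has an in-neighbor $z_S$, and I give each such vertex its in-arc from the representative of $H(z_S)$. The in-branching $I$ is built symmetrically from the skeleton copy of $B^-_{v_S}$: every non-representative vertex in a block $H_i$ with $i_S\neq v_S$ gets an out-arc to the representative of the successor of $H_i$ in $B^-_{v_S}$, and each vertex of $H(v)-v$ gets an out-arc to the representative of $H(t_S)$, where $t_S$ is an out-neighbor of $v_S$ (which exists since $S$ is strong). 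A routine check then shows that $O$ and $I$ are spanning branchings with the correct roots: every non-root vertex receives exactly one in-arc (for $O$) or out-arc (for $I$), and every vertex is reachable from $u$ (respectively reaches $v$) through the skeleton. Moreover every arc used joins two distinct blocks, so neither branching uses an arc of $\cup_{i\in[s]}A(H_i)$.

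The crucial step is arc-disjointness, and here the representative convention pays off. By construction every arc of $O$ has its tail at a representative (the skeleton arcs go between representatives, and each fill arc points from a representative to a leaf), while every arc of $I$ has its head at a representative (dually). Hence any arc common to $O$ and $I$ must have both endpoints among the representatives, so it is an arc of the representative copy of $S$ that lies simultaneously in both skeletons, i.e. in $A(B^+_{u_S})\cap A(B^-_{v_S})$. Since $(B^+_{u_S},B^-_{v_S})$ is a good pair of $S$, this intersection is empty, so $O$ and $I$ are arc-disjoint and $(O,I)$ is the desired good $(u,v)$-pair.

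The one genuinely delicate point is the treatment of the surplus vertices of the two root blocks $H(u)$ and $H(v)$: they cannot be served by internal arcs and so must borrow arcs from a neighboring block, which is exactly where the strong connectivity of $S$ is needed to supply $z_S$ and $t_S$. What keeps the whole construction clean is placing the representative on the \emph{tail} side of every arc of $O$ and on the \emph{head} side of every arc of $I$, which forces every potential conflict down to the skeletons, where it is excluded by the good pair in $S$.
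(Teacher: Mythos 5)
Your proposal is correct and follows essentially the same construction as the paper's proof: lift the good pair of $S$ to the canonical copy of $S$ inside $Q$, hang all surplus vertices of each block as leaves off the appropriate representative, and serve $H(u)-u$ and $H(v)-v$ via an in-neighbor of $u_S$ and an out-neighbor of $v_S$, which exist since $S$ is strong. Your observation that every arc of $O$ has its tail at a representative while every arc of $I$ has its head at a representative is just a cleaner packaging of the paper's own arc-disjointness check.
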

\begin{proof}
	By Remark \ref{Remark-uSvS}, we have $u_S=u, v_S=v$. Let $(O,I)$ be a good $(u,v)$-pair in $S$. Let $u_Iu$ be any in-arc of $u$ and $vv_O$ be any out-arc of $v$ in $S$. The arcs exist as $S$ is strong. Then one can construct the wanted $(u,v)$-pair $(B_{u}^+,B_{v}^-)$ of $Q$ as follows. Construct $B_{u}^+$ from $O$ by adding arcs $\{xr:xy\in A(O),r\in H(y)\}$ and $\{u_Ir:r\in H(u)-u\}$ and construct $B_{v}^-$ from $I$ by adding arcs $\{ry:xy\in A(I),r\in H(x)\}$ and $\{rv_O:r\in H(v)-v\}$.
	
	It should be noted that if $u_Iu$ is an arc of the  in-branching $I$ in $S$, then the vertices in $H(u_I)$ are collected by $u$ in $B_{v}^-$ and vertices in $H(u)-u$ are collected by $u_I$ in $B_{u}^+$. So no arcs between $H(u_I)$ and $H(u)$ are used both in $B_{v}^-$ and $B_{u}^+$. By the same argument, no arcs between $H(v_O)$ and $H(v)$ are used twice and thus $(B_{u}^+,B_{v}^-)$ is a good $(u,v)$-pair in $Q$.
\end{proof}

\begin{lem}\label{uneqvsameQ}
	Let $S$ be a semicomplete digraph on $s\geq 2$ vertices and let $H_1,\ldots,H_s$ be arbitrary digraphs. Let $Q=S[H_1,\ldots,H_s]$ and let $u,v$ be two distinct vertices of $Q$ such that $u$ and $v$ belong to the same $H_r$. Suppose that there exists a good $(u_S,v_S)$-pair in $S$.  If $d_Q^+(u)\geq 2$ and $d_Q^-(v)\geq 2$, then $Q$ has a good $(u,v)$-pair.
\end{lem}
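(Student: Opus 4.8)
The plan is to exploit the good pair that $S$ is assumed to carry and to \emph{lift} it to $Q$. By Remark~\ref{Remark-uSvS} we have $u_S=v_S$; write $w$ for this common vertex of $S$, so the hypothesis supplies a good $(w,w)$-pair $(O,I)$ in $S$, where $O$ is an out-branching and $I$ an in-branching rooted at $w$ with $A(O)\cap A(I)=\emptyset$. Fix one representative vertex in each $H(x)$, $x\in V(S)-w$, and build $B_u^+$ by replacing every arc $xy\in O$ with all arcs from the tail representative of $H(x)$ (taking $u$ itself when $x=w$) to the whole of $H(y)$; build $B_v^-$ symmetrically from $I$, using $v$ as the head representative of $H(w)$. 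Because $w$ is a source of $O$ and a sink of $I$ and $O,I$ are arc-disjoint, the two lifted structures are arc-disjoint: an arc of $Q$ between blocks $H(x),H(y)$ lies in the lifted $B_u^+$ only if $xy\in O$ and in the lifted $B_v^-$ only if $xy\in I$, and no arc internal to a block is used by either.

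After this lift every block other than $H(w)$ is fully spanned, while inside $H(w)$ only $u$ is reached by $B_u^+$ and only $v$ acts as a root in $B_v^-$. So it remains to give each remaining vertex of $H(w)$ one in-arc in $B_u^+$ and one out-arc in $B_v^-$. For $t\in H(w)-\{u,v\}$ this is immediate: since $S$ is strong, $w$ has an in-neighbour and an out-neighbour in $S$, so $t$ can take its in-arc from the representative of some in-neighbour block and its out-arc into the representative of some out-neighbour block. These arcs meet $H(w)$ only at $t$, hence are untouched by the main structures and by one another, and they create no cycle because every out-neighbour block reaches $v$ through $I$ while every in-neighbour block is reached from $u$ through $O$. (Alternatively one may first delete $H(w)-\{u,v\}$ via Lemma~\ref{(D-X)->D} to reduce to $H(w)=\{u,v\}$, but this must be done with care, since deleting these vertices can lower $d^+(u)$ or $d^-(v)$ and destroy feasibility.)

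The crux, and the only place the hypotheses $d_Q^+(u)\ge 2$ and $d_Q^-(v)\ge 2$ enter, is supplying the root $v$ with an in-arc in $B_u^+$ and the root $u$ with an out-arc in $B_v^-$, because $v$ is already saturated by the in-arcs of $B_v^-$ and $u$ by the out-arcs of $B_u^+$. Generically this is free: the out-arcs of $u$ used by $B_u^+$ are exactly those into $\bigcup_{wa\in O}H(a)$, so any out-neighbour of $u$ lying inside $H(w)$, or inside a block $H(z)$ with $wz\in A(S)\setminus A(O)$, provides the spare out-arc for $u$ (which then reaches $v$ through $I$), and symmetrically $v$ gets its spare in-arc; the degree bounds force such a neighbour to exist \emph{unless} $N^+_{H(w)}(u)=\emptyset$ and $A(O)$ contains every out-arc of $w$ (respectively the in-analogue for $v$).

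I expect the main obstacle to be precisely this boundary situation. There one cannot simply reserve an out-arc of $u$, and instead must reroute the coverage of $w$'s out-neighbour blocks through \emph{several} vertices of $H(w)$, using that all vertices of $H(w)$ dominate the same blocks: for example, when $w$ lies on a short cycle of $S$ one threads $u$ through an out-neighbour block and back into $H(w)$, letting another vertex of $H(w)$ cover a second out-neighbour block and thereby freeing one out-arc of $u$ (this is exactly what the degree conditions make room for); when $d_S^+(w)\ge 2$ one may instead re-choose $O$ so that $w$ has out-degree $1$ in $O$ while keeping $O$ arc-disjoint from $I$. Carrying out such an adjustment simultaneously on the $u$-side and the $v$-side, without reusing any arc and without creating a cycle, is the heart of the argument, and I would organise the proof as a short case analysis on the in- and out-neighbourhoods of $w$ in $S$ to cover it.
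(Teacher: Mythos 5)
Your overall plan is the same as the paper's: lift the good $(w,w)$-pair $(O,I)$ of $S$ blockwise to $Q$, patch the vertices of $H(w)-\{u,v\}$ with one in-arc and one out-arc from neighbouring blocks, and then supply $v$ with an in-arc in $B_u^+$ and $u$ with an out-arc in $B_v^-$. Your handling of the easy cases is sound (the case $uv\in A(Q)$, and the ``generic'' case where $u$ has an out-neighbour in $H(w)$ or in a block $H(z)$ with $wz\in A(S)\setminus A(O)$, and symmetrically for $v$). The problem is that the ``boundary situation'' you explicitly defer to an unexecuted ``short case analysis'' is not a corner case to be mopped up: it is the entire content of the lemma, and it is exactly where the paper spends almost all of its proof --- the lift to a good $(u,u)$-pair of $Q-v$, the swap governed by condition~(\ref{eq1}), the relabelling arguments showing that (\ref{eq1}) can always be arranged, and a final exchange that uses semicompleteness of $S$ to produce an arc $v_1v_2$ between the blocks of two in-neighbours of $v$. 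Announcing that you would do a case analysis there is a genuine gap, not a deferred routine verification.

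Moreover, one of the two remedies you sketch for that case is provably unavailable. Let $S$ have vertex set $\{w,z_1,z_2\}$ and arcs $wz_1,\ z_1w,\ wz_2,\ z_2z_1$; this is strong and semicomplete, and its good $(w,w)$-pair is \emph{unique}: $I=\{z_2z_1,z_1w\}$ is forced (each of $z_2,z_1$ has only one out-arc), whence $O=\{wz_1,wz_2\}$ is forced. So $d^+_O(w)=2$, every out-arc of $w$ lies in $O$, and every in-arc of $w$ lies in $I$; you cannot ``re-choose $O$ so that $w$ has out-degree $1$ while keeping it arc-disjoint from $I$''. Now take $Q=S[H(w),H(z_1),H(z_2)]$ with $H(w)=\overline{K}_2$ on $\{u,v\}$, $H(z_1)=\overline{K}_2$ on $\{z_1,z_1'\}$ and $H(z_2)=\overline{K}_1=\{z_2\}$. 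The hypotheses of the lemma hold ($d_Q^+(u)=3$, $d_Q^-(v)=2$), yet your lift consumes all three out-arcs of $u$ and both in-arcs of $v$, so the boundary failure occurs \emph{simultaneously} on both sides and no re-choice of $(O,I)$ exists. A good $(u,v)$-pair does exist, e.g. $B_u^+=\{uz_2,uz_1,z_2z_1',z_1v\}$ together with $B_v^-=\{uz_1',z_1'v,z_1u,z_2z_1\}$, but note that $B_u^+$ is not a lift of $O$ at all: the coverage of $H(z_1)$ is split between $u$ and $z_2$, and $B_u^+$ uses the arc $z_2z_1'$, which is parallel to an arc of $I$, not of $O$. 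Producing such redistributions in general --- simultaneously on the $u$-side and the $v$-side, without arc reuse and without creating cycles --- is precisely what the paper's condition~(\ref{eq1}) machinery accomplishes, and it is the part your proposal leaves unproven.
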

\begin{proof} 
	 It follows by Remark \ref{Remark-uSvS} that $u_S=v_S=u$. Let $(O^S,I^S)$ be a good $(u,u)$-pair in $S$.  Suppose that $u_Iu\in A(I^S)$ and $uu_O\in A(O^S)$, which implies that every vertex of $H(u_I)$ (resp., $H(u)$) dominates every vertex of $H(u)$ (resp., $H(u_O)$) in $Q$. We first extend $(O^S,I^S)$ to a good pair $(O,I)$ rooted at $u$ in $Q-v$ as follows: $A(O)=\{xr:xy\in A(O^S), r\in H(y)\}\cup \{u_Ir:r\in H(u)-\{u,v\}\}$ and $A(I)=\{ry:xy\in A(I^S), r\in H(x)\}\cup \{ru_O:r\in H(u)-\{u,v\}\}$. 
	
	If $uv\in A(Q)$, then $(O\cup \{u_Iv\},I\cup \{uv\})$ is a good $(u,v)$-pair in $Q$. So it suffices to consider the case that $u$ does not dominate $v$.  Then $u$ has out-degree at least 2 and $v$ has in-degree at least 2 in $Q-\{u,v\}$. Let $u_1,\ldots,u_p$, $p\geq 2$ be out-neighbors of $u$ and let $v_1,\ldots,v_q$, $q\geq 2$ be in-neighbors of $v$ in $Q-\{u,v\}$, respectively. Since $u$ and $v$ have the same out- and in-neighbors in $Q-H(u)$ and no arc of $\cup_{i=1}^tA(H_i)$ is used in $O\cup I$, we can assume that $u_1,\ldots,u_p$ and $v_1,\ldots,v_q$ are labeled such that the out-arcs of $u$ in $O$ are $\{uu_i: i\in [p^{\prime}]\}$, where $1\leq p^{\prime}\leq p$ and, the arcs in $\{v_iu:i\in[q^{\prime}]\}$ with $1\leq q^{\prime}\leq q$ belong to $I$.
	
	Suppose first that the following condition holds possibly after permuting $u_1,\ldots, u_{p^{\prime}}$. 
	\begin{equation}
		\begin{aligned}\label{eq1}
			\mbox{The arc } v_1u \mbox{ belongs to the } (u_2,u)\mbox{-path in }I \mbox{ and, there exists } \\r\in[q], r>1 \mbox{ such that there is no } (u_2,v_r)\mbox{-path in }O\mbox{ when } uu_2\in A(O).
		\end{aligned}
	\end{equation}
\begin{figure}[H]
	\subfigure{\begin{minipage}[t]{0.49\linewidth}
			\centering\begin{tikzpicture}[scale=0.5]
				\draw[ line width=0.8pt] (0,0) ellipse [x radius=60pt, y radius=40pt];
				\filldraw[black](-1,0) circle (3pt)node[label=above:$u$](u){};
				\filldraw[black](1,0) circle (3pt)node[label=above:$v$](v){};
				\filldraw[black](3,-3) circle (3pt)node[label=right:$u_1$](u1){};
				\filldraw[black](-5,-3) circle (3pt)node[label=left:$u_2$](u2){};
				\filldraw[black](-3,-3) circle (3pt)node[label=below:$v_1$](v1){};
				\filldraw[black](1,-3) circle (3pt)node[label=below:$v_2$](v2){};
				\foreach \i/\j in {u2/v1,v1/u,v2/u,u1/v2}{\path[draw, blue, dashed, line width=0.8pt] (\i) edge (\j);}
				\foreach \i/\j in {u/u1,u/u2}{\path[draw, red, line width=1.8pt] (\i) edge (\j);}
				\path[draw, red, line width=1.8pt] (u2) edge[bend right=40] (v2);
				\path[draw, red, line width=1.8pt] (u1) edge[bend left=40] (v1);
			\end{tikzpicture}\caption*{(a)}\end{minipage}}~~~~\subfigure{\begin{minipage}[t]{0.49\linewidth}
			\centering\begin{tikzpicture}[scale=0.5]
				\draw[ line width=0.8pt] (0,0) ellipse [x radius=60pt, y radius=40pt];
				\filldraw[black](-1,0) circle (3pt)node[label=above:$u$](u){};
				\filldraw[black](1,0) circle (3pt)node[label=above:$v$](v){};
				\filldraw[black](3,-3) circle (3pt)node[label=right:$u_1$](u1){};
				\filldraw[black](-5,-3) circle (3pt)node[label=left:$u_2$](u2){};
				\filldraw[black](-3,-3) circle (3pt)node[label=below:$v_1$](v1){};
				\filldraw[black](1,-3) circle (3pt)node[label=below:$v_2$](v2){};
				\foreach \i/\j in {u2/v1,v1/u,v2/u,u1/v2}{\path[draw, blue, dashed, line width=0.8pt] (\i) edge (\j);}
				\foreach \i/\j in {u/u1,u/u2,v1/v2}{\path[draw, red, line width=1.8pt] (\i) edge (\j);}
				\path[draw, line width=0.8pt] (u2) edge[bend right=40] (v2);
				\path[draw, red, line width=1.8pt] (u1) edge[bend left=40] (v1);
			\end{tikzpicture}\caption*{(b)}\end{minipage}}
	\caption{Figures in Lemma \ref{uneqvsameQ}. The red fat (resp., blue dashed) arcs indicates the out- (resp., in-) branching with root $u$ in $Q-v$.}\label{fig-lemma43}
\end{figure}

	Then one can obtain a good $(u,v)$-pair in $Q$ as follows: $(I-v_1u)\cup\{v_1v,uu_2\}$ is the in-branching and construct the out-branching from $O\cup\{v_rv\}$ by adding the arc $vu_2$ if $uu_2\in A(O)$. Note that the condition (\ref{eq1}) guarantees that no cycle occurs in either branchings after the modification.
	
Thus it suffices to show that there exists a good pair rooted at $u$ in $Q-v$ which satisfies the condition (\ref{eq1}). By relabeling $v_1,\ldots,v_q$ if necessary, we may assume that the first statement of (\ref{eq1}) holds. If the second part of the condition does not hold, then $uu_2\in A(O)$ and $uu_2$ is on the $(u,v_r)$-path in $O$ for each $r>1$. As $O$ is an out-branching, there is no $(u_1,v_r)$-path in $O$ for any $r>1$. So if $v_1u$ also belongs to the $(u_1,u)$-path in $I$, then we can exchange the labels of $u_1$ and $u_2$ and then (\ref{eq1}) holds. Thus we may assume that $v_2u$ belongs to the $(u_1,u)$-path in $I$, which implies that $d_I^-(u)\geq 2$. Moreover,  there is a $(u_1,v_1)$-path in $O$, otherwise (\ref{eq1}) will hold with $r=2$ when we exchange the labels of $u_1$ and $u_2$ and the labels of $v_1$ and $v_2$.

It should be noted that $v_1\neq u_{2}$ and $v_2\neq u_{1}$ as there is a $(u_{i},v_{i})$-path in the out-branching $O$ for each $i\in[2]$. Since $v_1$ and $v_2$ are dominated by distinct vertices in $O$, we conclude that $v_1,v_2\notin H(u)$ and $H(v_1)\neq H(v_2)$ by the construction of $O$. Thus $H(v_i)$ dominates $H(v_{3-i})$ for some $i\in[2]$ as $S$ is semicomplete. We may w.l.o.g. assume that $i=1$ and then $v_1v_2\in A(Q)$ (see Figure \ref{fig-lemma43}). Then the in-branching $I$ and the out-branching obtained from $O$ by deleting the in-arc of $v_2$ in $O$ and adding the arc $v_1v_2$ form a good pair rooted at $u$ in $Q-v$ satisfying condition (\ref{eq1}) with $r=2$, which completes the proof. 
\end{proof}

\subsection{{\bf When $S$ has branchings $B^+_{u_S},B^-_{v_S}$ that  share only one arc}}

We start by recalling the characterization of semicomplete digraphs without good $(u,u)$-pairs.

\begin{thm}\cite{bangJGT20b} \label{sABC}
 	Let $S$ be a strong semicomplete digraph and let $u\in V(S)$ be arbitrary vertex. Suppose that $S$ does not contain a good $(u,u)$-pair. Then the following holds where $X, Y, Z$ form a partition of $V(S)-u$ such that $N^+_S(u) = X\cup Z$ and $N^-_S(u)= Y\cup Z$, where $Z$ is the set of vertices that form a 2-cycle with $u$:  There is precisely one arc $e$ leaving the terminal component of $S\left\langle X \right\rangle$ and precisely one arc $e^{\prime}$ entering the initial component of $S\left\langle Y\right\rangle$ and $e=e^{\prime}$. In particular $(S,u,u)$ is of type A with $\alpha=1$ and backward arc $e$.
      \end{thm}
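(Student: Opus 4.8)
The plan is to give a self-contained constructive argument that proves the contrapositive of the implication ``no good $(u,u)$-pair $\Rightarrow$ the asserted structure'': I will show that whenever the structure fails, a good $(u,u)$-pair can be built explicitly. Two elementary observations drive everything. Since $X\cup Z=N^+_S(u)$, the root $u$ dominates every vertex of $X\cup Z$; dually, since $Y\cup Z=N^-_S(u)$, every vertex of $Y\cup Z$ dominates $u$. Moreover $S\langle X\rangle$ and $S\langle Y\rangle$ are semicomplete, so their strong components are linearly ordered; in particular $S\langle X\rangle$ has a unique terminal (sink) component $T_X$ and $S\langle Y\rangle$ a unique initial (source) component $I_Y$. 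First I would clear the degenerate cases: if $X=\emptyset$ then the in-branching can be taken to be the star $\{yu:y\in Y\}\cup\{zu:z\in Z\}$ of all arcs into $u$, while the out-branching reaches $Y$ through $Z$, and these are trivially arc-disjoint; the case $Y=\emptyset$ is symmetric. Hence the hypothesis forces $X\neq\emptyset$ and $Y\neq\emptyset$, so $T_X$ and $I_Y$ are well defined.

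The core step is one flexible construction. Call an arc an \emph{entry arc} if it runs from $X\cup Z$ into $I_Y$, and an \emph{exit arc} if it runs from $T_X$ into $Y\cup Z$. Strongness of $S$ gives at least one of each: $I_Y$ must be entered from outside $Y$, and since $u\not\to Y$ this can only be from $X\cup Z$; dually $T_X$ must send an arc outside $X$, and since $X\cap N^-_S(u)=\emptyset$ this must go to $Y\cup Z$. Given any entry arc $ar$ and any exit arc $bc$, I would assemble the out-branching from $\{ux:x\in X\}\cup\{uz:z\in Z\}$, the arc $ar$, and an out-branching of $S\langle Y\rangle$ rooted at $r\in I_Y$; and the in-branching from $\{yu:y\in Y\}\cup\{zu:z\in Z\}$, the arc $bc$, and an in-branching of $S\langle X\rangle$ rooted at $b\in T_X$. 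A routine endpoint bookkeeping (the in-branching uses no arc with both ends in $Y$, the out-branching none with both ends in $X$, and arcs are distinguished by which of $u,X,Y,Z$ their ends lie in) shows the two spanning subdigraphs can share only the single coincidence ``$ar=bc$''. Thus a good pair exists as soon as some entry arc differs from some exit arc, and this is impossible \emph{exactly} when there is a unique entry arc and a unique exit arc and they are equal; in that case their common arc $e$ necessarily runs from $T_X$ to $I_Y$. This proves the contrapositive and at once yields the three claims: one arc leaving $T_X$, one entering $I_Y$, and their equality.

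Finally I would read off the ``type A with $\alpha=1$'' conclusion from the isolated arc $e=xy$. Put $V_1=I_Y$, $V_3=T_X$, and $V_2=V(S)-V_1-V_3$; since $u\notin I_Y\cup T_X$ we have $u\in V_2$. Uniqueness of the entry arc means the only arc of $S$ entering $I_Y$ is $e$, whose tail lies in $T_X=V_3$, so no arc of $V_2$ enters $V_1$; uniqueness of the exit arc means the only arc leaving $T_X$ is $e$, whose head lies in $I_Y=V_1$, so no arc of $V_3$ enters $V_2$; and these same two facts leave $e$ as the unique arc from $V_3$ to $V_1$. Hence every inter-part arc goes forward except $e$, and $e$ goes from the (single) strong component $T_X=S\langle V_3\rangle$ to the (single) strong component $I_Y=S\langle V_1\rangle$, which is precisely Definition \ref{typeAB} (type A) with $\alpha=1$ and backward arc $e$.

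The main obstacle I anticipate is not a single deep step but the disjointness bookkeeping in the core construction: verifying that the two subdigraphs really are spanning out- and in-branchings and that their \emph{only} possible common arc is the entry/exit arc. The delicate point is keeping the two internal branchings of $S\langle Y\rangle$ and $S\langle X\rangle$ apart from the star arcs at $u$ and from the single entry and exit arcs, which is exactly what the observations ``$u$ dominates $X\cup Z$'' and ``$Y\cup Z$ dominates $u$'' are for. A secondary care point is the last step: one must choose the partition $(I_Y,\ \cdot\ ,T_X)$ rather than the naive $(Y,\{u\}\cup Z,X)$, since the latter typically has several backward arcs and would not meet the $\alpha=1$ requirement.
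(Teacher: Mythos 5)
Your proposal is correct, but there is nothing in this paper to compare it against line by line: Theorem \ref{sABC} is imported from Bang-Jensen and Huang \cite{bangJGT20b} and is stated here without proof, so your argument supplies a self-contained proof where the paper relies on a citation. Your steps all check out. The degenerate cases are right (any out-branching rooted at $u$ has no arc with head $u$, so it is automatically arc-disjoint from the star of all arcs entering $u$); the existence of entry and exit arcs follows, as you say, from strongness of $S$ together with $u\not\rightarrow Y$, $X\cap N^-_S(u)=\emptyset$ and the linear ordering of strong components of $S\left\langle X\right\rangle$ and $S\left\langle Y\right\rangle$ --- and these same facts are what identify ``arcs leaving $T_X$'' with arcs from $T_X$ into $Y\cup Z$ and ``arcs entering $I_Y$'' with arcs from $X\cup Z$ into $I_Y$, so your uniqueness conclusion really is the one the theorem asserts (here $T_X$, $I_Y$ denote the terminal component of $S\left\langle X\right\rangle$ and the initial component of $S\left\langle Y\right\rangle$). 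The tail/head bookkeeping in your core construction correctly shows the only possible common arc of the two assembled branchings is an entry arc coinciding with an exit arc, and your choice of the partition $(I_Y,\,V(S)-I_Y-T_X,\,T_X)$ rather than the naive $(Y,\{u\}\cup Z,X)$ is exactly what is needed to meet Definition \ref{typeAB} with $\alpha=1$: since $S\left\langle T_X\right\rangle$ and $S\left\langle I_Y\right\rangle$ are themselves strong, the terminal/initial-component clause for the backward arc is trivially satisfied. It is worth noting that your core construction is the very one this paper performs immediately after invoking the theorem (proof of Lemma \ref{uneqvsameQ1}), where an in-branching of $S\left\langle X\right\rangle$ rooted at $x$ and an out-branching of $S\left\langle Y\right\rangle$ rooted at $y$ are combined into branchings $B_{u,S}^+,B_{u,S}^-$ with $A(B_{u,S}^+)\cap A(B_{u,S}^-)=\{xy\}$; what you add is to run that construction with an arbitrary entry arc against an arbitrary exit arc, so that nonexistence of a good $(u,u)$-pair forces both arcs to be unique and equal --- which is precisely the theorem, obtained by contrapositive.
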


      We first consider the case when $u$ and $v$ correspond to the same vertex of $S$.
      
\begin{lem}\label{uneqvsameQ1}
	Let $S$ be a strong semicomplete digraph on $s\geq 2$ vertices and let $H_1,\ldots,H_s$ be arbitrary digraphs. Let $Q=S[H_1,\ldots,H_s]$ and let $u,v$ be two vertices of the same $H_r$ for some $r\in [s]$ with $d_Q^+(u)\geq2, d_Q^-(v)\geq2$ if $u\neq v$. Suppose that $Q$ is not 2-arc-strong and that $S$ has  no good $(u_S,v_S)$-pair. Then $(S,u_S,v_S)$ is of type A with $\alpha=1$ and backward arc $xy$ and if $Q$ has no good $(u,v)$-pair,then one of the following statements holds.
	
	(i)  $|H(x)|=|H(y)|=1$.
	
	(ii) $|H(x)|\geq 2$ and $d_Q^+(w)=1$ for every $w\in H(x)$, or $|H(y)|\geq 2$ and $d_Q^-(w)=1$ for every $w\in H(y)$.
\end{lem}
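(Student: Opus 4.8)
The plan is to first pin down the structure of $S$ and then reduce the whole question to a routing problem across the single ``backward bundle'' created by the backward arc. By Remark \ref{Remark-uSvS}, since $u,v$ lie in the same $H_r$ we have $u_S=v_S$; write $w_0:=u_S=v_S$. As $S$ has no good $(w_0,w_0)$-pair, Theorem \ref{sABC} applies and yields that $(S,w_0,w_0)$ is of \textbf{type A} with $\alpha=1$ and a single backward arc $xy$. Concretely this gives a partition $V_1,V_2,V_3$ of $V(S)$ with $w_0\in V_2$, all arcs between distinct blocks oriented forward except the arc $xy$, where $x$ lies in the terminal strong component of $S\langle V_3\rangle$ and $y$ in the initial strong component of $S\langle V_1\rangle$. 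This already proves the first assertion of the lemma. Writing $W_i=\bigcup_{w\in V_i}H(w)$ and $X=H(x)$, $Y=H(y)$, the key observation is that in $Q$ the only arcs leaving $W_2\cup W_3$ toward $W_1$ form the complete ``backward bundle'' of all arcs from $X$ to $Y$; every out-branching from $u$ must use this bundle to reach $W_1$, and every in-branching to $v$ must use it to let $W_3$ reach $v$.

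For the second assertion I would argue the contrapositive: assuming that neither (i) nor (ii) holds, I construct a good $(u,v)$-pair. Using Lemma \ref{noGP}(I) (or, for the small exceptional $S$, a direct type-A construction) I first fix branchings $B^+_{w_0},B^-_{w_0}$ in $S$ whose arc sets meet exactly in $xy$, and lift them to $Q$ as in Lemmas \ref{goodinD} and \ref{uneqvsameQ}, so that the resulting out- and in-branchings of $Q$ are arc-disjoint everywhere except possibly on the backward bundle. The lifted out-branching must still cover $Y$ after entering $W_1$, which requires a set $F_O$ of bundle arcs whose heads hit one vertex in each initial strong component of the blow-up of $y$'s component; the lifted in-branching must let every vertex of $X$ reach $v$, which requires a set $F_I$ of bundle arcs whose tails hit one vertex in each terminal strong component of the blow-up of $x$'s component. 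The construction succeeds if and only if $F_O$ and $F_I$ can be chosen arc-disjoint.

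The combinatorial heart is therefore to choose $F_O,F_I$ disjoint, and here I would split into regimes. If $y$'s component in $S\langle V_1\rangle$ is nontrivial, or $H(y)$ has an internal arc, then fewer than $|Y|$ entry vertices suffice, so some vertex of $Y$ is free to absorb all of $F_I$ while $F_O$ avoids it; symmetrically, if $x$'s component is nontrivial or $H(x)$ has an internal arc one funnels $F_I$ through a single tail that $F_O$ avoids. In the remaining regime $H(x)$ and $H(y)$ are independent and $x,y$ are trivial components, so the bundle is a complete bipartite digraph, $F_O$ is one in-arc per vertex of $Y$ and $F_I$ one out-arc per vertex of $X$; an explicit selection shows these can be made disjoint exactly when $\min(|X|,|Y|)\ge 2$, whereas $\min(|X|,|Y|)=1$ forces an overlap. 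The final step is to check that each failure of the routing matches one of (i),(ii): the case $|X|=|Y|=1$ gives (i), while an independent side of size $\ge2$ against a trivial opposite side forces all the relevant out-degrees (or in-degrees) to equal $1$, which is exactly (ii) once we recall that $d^+_Q(w)\ge|Y|$ for every $w\in X$ and $d^-_Q(w)\ge|X|$ for every $w\in Y$.

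I expect the main obstacle to be precisely this last correspondence: translating ``the bundle admits disjoint routings'' into the degree conditions (i) and (ii). The delicate point is that the routing freedom on the $H(x)$-side comes not only from internal arcs of $H(x)$ but also from a nontrivial strong component of $x$ inside $S\langle V_3\rangle$ (and symmetrically for $H(y)$), and one must verify that both sources of freedom are already captured by the clause ``$d^+_Q(w)\ge 2$ for some $w\in H(x)$''. A secondary technicality, handled as in Lemma \ref{uneqvsameQ} using the hypotheses $d^+_Q(u)\ge2$ and $d^-_Q(v)\ge2$, is that $u$ and $v$ share the blob $H(w_0)\subseteq W_2$, so the two branchings must be rooted and terminated inside $H(w_0)$ without reusing an arc.
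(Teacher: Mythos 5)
Your first paragraph is sound and matches the paper: Theorem \ref{sABC} gives that $(S,u_S,v_S)$ is of type A with $\alpha=1$, and the bundle of arcs from $H(x)$ to $H(y)$ is indeed the only way for an out-branching rooted at $u$ to enter the blow-up of $V_1$, and the only way for the blow-up of $V_3$ to reach $v$ in an in-branching. The gap is in your central claim that ``the construction succeeds if and only if $F_O$ and $F_I$ can be chosen arc-disjoint.'' This equivalence is false, and it fails exactly at what you set aside as a secondary technicality. Take $S=uxyu$ a $3$-cycle and $Q=C_3[H(u),H(x),H(y)]$ with $H(u)=\{u,v\}$, $H(x)=\{x_1,x_2\}$, $H(y)=\{y_1,y_2\}$, all three independent sets. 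This lands in your third regime with $\min(|X|,|Y|)=2$, so your routing criterion is satisfied (take $F_O=\{x_1y_1,x_2y_2\}$ and $F_I=\{x_1y_2,x_2y_1\}$), and neither (i) nor (ii) holds (every vertex of $H(x)$ has out-degree $2$ and every vertex of $H(y)$ has in-degree $2$). Yet $Q$ has no good $(u,v)$-pair: this is precisely the exception of Table \ref{exceptions}(a), proved in Lemma \ref{specialexceptions}. Concretely, $u$ has only the two out-arcs $ux_1,ux_2$; the in-branching needs one of them, so the out-branching gets only one, forcing it to reach the other copy of $x$ through $v$, after which the two branchings compete for the four arcs between $\{x_1,x_2\}$ and $\{y_1,y_2\}$ and one of them is forced either to starve a vertex or to close a cycle. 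So arc-disjointness on the bundle is not sufficient; the two branchings must also be routed through the common blob $H(u)\ni u,v$, and with small blobs this fails even when the bundle routing is fine. Your appeal to Lemma \ref{uneqvsameQ} cannot repair this, since that lemma presupposes a good $(u_S,v_S)$-pair in $S$, which is exactly what is missing here.

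The reason this example does not contradict the lemma is the hypothesis you never use: $Q$ is \emph{not} 2-arc-strong. The digraph above is 2-arc-strong (Lemma \ref{Dstr-Qkstr}), hence outside the lemma's scope. This hypothesis is not cosmetic: the paper's proof runs an explicit case analysis (on small induced subdigraphs of the form $Q\left\langle X\cup Y\cup\{u,v,x^{\prime}\}\cup\{y^{\prime}\}\right\rangle$, extended via Lemma \ref{(D-X)->D}) and bottoms out in exactly the residual configuration $Z=\emptyset$, $X=\{x\}$, $Y=\{y\}$, i.e.\ $S=uxyu$ with all three blobs of size at least $2$, which it eliminates by observing that such a $Q$ would be 2-arc-strong, contradicting the hypothesis. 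Any correct proof must invoke 2-arc-strongness at this point, and a reduction to a routing problem on the backward bundle alone cannot see it; the interaction between the shared root-blob and the bundle is the actual heart of the lemma, not a technicality that can be factored out.
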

\begin{proof} 
  It follows by Remark \ref{Remark-uSvS} that $u_S=v_S=u$. Since $S$ has no good $(u,u)$-pair, by Theorem \ref{sABC}, there is a partition $X, Y, Z$ of $V(S)-u$ such that $N^+_S(u) = X\cup Z$ and $N^-_S(u)= Y\cup Z$, where $Z$ is the set of vertices that form a 2-cycle with $u$. Further, $(S,u_S,v_S)$ is of type A with $\alpha=1$ and backward arc $xy$, where $xy$ is the only arc leaving the terminal component of $S\left\langle X \right\rangle$ (resp., entering the initial component of $S\left\langle Y\right\rangle$).  As $x$ belongs to the terminal component of $S\left\langle X \right\rangle$
there is an in-branching $B_{x,S\left\langle X \right\rangle}^-$ rooted at $x$ in  $S\left\langle X \right\rangle$. Similarly, as $y$ belongs to the initial component of $S\left\langle Y \right\rangle$, there is an out-branching $B_{y,S\left\langle Y \right\rangle}^+$ rooted at $y$ in  $S\left\langle Y \right\rangle$. Using these branchings, one easily obtains a good $(u,u)$-pair $B_{u,S}^+, B_{u,S}^-$ in $S$ such that $A(B_{u,S}^+)\cap A(B_{u,S}^-)=\{xy\}$.

Suppose that conditions (i) and (ii) do not hold.  We may assume w.l.o.g.  that $|H(x)|\geq 2$ and by Remark \ref{Remark-copyofD} we may choose $\{x,x^{\prime}\}\subseteq H(x)$ such that $d_Q^+(x^{\prime})\geq2$ and, if $x^{\prime}$ has an out-neighbor in $H(x)$, then assume that $x$ is such an neighbor, i.e., $x^{\prime}x\in A(Q)$. 
 Let $e$ be an out-arc of $x^{\prime}$ in $Q$ which is distinct from  $x^{\prime}y$. Since $xy$ is the only one arc in $S$ leaving the terminal component of $S\left\langle X \right\rangle$ and $x^{\prime}\in H(x)$, the arc $e$ goes to some vertex in the terminal component of
 $S\left\langle X \right\rangle$.

If $u=v$, then
  $(B_{u,S}^+-xy)\cup\{ux^{\prime},x^{\prime}y\}$ and $B_{u,S}^-\cup\{e\}$ form a good $(u,u)$-pair in $Q\left\langle V(S)\cup\{x^{\prime}\} \right\rangle$ and there is a good $(u,u)$-pair in $Q$ by Lemma \ref{(D-X)->D}. So $u\neq v$ and by our assumption we have $d_Q^-(v)\geq2$.  Let $y^{\prime}$ be a copy of $y$ if $|H(y)|\geq 2$ and otherwise $y=y^{\prime}$.

Let $Q^{\prime}=Q\left\langle X\cup Y\cup \{u,v,x^{\prime}\}\cup\{y^{\prime}\}\right\rangle$.  Let $$O=B_{y,S\left\langle Y \right\rangle}^+\cup\{ur:r\in  (X-x)\cup\{x^{\prime}\}\}\cup\{vx,x^{\prime}y\}\mbox{ and}$$ $$I=B_{x,S\left\langle X \right\rangle}^-\cup \{rx:r\in (Y-y)\cup\{u\}\}\cup\{xy,yv\}.$$

    
First we consider the case that $|H(y)|=1$, that is, $y=y^{\prime}$. Recall that $v$ has in-degree two and $e$ is an out-arc of $x^{\prime}$ in $Q$ which is distinct from  $x^{\prime}y$. 
Let $z$ be an in-neighbor of $v$ in $Q-y$. Note that  that $z$ belongs to some $H(r)$ with
$r\in Y\cup Z\cup \{u\}$. If $z\in V(Q^{\prime})$, that is, $z$ belongs to $(Y-y)\cup\{u\}$, then $(O\cup\{zv\},I\cup\{e\})$ is a good $(u,v)$-pair in $Q^{\prime}$. See Figure \ref{fig-lemma55} (a). Otherwise, $(O\cup\{yz,zv\},I\cup\{e,zx\})$ is a good $(u,v)$-pair in $Q\left\langle V(Q^{\prime})\cup\{z\}\right\rangle$.  In both cases, $Q$ has a good $(u,v)$-pair by Lemma \ref{(D-X)->D}.
	\begin{figure}[H]
		\subfigure{\begin{minipage}[t]{0.48\linewidth}
				\centering\begin{tikzpicture}[scale=0.35]
					\foreach \i in {(5,0),(0,0),(-5,0)}{\draw[ line width=0.8pt] \i ellipse [x radius=50pt, y radius=80pt];}
					\coordinate [label=center:$Y$] () at (-5,4);
					\coordinate [] () at (-0,4);
					\coordinate [label=center:$X\cup\{x^{\prime}\}$] () at (5,4);
					\coordinate [label=center:$e$] () at (6.3,-1);
					\draw[-stealth,line width=1.8pt] (-2,3.2) -- (2,3.2); 
					\filldraw[black](0,1) circle (3pt)node[label=above:$u$](u){};
					\filldraw[black](0,-1.5) circle (3pt)node[label=below:$v$](v){};
					\filldraw[black](-5,1) circle (3pt)node[label=above:$z$](z){};
					\filldraw[black](-5,-1.5) circle (3pt)node[label=below:$y$](y){};
					\filldraw[white](-5,-4) circle (3pt)node(y1){};
					\filldraw[black](5,1) circle (3pt)node(x0){};
					\filldraw[black](5,-1.5) circle (3pt)node[label=below:$x$](x){};
					\filldraw[black](5,-4) circle (3pt)node[label=right:$x^{\prime}$](x1){};
					\foreach \i/\j/\c/\t/\a in {
						z/x/blue/0/0.8,
						u/x/blue/0/0.8,
						x0/x/blue/0/0.8,
						y/v/blue/0/0.8,
						x/y/blue/30/0.8,
						x1/x0/blue/-30/0.8
					}{\path[draw,dashed, \c, line width=\a] (\i) edge[bend left=\t] (\j);}
				\foreach \i/\j/\c/\t/\a in {
					y/z/red/0/1.5,
					z/v/red/0/1.5,
					u/x0/red/0/1.5,
					u/x1/red/0/1.5,
					v/x/red/0/1.5,
					x1/y/red/30/1.5,
					x1/y1/white/30/1.5
				}{\path[draw, \c, line width=\a] (\i) edge[bend left=\t] (\j);}
				\end{tikzpicture}\caption*{(a) $|H(y)|=1$}\end{minipage}}
		\subfigure{\begin{minipage}[t]{0.48\linewidth}
				\centering\begin{tikzpicture}[scale=0.35]
					\foreach \i in {(5,0),(0,0),(-5,0)}{\draw[ line width=0.8pt] \i ellipse [x radius=50pt, y radius=80pt];}
					\coordinate [label=center:$Y$] () at (-5,4);
					\coordinate [] () at (-0,4);
					\coordinate [label=center:$X\cup\{x^{\prime}\}$] () at (5,4);
					\draw[-stealth,line width=1.8pt] (-2,3.2) -- (2,3.2); 
					\filldraw[black](0,1) circle (3pt)node[label=above:$u$](u){};
					\filldraw[black](0,-1.5) circle (3pt)node[label=below:$v$](v){};
					\filldraw[black](-5,1) circle (3pt)node[label=above:$z$](z){};
					\filldraw[black](-5,-1.5) circle (3pt)node[label=below:$y$](y){};
					\filldraw[black](-5,-4) circle (3pt)node[label=left:$y^{\prime}$](y1){};
					\filldraw[black](5,1) circle (3pt)node(x0){};
					\filldraw[black](5,-1.5) circle (3pt)node[label=below:$x$](x){};
					\filldraw[black](5,-4) circle (3pt)node[label=right:$x^{\prime}$](x1){};
					\foreach \i/\j/\c/\t/\a in {
						z/x/blue/0/0.8,
						x0/x/blue/0/0.8,
						u/x1/blue/0/0.8,
						y/v/blue/0/0.8,
						x/y/blue/30/0.8,
						x1/y1/blue/30/0.8,
						y1/z/blue/30/0.8
					}{\path[draw, dashed,\c, line width=\a] (\i) edge[bend left=\t] (\j);}
				\foreach \i/\j/\c/\t/\a in {
					y/z/red/0/1.5,
					u/x0/red/0/1.5,
					u/x/red/0/1.5,
					y1/v/red/0/1.5,
					v/x1/red/0/1.5,
					x/y1/red/30/1.5,
					x1/y/red/30/1.5
				}{\path[draw, \c, line width=\a] (\i) edge[bend left=\t] (\j);}
				\end{tikzpicture}\caption*{(b) $|H(y)|\geq 2$}\end{minipage}}
			\caption{Figures in Lemma \ref{uneqvsameQ1}. The red fat arcs and the blue dashed arcs indicate the out- and in-branchings, respectively.}\label{fig-lemma55}
	\end{figure}
	So it suffices to consider the case that $y$ and $y^{\prime}$ are two distinct vertices. First we adjust $O$ by letting $O^{\prime}=(O-\{vx,ux^{\prime}\})\cup \{vx^{\prime},ux,xy^{\prime},y^{\prime}v\}$ and adjust $I$ by letting $I^{\prime}=(I-ux)\cup \{ux^{\prime},x^{\prime}y^{\prime}\}$. If $|Y|\geq 2$ or $|X|\geq 2$, then pick one out-neighbor $z$ of $y$ (consequently, the out-neighbor of $y^{\prime}$) in $(X-x)\cup(Y-y)$. Such neighbor exists as there is only one arc $xy$ entering initial component of $S-xy$. Then $(O^{\prime},I^{\prime}\cup\{y^{\prime}z\})$ is a good $(u,v)$-pair in the digraph $Q^{\prime}$. See Figure \ref{fig-lemma55} (b). For the case that $|Z|\geq 1$, let $z\in Z$. Observe that $y$ dominates all vertices in $Z$ and each vertex of $Z$ dominates $x$. Then $(O^{\prime}\cup\{yz\},I^{\prime}\cup\{y^{\prime}z,zx\})$ is a good $(u,v)$-pair in the digraph $Q\left\langle V(Q^{\prime})\cup\{z\}\right\rangle$. In both cases, $Q$ has a good $(u,v)$-pair by Lemma \ref{(D-X)->D}. So we may assume that $Z=\emptyset$, $X=\{x\}$ and $Y=\{y\}$, which implies that $S=uxyu$. Since each of $H(u),H(x)$ and $H(y)$ has order at least 2 (recall that $u$ and $v$ are two distinct vertices in $H(u)$), the digraph $Q$ is 2-arc-strong, which contradicts our assumption.
      \end{proof}

  It remains to consider the case when $u$ and $v$ correspond to distinct vertices of $S$ and hence $H(u)\neq H(v)$. 

\begin{lem}\label{shareonearc}
  Let $S$ be a strong semicomplete digraph on $s\geq 2$ vertices and let $H_1,\ldots,H_s$ be arbitrary digraphs. Let $Q=S[H_1,\ldots,H_s]$ and let $u,v$ be two vertices of $Q$ with $H(u)\neq H(v)$ and $d_Q^+(u)\geq 2$. Suppose that $S$ has no good $(u_S,v_S)$-pair but that it has branchings $B_{u_S,S}^+,B_{v_S,S}^-$ such that $A(B_{u_S,S}^+)\cap A(B_{v_S,S}^-)=\{xy\}$. Suppose that there is no good $(u,v)$-pair in $Q$. If $|H(x)|\geq 2$ and there exists a vertex $w$ in $H(x)$ with $d_{Q}^+(w)\geq 2$, then the following holds:
  \begin{itemize}
  \item[(a)] $x=u,y=v$.
  \item[(b)]  Each vertex in $H(u)-u$ (resp., $H(v)-v$) has in-degree (resp., out-degree) exactly one in $Q$.
    \item[(c)] $Q$ is isomorphic to one of the digraphs shown in Table \ref{exceptions} (d)-(f).
  \end{itemize}
\end{lem}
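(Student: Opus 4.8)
The plan is to argue contrapositively: starting from the given branchings $B_{u_S}^+,B_{v_S}^-$ of $S$ that meet in the single arc $xy$, I would build a good $(u,v)$-pair in $Q$ and show the construction can fail only when (a)--(c) hold. First I fix the global shape. Since $S$ has no good $(u_S,v_S)$-pair but has branchings sharing exactly $\{xy\}$, Lemma~\ref{noGP} places $(S,u_S,v_S)$ in type $A$ with $xy$ a backward arc, so I may work with a partition $V_1,\dots,V_{2\alpha+1}$ of $V(S)$ in which $v_S\in V_2$, $u_S\in V_{2\alpha}$, and $xy$ is the unique arc from its block $V_{2\alpha+2-i}$ to $V_{2\alpha-i}$. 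The guiding observation is that in $S$ the arc $xy$ is a true bottleneck which forces $B_{u_S}^+$ and $B_{v_S}^-$ to collide, whereas in $Q$ this single arc is replaced by the full set of arcs from $H(x)$ to $H(y)$; the hypotheses $|H(x)|\ge 2$ and $d_Q^+(w)\ge 2$ for some $w\in H(x)$ are exactly what provide spare capacity in this bundle.

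Concretely, lifting $B_{u_S}^+$ and $B_{v_S}^-$ to $Q$ as in Lemma~\ref{goodinD} yields an out-branching $O$ from $u$ and an in-branching $I$ to $v$ whose only common arc is the representative arc from $x$ to $y$: the out-branching reaches the copies of $H(y)$ from $x$, while the in-branching sends all of $H(x)$ to the copy $y$. The whole task is then to reroute exactly one of these two crossings of the $H(x)\to H(y)$ bundle so that the trees become arc-disjoint while staying spanning and acyclic. Here one wants to let $O$ enter $H(y)$ from one vertex of $H(x)$ and let $I$ leave $H(x)$ from a different vertex (using a second copy of $H(x)$, an internal arc of $H(x)$, or the extra out-arc guaranteed by $d_Q^+(w)\ge 2$, and a second copy or extra arc on the $H(y)$ side), and to check that no directed cycle is introduced; vertices outside the relevant blocks are reattached at the end by Lemma~\ref{(D-X)->D}. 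Whenever such a rerouting exists, $Q$ has a good $(u,v)$-pair, contradicting the hypothesis.

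I would then read off (a)--(c) from the ways this can be blocked. For (a): if $x\neq u$ or $y\neq v$ --- that is, if the tail of $xy$ is not the representative of $u$'s block or its head not that of $v$'s block --- then the part of $O$ lying before $x$, respectively the part of $I$ lying after $y$, supplies an additional vertex through which the rerouted crossing can be absorbed, so the construction goes through; hence $x=u$ and $y=v$. Granting this, (b) follows similarly: a vertex of $H(u)-u$ of in-degree at least $2$, or of $H(v)-v$ of out-degree at least $2$, would furnish the extra arc needed to attach the rerouted copy, so all these vertices have in-degree (resp. out-degree) one. Finally, under (a) and (b) the remaining freedom collapses: the degree-one conditions force $H(u)-u$ and $H(v)-v$ to depend on a single common neighbour $z$, the other blocks to be independent sets $\overline{K}_t$ or single vertices, and $S$ to be $C_3=RT_3$, $RT_4$ or $RT_5$ in the arrangement of Definition~\ref{typeAB}; matching these against Table~\ref{exceptions} gives precisely the digraphs of (d)--(f), with $d_Q^+(w)\ge 2$ ruling out every larger or differently shaped $S$ because in those a genuine second crossing always survives.

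The step I expect to be hardest is the structural identification in (c) together with the acyclicity bookkeeping underlying (a) and (b). One must verify for each candidate modification of $O$ and $I$ that in-degrees and out-degrees remain correct and no cycle appears, while simultaneously tracking how the chosen crossing interacts with the other backward arcs $x_jy_j$ of the type-$A$ chain; and then one must show, essentially by exhaustion against Definition~\ref{typeAB} and Lemma~\ref{specialexceptions}, that once every rerouting is blocked the partition can involve only the few blocks occurring in $RT_5$ and that the non-root blocks are exactly the $\overline{K}_t$ and cut vertex $z$ listed in the table.
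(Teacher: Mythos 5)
Your opening moves do match the paper's: lift $(B_{u_S,S}^+,B_{v_S,S}^-)$ to branchings of $Q$ whose only common arc is the representative arc from $x$ to $y$ (exactly the construction of Lemma \ref{goodinD}), then try to reroute one of the two crossings of the $H(x)\to H(y)$ bundle using a second vertex $x_2\in H(x)$, an internal arc of $H(x)$, or the spare out-arc of $w$. But the proposal has a genuine structural error and, beyond it, the load-bearing steps are asserted rather than proved. The error: Lemma \ref{noGP} does \emph{not} place $(S,u_S,v_S)$ in type A. The hypothesis that some pair of branchings of $S$ shares exactly one arc only excludes type B with $\beta\geq 2$ (where every pair must share all $\beta$ backward arcs); type B with $\beta=1$ remains entirely possible, and that is precisely the case in which the conclusions of the lemma are realized. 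For the exceptional digraphs of Table \ref{exceptions} (d)--(f) the tuple $(S,u_S,v_S)$ is of type B with $\beta=1$ and backward arc $u_Sv_S$ (for $C_3[H(u),H(v),z]$ take $V_1=\{v_S,z\}$, $V_2=\{u_S\}$), and it is not of type A: with $\alpha=1$ both roots lie in $V_2$ while the backward arc joins $V_3$ to $V_1$, so it cannot be $u_Sv_S$, and $\alpha\geq 2$ requires at least five blocks and three backward arcs, which $C_3$, $RT_4$ and $RT_5$ in these configurations do not admit. Consequently your final step --- "exhaustion against Definition \ref{typeAB}" carried out inside a type-A partition $V_1,\dots,V_{2\alpha+1}$ --- can never produce the digraphs (d)--(f), i.e., can never establish (c).

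The second problem is that the derivation of (a) and (b) is exactly where the difficulty lies, and your proposal replaces it with the sentence that the part of $O$ before $x$ (or of $I$ after $y$) "supplies an additional vertex through which the rerouted crossing can be absorbed, so the construction goes through." Whether a swap such as replacing $x_2y$ by $x_2y'$ (for an out-neighbor $y'$ of $w=x_1$ outside $H(x)\cup\{y\}$) creates a directed cycle is not bookkeeping; it is governed by two path claims which the paper must prove for \emph{every} such $y'$: (A) $B_{u,Q}^+$ contains no $(y',x_2)$-path, and (B) $B_{v,Q}^-$ contains a $(y',x_1)$-path. Each claim is established by exhibiting, from its failure, an alternative good $(u,v)$-pair, and the two claims are then combined with a further idea absent from your proposal: the lifting can be performed with an \emph{arbitrary} in-neighbor $u_I$ of $u$ outside $H(u)$, and the identity of the predecessor $z$ of $x_1$ on the path from (B) forces $u=x_1=x$, $z=u_I$, hence the uniqueness of the external in-neighbor of every vertex of $H(u)-u$, and finally $v=y$. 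Statement (c) then still needs its own argument (an in-branching $I_z$ rooted at $z$ in $Q-H(u)-v$, and Lemma \ref{OutbranPath} to produce the single in-neighbor $z^-$ of $z$, which becomes the $\overline{K}_t$ of Table \ref{exceptions} (e)--(f)). So while your first two paragraphs describe the right setup, the proof as proposed would both miss the correct structural frame (type B, $\beta=1$) and leave unproved the claims on which (a), (b) and (c) all rest.
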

\begin{proof}
	 It follows by Remark \ref{Remark-uSvS} that $u_S=u$ and $v_S=v$. So the vertices $u,v,x$ and $y$ all  belong to $V(S)$. Analogous to the construction in the proof of Lemma \ref{goodinD}, for a given pair $(B_{u,S}^+,B_{v,S}^-)$ of $S$ with $A(B_{u,S}^+)\cap A(B_{v,S}^-)=\{xy\}$ and any in-neighbor $u_I$ of $u$ in $Q-H(u)$, one can obtain branchings $B_{u,Q}^+$ and $B_{v,Q}^-$ in $Q$ such that $A(B_{u,Q}^+)\cap A(B_{v,Q}^-)=\{xy\}$ as follows: construct $B_{u,Q}^+$ from $B_{u,S}^+$ by adding arcs $\{ar:ab\in A(B_{u,S}^+),r\in H(b)\}$ and $\{u_Ir:r\in H(u)-u\}$ and construct $B_{v,Q}^-$ from $B_{v,S}^-$ by adding arcs $\{rb:ab\in A(B_{v,S}^-),r\in H(a)\}$ and $\{rv_O:r\in H(v)-v\}$, where $v_O$ is an out-neighbor of $v$ in $S$. The vertices $u_I$ and $v_O$ exist as $S$ is strong. Note that  $\{rv_O:r\in H(v)-v\}$ and $\{u_Ir:r\in H(u)-u\}$ share no common arc as $v_O=u$ if $v_O\in H(u)$. Further, by construction, none of $B_{u,Q}^+$ and $B_{v,Q}^-$ contain an arc in $\cup_{i\in[t]}A(H_i)$.  
	
         Assume that $\{x_1,x_2\}\subseteq H(x)$ with $x_1=x=w$ and $d_{Q}^+(x_1)\geq 2$ by Remark \ref{Remark-copyofD}. It should be noted that $u=x=x_1$ if $u\in H(x)$ as $u,x\in V(S)$. By the construction of $B_{u,Q}^+$ and $B_{v,Q}^-$, the arc $x_1y$ belongs to $A(B_{u,Q}^+)$, and $x_1y$ and $x_2y$ belong to $A(B_{v,Q}^-)$.  If $x_1$ has an out-neighbor in $H(x)$, w.l.o.g say
         $x_1x_2\in H(x)$, then $B_{u,Q}^+$ and $(B_{v,Q}^--x_1y)\cup \{x_1x_2\}$ form a good $(u,v)$-pair in $Q$, contradicting our assumption.  So  $x_1$ has out-degree zero in $H(x)$ and then there is an out-neighbor $y^{\prime}$ of $x_1$ in $Q-H(x)-y$. Note that as the arcs $x_1y, x_2y$ belong to the in-branching $B_{v,Q}^-$, we have  $x_1y^{\prime},x_2y^{\prime}\notin A(B_{v,Q}^-)$. Moreover, by the way we constructed $B_{u,Q}^+$ and the fact that $x_1=x$ belongs to $V(S)$, we have $x_2y^{\prime}\notin B_{u,Q}^+$ and $x_1y^{\prime}$ may belong to $B_{u,Q}^+$. Construct $O$ from $B_{u,Q}^+$ by deleting the in-arcs of $y,y^{\prime}$ in $B_{u,Q}^+$ and adding arcs  $x_1y^{\prime},x_2y$ and let $I=(B_{v,Q}^--x_2y)\cup\{x_2y^{\prime}\}$. Let $(O^{\prime},I^{\prime})$ be a pair obtained from $(O,I)$ by exchanging the four edges between $\{x_1,x_2\}$ and $\{y,y^{\prime}\}$, that is, $O^{\prime}=(O-\{x_1y^{\prime},x_2y\}\cup\{x_1y,x_2y^{\prime}\})$ and  $I^{\prime}=(I-\{x_1y,x_2y^{\prime}\}\cup\{x_1y^{\prime},x_2y\})$ (see Figure \ref{fig-lemma44}).

\begin{figure}[H]
	\subfigure{\begin{minipage}[t]{0.33\linewidth}
			\centering\begin{tikzpicture}[scale=0.5]
				\draw[ line width=0.8pt] (0,-0.5) ellipse [x radius=30pt, y radius=60pt];
				\coordinate [label=center:$H(x)$] () at (0,2.5);
				\filldraw[black](0,1) circle (3pt)node[label=below:$x_1$](x1){};
				\filldraw[black](0,-1) circle (3pt)node[label=below:$x_2$](x2){};
				\filldraw[black](2,1) circle (3pt)node[label=below:$y$](y1){};
				\filldraw[black](2,-1) circle (3pt)node[label=below:$y^\prime$](y2){};
				\filldraw[white](-2,1) circle (0.1pt)node(a1){};
				\filldraw[white](-2,0) circle (0.1pt)node(a2){};
				\filldraw[white](4,1) circle (0.1pt)node(b1){};
				\filldraw[white](4,0) circle (0.1pt)node(b2){};
				\foreach \i/\j in {a1/x1,a2/x1,x1/y1,x2/y1}{\path[draw, blue, dashed, line width=0.8pt] (\i) edge (\j);}
				\foreach \i/\j in {x1/y2,x2/y2}{\path[draw, line width=0.8pt] (\i) edge (\j);}
				\foreach \i/\j in {y1/b1,y1/b2}{\path[draw, red, line width=1.5pt] (\i) edge (\j);}
				\path[draw, red, line width=1.5pt] (x1) edge[bend left=30] (y1);
			\end{tikzpicture}\caption*{(a) $(B^+_{u,Q},B^-_{v,Q})$}\end{minipage}}
	\subfigure{\begin{minipage}[t]{0.33\linewidth}
			\centering\begin{tikzpicture}[scale=0.5]
				\draw[ line width=0.8pt] (0,-0.5) ellipse [x radius=30pt, y radius=60pt];
				\coordinate [label=center:$H(x)$] () at (0,2.5);
				\filldraw[black](0,1) circle (3pt)node[label=below:$x_1$](x1){};
				\filldraw[black](0,-1) circle (3pt)node[label=below:$x_2$](x2){};
				\filldraw[black](2,1) circle (3pt)node[label=below:$y$](y1){};
				\filldraw[black](2,-1) circle (3pt)node[label=below:$y^\prime$](y2){};
				\filldraw[white](-2,1) circle (0.1pt)node(a1){};
				\filldraw[white](-2,0) circle (0.1pt)node(a2){};
				\filldraw[white](4,1) circle (0.1pt)node(b1){};
				\filldraw[white](4,0) circle (0.1pt)node(b2){};
				\filldraw[white](3,0) circle (0.1pt)node(c){};
				\foreach \i/\j in {a1/x1,a2/x1,x1/y1,x2/y2}{\path[draw, blue, dashed, line width=0.8pt] (\i) edge (\j);}
				\foreach \i/\j in {x1/y2,x2/y1,y1/b1,y1/b2}{\path[draw, red, line width=1.5pt] (\i) edge (\j);}
				\path[draw, red, dotted, line width=1.5pt] (c) edge (y2);
			\end{tikzpicture}\caption*{(b) $(O,I)$}\end{minipage}}
		\subfigure{\begin{minipage}[t]{0.33\linewidth}
			\centering\begin{tikzpicture}[scale=0.5]
				\draw[ line width=0.8pt] (0,-0.5) ellipse [x radius=30pt, y radius=60pt];
				\coordinate [label=center:$H(x)$] () at (0,2.5);
				\filldraw[black](0,1) circle (3pt)node[label=below:$x_1$](x1){};
				\filldraw[black](0,-1) circle (3pt)node[label=below:$x_2$](x2){};
				\filldraw[black](2,1) circle (3pt)node[label=below:$y$](y1){};
				\filldraw[black](2,-1) circle (3pt)node[label=below:$y^\prime$](y2){};
				\filldraw[white](-2,1) circle (0.1pt)node(a1){};
				\filldraw[white](-2,0) circle (0.1pt)node(a2){};
				\filldraw[white](4,1) circle (0.1pt)node(b1){};
				\filldraw[white](4,0) circle (0.1pt)node(b2){};
				\filldraw[white](3,0) circle (0.1pt)node(c){};
				\foreach \i/\j in {a1/x1,a2/x1,x2/y1,x1/y2}{\path[draw, blue, dashed, line width=0.8pt] (\i) edge (\j);}
				\foreach \i/\j in {}{\path[draw, line width=0.8pt] (\i) edge (\j);}
				\foreach \i/\j in {x1/y1,x2/y2,y1/b1,y1/b2}{\path[draw, red, line width=1.5pt] (\i) edge (\j);}
				\path[draw, red, dotted, line width=1.5pt] (c) edge (y2);
			\end{tikzpicture}\caption*{(c) $(O^\prime,I^\prime)$}\end{minipage}}
		\caption{The red fat arcs are in  $B_{u,Q}^+, O$ and $O^{\prime}$, respectively, and the blue dashed arcs are in $B_{v,Q}^-, I$ and $I^{\prime}$, respectively. The red dotted arrow corresponds to the in-arc of $y^{\prime}$ in $B_{u,Q}^+$, possibly the in-arc is $x_1y^{\prime}$.}\label{fig-lemma44}
\end{figure}

	Next we show that for each out-neighbor $y^{\prime}$ of $x_1$ in $Q-H(x)-y$, the following statements hold.
	
\textbf{(A)} There is no $(y^{\prime},x_2)$-path in $B_{u,Q}^+$.

\textbf{(B)} There is a $(y^{\prime},x_1)$-path in $B_{v,Q}^-$.

	Suppose to the contrary that for some out-neighbor $y^{\prime}$ of $x_1$ there is a $(y^{\prime},x_2)$-path $P_{y^{\prime},x_2}$ in $B_{u,Q}^+$ (and consequently also in $O$ and $O^{\prime}$). By the construction of $B_{u,Q}^+$, if $u\notin H(x)$, then all vertices in $H(x)$ are dominated by the same vertex in $B_{u,Q}^+$ and if $u\in H(x)$, then $u=x_1$ by our assumption.  In both cases $x_1$ and $y$ are not on $P_{y^{\prime},x_2}$ as the path belongs to the out-branching $B_{u,Q}^+$. Then $B_{u,Q}^+$ does not contain a path from $y$ to any vertex in $P_{y^{\prime},x_2}-y^{\prime}$  as it is an out-branching.  Observe that there is no $(y,x_1)$-path in the in-branching $B_{v,Q}^-$ as $x_1y$ is an arc of $B_{v,Q}^-$ and by the construction of $B_{v,Q}^-$, the vertex $x_2$ has in-degree zero in $B_{v,Q}^-$, thus $I$ is an in-branching rooted at $v$ in $Q$.  There is clearly no $(y,x_1)$-path in the out-branching $B_{u,Q}^+$ as $x_1y\in B_{u,Q}^+$. Combining this and the fact that $x_1$ is the only in-neighbor of $y^{\prime}$ in $O$, there is no $(y,y^{\prime})$-path in $O$ and then $x_2y$ belongs to no cycle in $O$. By our assumption, $(O,I)$ is not a good $(u,v)$-pair in $Q$ and hence $O$ must contain a cycle and by construction, this cycle contains the arc $x_1y^{\prime}$, that is, there is a $(y^{\prime},x_1)$-path in $O$ (and hence in $B_{u,Q}^+$). Therefore, $x_1\neq u$ and $x_1y^{\prime}\notin B_{u,Q}^+$. 
	
	By the construction of $B_{u,Q}^+$ and $O$, we have that the predecessor $x_2^-$ of $x_2$ on the path $P_{y^{\prime},x_2}$ dominates $x_1$ in $B_{u,Q}^+$ (and consequently in $O$). If there is no $(y^{\prime},x_1)$-path in $B_{v,Q}^-$, then $(B_{u,Q}^+,(B_{v,Q}^--x_1y)\cup\{x_1y^{\prime}\})$ is a good $(u,v)$-pair in $Q$, which contradicts our assumption. So it suffices to consider the case that there is a $(y^{\prime},x_1)$-path $P_{y^{\prime},x_1}$ in $B_{v,Q}^-$. Let $x_1^-$ be the predecessor of $x_1$ on the path. Clearly $x_1^-\neq x_2^-$ as $x_2^-x_1\in B_{u,Q}^+$ and $x_1^-x_1\in B_{v,Q}^-$. Hence $x_1^-x_2\notin B_{u,Q}^+$ as $x_2^-x_2\in B_{u,Q}^+$. Then $B_{u,Q}^+$ and  $(B_{v,Q}^--x_1y-x_1^-x_1)\cup\{x_1y^{\prime},x_1^-x_2\}$ form a good $(u,v)$-pair in $Q$, a contradiction again. Therefore, (A) holds.

 Suppose that there is an out-neighbor $y^{\prime}$ such that there is no $(y^{\prime},x_1)$-path. It follows by (A) that there is no $(y^{\prime},x_2)$-path in $B_{u,Q}^+$ and then  $(O^{\prime},I^{\prime})$ is a good $(u,v)$-pair in $Q$. This contradicts our assumption. So (B) holds.

For a given out-neighbor $y^{\prime}$ of $x_1$ in $Q-H(x)-y$, let $z$ be the predecessor of $x_1$ on the path $P_{y^{\prime},x_1}$ in $B_{v,Q}^-$. Here it should be noted that there is no $(y,z)$-path in $B_{v,Q}^-$ otherwise $x_1y$ would belong to a cycle of the in-branching $B_{v,Q}^-$. So if $zx_2\notin B_{u,Q}^+$, then $O^{\prime}$ and $(I^{\prime}-zx_1)\cup \{zx_2\}$ form a good $(u,v)$-pair in $Q$, a contradiction again. Thus $zx_2\in B_{u,Q}^+$. By the construction of $B_{u,Q}^+,B_{v,Q}^-$ and the fact that $zx_1\in B_{v,Q}^-,zx_2\in B_{u,Q}^+$, we have that $u=x_1=x=u_S$ (in $V(S)$) and $z=u_I$. Further, by the arbitrariness of the in-neighbor $u_I$ of $u$, we may assume that each vertex in $H(x)$, i.e., $H(u)$, has exactly one in-neighbor $z$ in $Q-H(u)$.  Then (B) implies that $y$ is the only possible vertex which $v$ can be as every other vertex has out-degree one in $B_{v,Q}^-$.  This establishes (a) in the statement of the lemma.

If there exists $x_i\in H(u)-u$ such that $x_i$ has in-degree at least two in $Q$, we can assume w.l.o.g that $x_i=x_2$ and $x_3\in H(u)$ is an in-neighbor  of $x_2$. By the previously established fact that $x_1$ has no out-neighbor in $H(x)=H(u)$, we must have $x_3\neq x_1$. Then $(O^{\prime}-zx_2)\cup\{x_3x_2\}$ and $(I^{\prime}-zx_1)\cup \{zx_2\}$ form a good $(u,v)$-pair in $Q$, a contradiction. Thus each vertex in $H(u)-u$ has exactly one in-neighbor, namely $z$ in $Q$. Since for each out-neighbor $y^{\prime}$ of $u$ in $Q-y$, there is no $(y^{\prime},x_2)$-path in $B_{u,Q}^+$ by (A), we have that there is a $(y,z)$-path in $B_{u,Q}^+$. More precisely, $yz\in A(B_{u,Q}^+)$. Otherwise, there is an internal vertex $w$ in the $(y,z)$-path and then $w$ has a path to $x_2$ and $u$ dominates $w$, which contradicts (A).

Suppose that there exists a vertex $v^{\prime}\in H(v)-v$ with out-degree at least two in $Q$. Then $u$ dominates $v^{\prime}$ and $v^{\prime}$ dominates $z$. Let $z^{\prime}$ be another out-neighbor of $v^{\prime}$ in $Q$ which is distinct from$z$. Choose $v^{\prime}$ as the vertex $y^{\prime}$, construct an in-branching rooted at $v$ from $I$ by deleting the out-arc of $y^{\prime}$ in $I$ and adding the arc $y^{\prime}z^{\prime}$. Since $x_2y^{\prime}\in A(I)$ and $x_2$ has in-degree zero in $I$, the arc $y^{\prime}z^{\prime}$ does not belong to a cycle in the new resulting in-branching. Again, by the construction of $O$ and the fact $x_1y\in A(B_{u,Q}^+)$, there is no $(y,x_1)$-path in $O$. So the in-branching and $(O-vz)\cup\{y^{\prime}z\}$ form a good $(u,v)$-pair in $Q$, a contradiction again. Thus each vertex in $H(v)-v$ out-degree exactly one in $Q$, implying that (b) in the statement of the lemma holds.

Next we show the possible structures of $Q$. Recall that $z$ is the only in-neighbor of each vertex of $H(u)-u$, consequently, is the only in-neighbor of $u$ in $S$ as $S$ is strong. Since $z$ is the predecessor of $x_1$ on the path $P_{y^{\prime},x_1}$ in the  branching $B_{v,Q}^-$, we get that $z\neq v$ and then $Q$ is not isomorphic to one of the digraphs in Figure \ref{fig-SDexceptions} (c)-(f). Further, by the definition of a composition, $|H(z)|=1$. Since $S$ has no good $(u,v)$-pair but it has branchings $B_{u,S}^+, B_{v,S}^-$ such that $A(B_{u,S}^+)\cap A(B_{v,S}^-)=\{xy\}=\{uv\}$, Lemma \ref{noGP} shows that $(S,u,v)$ is of type B with partition $V_1,V_2$ ($\beta=1$) and backward arc set $\{uv\}$. For the case that $z\in V_2$, we clearly have $V_1=\{v\}$. We next show that if $z\in V_1$, then $V_2=\{u\}$. Otherwise, as $z$ is the only in-neighbor of $u$ in $S$, we have that there is no arc from $V_2-u$ to $V_1\cup\{u\}$ in $S$, contradicting the fact that $S$ is strong. Recall that $z\neq v$ and $|H(z)|=1$. So if $z\in V_1$, then $Q=C_3[H(u),H(v),z]$ (the digraph in Table \ref{exceptions} (d)).  For the case that $z\in V_2$ and $|H(v)|\geq2$, since $H(v)-v$ has exactly one out-neighbor in $Q$, we also have $Q=C_3[H(u),H(v),z]$.  Here it should be noted that in both cases $S$ is a 3-cycle, that is, $z$ does not dominate $v$  and $u$ does not dominate $z$ in $S$. Otherwise, it is not difficult to check that $Q$ has a good $(u,v)$-pair by the assumption $d_Q^+(u)\geq 2$ and $|H(u)|\geq 2$, a contradiction.

It suffices to consider the case that $z\in V_2$, $|V_1|=|H(v)|=1$ and $V(Q-H(u))-\{v,z\}$ is not empty. In this case, as $z$ is the only in-neighbor of $u$ in $S$ and $A(B_{u,S}^+)\cap A(B_{v,S}^-)=\{uv\}$, there is an in-branching rooted at $z$ in $Q-H(u)-v$, say $I_z$. If there is a $(u,z)$-path $P$ in $Q-v$ which is arc-disjoint with $I_z$, then one can construct a good $(u,v)$-pair in $Q$ as follows, which contradicts our assumption: Let $u_O$ be the successor of $u$ on $P$ and  let $I_z\cup \{zu,uv\}\cup\{ru_O:r\in H(u)-u\}$ be the in-branching and construct the out-branching from the path $P$ by adding arcs $\{zr:r\in H(u)-u\}\cup\{vr:r\in Q-H(u)-P\}$ and an arc from $H(u)-u$ to $v$. 

So we may assume that there is no such path $P$, which implies that $u$ does not dominate $z$ and, for any $w\in Q-H(u)-\{z,v\}$ there is no in-branching rooted at $z$ which is arc-disjoint from some $(w,z)$-path in $Q-H(u)-v$. In particular, no such branching and path in $S-\{u,v\}$. It follows by (the symmetrical form of) Lemma \ref{OutbranPath} that $z$ has exactly one in-neighbor $z^-$ in $S-\{u,v\}$. Further, if $|H(z^-)|\geq 2$, then $z$ is the only out-neighbor of $H(z^-)$ as there is no such branching and path in $Q-H(u)-v$. This implies that $Q$ is isomorphic to one of the digraphs shown in Table \ref{exceptions}  (e)-(f), where $z^-$ is the vertex in $S$ which corresponds to $\overline{K}_t$. 
\end{proof}

By symmetry, we have the following corollary.
\begin{coro}\label{shareonearcsymmetry}
	Let $S$ be a strong semicomplete digraph on $s\geq 2$ vertices and let $H_1,\ldots,H_s$ be arbitrary digraphs. Let $Q=S[H_1,\ldots,H_s]$ and let $u,v$ be two vertices of $Q$ with $H(u)\neq H(v)$ and $d_Q^-(v)\geq 2$. Suppose that $S$ has no good $(u_S,v_S)$-pair but that it has branchings $B_{u_S,S}^+,B_{v_S,S}^-$ such that $A(B_{u_S,S}^+)\cap A(B_{v_S,S}^-)=\{xy\}$. Suppose that $Q$ has no good $(u,v)$-pair. If $|H(y)|\geq 2$ and there exists a vertex $w$ in $H(y)$ with $d_{Q}^-(w)\geq 2$, then $\overleftarrow{Q}$ is isomorphic to one of the digraphs shown in Table \ref{exceptions} (d)-(f), where $\overleftarrow{Q}$ is obtained from $Q$ be reversing all arcs and interchanging the names of $u$ and $v$.
      \end{coro}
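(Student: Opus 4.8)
The plan is to deduce the corollary from Lemma \ref{shareonearc} by applying that lemma to the reversed-and-renamed digraph $\overleftarrow{Q}$, rather than repeating any combinatorial construction. The first step is to record that reversal interacts cleanly with the composition structure: if $Q=S[H_1,\ldots,H_s]$, then reversing all arcs yields $\overleftarrow{S}[\overleftarrow{H_1},\ldots,\overleftarrow{H_s}]$, and since the reverse of a strong semicomplete digraph is again strong and semicomplete, $\overleftarrow{Q}$ is once more a composition of a strong semicomplete digraph on $s$ vertices, so Lemma \ref{shareonearc} may legitimately be applied to it. I would also note the standard facts used implicitly throughout: reversing all arcs turns out-branchings into in-branchings and vice versa and interchanges in- and out-degrees, so an arc-disjoint pair $(B^+_u,B^-_v)$ becomes an arc-disjoint pair of the opposite type. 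After one also interchanges the names of $u$ and $v$, this shows that $Q$ has a good $(u,v)$-pair if and only if $\overleftarrow{Q}$ does; in particular the hypothesis ``$Q$ has no good $(u,v)$-pair'' transfers to $\overleftarrow{Q}$.

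Next I would translate each remaining hypothesis into the corresponding hypothesis of Lemma \ref{shareonearc} for $\overleftarrow{Q}$, keeping careful track of the renaming. In the new names the root $u$ of $\overleftarrow{Q}$ is the old $v$, so $d^+_{\overleftarrow{Q}}(u)=d^-_Q(v)\geq 2$, matching the out-degree hypothesis of the lemma. Reversing the branchings $B^+_{u_S,S},B^-_{v_S,S}$ that share only the arc $xy$ produces branchings that, read in the new names, are an out-branching from $u_S$ and an in-branching to $v_S$ in $\overleftarrow{S}$ sharing only the reversed arc $yx$; hence for $\overleftarrow{Q}$ the distinguished arc is $yx$, with tail $y$ and head $x$. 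Since $S$ has no good $(u_S,v_S)$-pair, $\overleftarrow{S}$ has no good pair for the new roots. Finally the hypothesis that $|H(y)|\geq 2$ and some $w\in H(y)$ satisfies $d^-_Q(w)\geq 2$ becomes exactly $|H(y)|\geq 2$ with $d^+_{\overleftarrow{Q}}(w)\geq 2$, which is precisely the condition Lemma \ref{shareonearc} imposes on the block containing the \emph{tail} of the distinguished arc.

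With all hypotheses checked, Lemma \ref{shareonearc} applies to $\overleftarrow{Q}$ and in particular gives conclusion (c): $\overleftarrow{Q}$ is isomorphic to one of the digraphs in Table \ref{exceptions}(d)--(f). This is exactly the asserted statement, so the proof is complete. The only real work, and the main place to be careful, is the bookkeeping of the double operation (arc reversal together with the interchange of the labels $u$ and $v$): one must verify that every directional hypothesis — out-degree versus in-degree, tail versus head of the shared arc $xy$, source root versus sink root — is sent to the matching hypothesis of Lemma \ref{shareonearc}, and that the conclusion is stated for $\overleftarrow{Q}$ consistently. No new combinatorial argument beyond Lemma \ref{shareonearc} itself is required.
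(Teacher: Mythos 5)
Your proof is correct and takes exactly the paper's route: the paper derives this corollary from Lemma \ref{shareonearc} purely ``by symmetry,'' which is precisely the reversal-plus-renaming argument you spell out. Your careful bookkeeping (out-degree of the new root, the shared arc becoming $yx$ with tail $y$, the tail-block condition) is just an explicit verification of what the paper leaves implicit.
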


      \subsection{{\bf Proof of  Theorem \ref{mainthm}}}

      For convenience we repeat the statement of the theorem.\\
        
        \noindent {\bf Theorem \ref{mainthm}} Let $S$ be a strong semicomplete digraph of order $s\geq 2$ and let $H_1,\ldots,H_s$ be arbitrary digraphs. Suppose that $Q=S[H_1,\ldots,H_s]$ and $u, v$ are two arbitrary vertices of $Q$ such that $d_{Q}^+(u)\geq 2$ and $d_{Q}^-(v)\geq 2$ if $u\neq v$. Then $Q$ has a good $(u,v)$-pair if and only if it satisfies none of the following conditions.
	
	(i) $Q$ or $\overleftarrow{Q}$ is isomorphic to one of the digraphs in Table \ref{exceptions}, where $\overleftarrow{Q}$ is obtained from $Q$ be reversing all arcs and interchanging the names of $u$ and $v$. 	
	
	(ii) $(S,u_S,v_S)$ is of type A and for each backward arc $xy$, either $|H(x)|=|H(y)|=1$ or, $d_Q^+(w)=1$ for every $w\in H(x)$ if $|H(x)|\geq 2$ and $d_Q^-(w)=1$ for every $w\in H(y)$ if $|H(y)|\geq 2$.
	
	(iii) $(S,u_S,v_S)$ is of type B and there exists a backward arc $xy$ such that either $|H(x)|=|H(y)|=1$ or, $d_Q^+(w)=1$ for every $w\in H(x)$ if $|H(x)|\geq 2$ and $d_Q^-(w)=1$ for every $w\in H(y)$ if $|H(y)|\geq 2$.

\begin{proof}
  First we show that there is no good $(u,v)$-pair if one of (i)-(iii) holds. If $Q$ or $\overleftarrow{Q}$ is isomorphic to one of the digraphs described in Table \ref{exceptions} then it follows from Lemma \ref{specialexceptions} that $Q$ has no good $(u,v)$-pair. Recall that if $(S,u_S,v_S)$ is of type A (resp., type B), then any pair of branchings $B_{u_S,S}^+$ and $B_{v_S,S}^-$ of $S$ must share at least one backward arc (resp., all backward arcs) of $S$. This implies that for every good $(u,v)$-pair $(B_{u,Q}^+,B_{v,Q}^-)$ in $Q$, both $B_{u,Q}^+$ and $B_{v,Q}^-$ must use an arc from $H(x)$ to $H(y)$ for some backward arc $xy$ if $(S,a,b)$ is of type A (resp., for every backward arc $xy$ if $(S,a,b)$ is of type B). If  $|H(x)|=|H(y)|=1$ in $Q$, then there is clearly no good $(u,v)$-pair in $Q$. By symmetry, we may assume that $|H(x)|\geq 2$ and that we have $d_Q^+(w)=1$ for every $w\in H(x)$. Suppose that the arc $wy$ from $H(x)$ to $y$ is used in the out-branching $B_{u,Q}^+$. Then $w$ has out-degree zero in $Q-B_{u,Q}^+$ and thus it can not be collected into any in-branching with root $v$, a contradiction. Therefore, there is no good $(u,v)$-pair in $Q$ when (ii) or (iii) holds.

	Now suppose that none of (i)-(iii) holds. We proceed to prove that there is a good $(u,v)$-pair in $Q$. First we may assume that $Q$ is not 2-arc-strong, since otherwise there is nothing to prove by Lemma \ref{2arcstrong} and the fact that $Q$ is not isomorphic to one of the two digraphs in Table \ref{exceptions} (a).  Further, by Lemmas \ref{goodinD} and \ref{uneqvsameQ}, we may assume that there is no good $(u_S,v_S)$-pair in $S$, where $u_S$ and $v_S$ are the vertices in $S$ which  $u$ and $v$ correspond to, respectively. Recall that by Remark \ref{Remark-uSvS},  $u_S=v_S=u$ when $H(u)=H(v)$ and $u\neq v$ and, $u_S=u,v_S=v$ when $H(u)\neq H(v)$ or $u=v$. If the digraph $S$ is isomorphic to one of the digraphs in Figure \ref{fig-SDexceptions} (c)-(f), then either its composition $Q$ (or $\overleftarrow{Q}$) is isomorphic to one of the digraphs in Table \ref{exceptions} (b), (c), (g) or there is a good $(u,v)$-pair in $Q$. So we may assume that $S$ is not isomorphic to any such digraph. Then by the fact that $S$ has no good $(u_S,v_S)$-pair, there is a partition of $V(S)$ satisfying Lemma \ref{noGP}. Let $(V_1,\ldots,V_p)$ be such a partition and let  $(x_1y_1,\ldots,x_qy_q)$ be the ordering of the backward arcs such that $x_1\in V_p$ and $y_q\in V_1$. Now we are ready to apply Lemma \ref{noGP}.
	
	First we consider the case that $(S,u_S,v_S)$ is either of type A or of type B with $\beta=1$. In this case, $S$ has a $(u_S,v_S)$-pair where the branchings share only one arc. Since conditions (ii) and (iii) do not hold, we may assume that there is a backward arc $xy$ of $S$ with  $|H(x)|\geq 2$ such that there is a vertex $w\in H(x)$ with $d_Q^+(w)\geq 2$. 
	Then by Lemmas \ref{uneqvsameQ1} and \ref{shareonearc} (or by Corollary \ref{shareonearcsymmetry} if we consider the case $|H(y)|\geq 2$),  we have that $Q$ has a good $(u,v)$-pair, otherwise, the statement (i) of the theorem holds and we obtain a contradiction.
		
	So it suffices to consider the case that every pair of  out- and in-branchings $B_{u_S,S}^+, B_{v_S,S}^-$ of $S$ share at least two arcs, that is, we are in case (II) of Lemma \ref{noGP}. This implies that $(S,u_S,v_S)$ is of type B with $p=\beta+1, q=\beta, \beta\geq 2$. Since (iii) does not hold,  either $|H(x_i)|\geq 2$ or $|H(y_i)|\geq 2$, for each backward arc $x_iy_i$.
	
	Starting from the semicomplete digraph $S$ we now construct the induced subdigraph $Q^{\prime}$ of $Q$ as follows: 
	
	\textbf{(a)} For the backward arc $x_1y_1$, if $|H(y_1)|\geq 2$, then add a copy of $y_1$ to $V(Q^{\prime})$, otherwise add a copy of $x_1$ to $V(Q^{\prime})$.
          
        \textbf{(b)} For the backward arc $x_{\beta}y_{\beta}$, if $|H(x_{\beta})|\geq 2$, then add a copy of $x_{\beta}$ to $V(Q^{\prime})$, otherwise add a copy of $y_{\beta}$. 
	
      \textbf{(c)} For any $x_iy_i$ with $1<i<\beta$, add all vertices of $H(x_i)$ and $H(y_i)$ to $V(Q^{\prime})$. \\

	In the following, we use $x_i^{\prime}$ and $y_i^{\prime}$ to denote a copy of $x_i$ and $y_i$, that is, a vertex of $H(x_i)-x_i$ and $H(y_i)-y_i$, respectively. It follows by Remark \ref{Remark-copyofD} that for a vertex $w$, every  vertex in $H(w)$ can be regarded as the vertex $w$ or $w^{\prime}$. So if there is an arc $ab$ in $H(w)$, we choose $\{w,w^{\prime}\}=\{a,b\}$ and we may choose $a$ (resp., $b$) to be the vertex $w$ or $w^{\prime}$ if the arc $ab$ is useful as an out-arc of $a$ (resp., in-arc of $b$) in the argument below.
	
	Next we construct a good $(u,v)$-pair in $Q^{\prime}$ and then it follows by Lemma \ref{(D-X)->D} that $Q$ has the wanted pair. Since there is a pair of branchings $B_{u,S}^+,B_{v,S}^-$ in $S$ such that $A(B_{u,S}^+)\cap A(B_{v,S}^-)=\{x_1y_1,\ldots,x_{\beta}y_{\beta}\}$ (Lemma \ref{noGP} (II)), the subdigraph  $S\left\langle V_1\right\rangle$ has an out-branching $O$ rooted at $y_{\beta}$ which arc-disjoint from some $(y_{\beta},v)$-path $P_{y_{\beta},v}$. Similarly, the subdigraph $S\left\langle V_{\beta+1}\right\rangle$ 
contains an in-branching $I$ rooted at $x_1$ and a path  $P_{u,x_1}$ which are arc-disjoint.
Since $Q^{\prime}$ is a composition of $S$, there is an out-branching $O^{\prime}$ rooted at $y_{\beta}^{\prime}$ in $Q^{\prime}\left\langle(V_1-y_{\beta})\cup \{y_{\beta}^{\prime}\}\right\rangle$ which is arc-disjoint from the path $P_{y_{\beta},v}$ and 
and an in-branching $I^{\prime}$ in $Q^{\prime}\left\langle(V_{\beta+1}-x_1)\cup \{x_1^{\prime}\}\right\rangle$ which is arc-disjoint from the path $P_{u,x_1}$.  
Recall that there are two arc-disjoint $(y_{i-1},x_i)$-paths in $S\left\langle V_{\beta+2-i}\right\rangle$  when $y_{i-1}\neq x_i$ (the definition of type B in Definition \ref{typeAB}). By the way we defined $Q^{\prime}$ it follows that there are two arc-disjoint paths from $H(x_1)$ to $H(y_{\beta})$ in $Q^{\prime}$  as either $|H(x_i)|\geq 2$ or $|H(y_i)|\geq 2$ for each $x_iy_i$. Next we show how to construct a good $(u,v)$-pair in $Q^{\prime}$ from such a pair of arc-disjoint paths. 
	\medskip

\textbf{Case 1.} $|H(y_1)|=|H(x_{\beta})|=1$.
	
In this case the vertices $x_1,x_1^{\prime},y_{\beta}$ and $y_{\beta}^{\prime}$ belong to $V(Q^{\prime})$. Since (iii) does not hold, we may assume that $x_1$ has out-degree at least two and $y_{\beta}$ has in-degree at least two in $Q$. Let $P_1$ be an $(x_1,y_{\beta}^{\prime})$-path and let $P_2$ be an $(x_1^{\prime},y_{\beta})$-path in $Q^{\prime}$ such that they are arc-disjoint. As $|H(y_1)|=|H(x_{\beta})|=1$, we have that $y_1$ and $x_{\beta}$ belong to both $P_1$ and $P_2$. Using $P_1$ and $P_2$ one can construct a good $(u,v)$-pair $(B_{u}^+,B_{v}^-)$ in $Q^{\prime}$ as follows. Construct $B_{u}^+$ from the paths $P_{u,x_1}$, $P_1$ and the out-branching $O^{\prime}$ rooted at $y_{\beta}^{\prime}$ in $Q\left\langle(V_1-y_{\beta})\cup \{y_{\beta}^{\prime}\}\right\rangle$ by adding an in-arc of $y_{\beta}$ which is distinct from $x_{\beta}y_{\beta}$ and all arcs from $y_{\beta}$ to uncovered vertices of $V(Q^{\prime})$. By symmetry, one can construct $B_{v}^-$
from the paths $P_{y_{\beta},v}$, $P_2$ and the in-branching $I^{\prime}$ rooted at $x_1^{\prime}$ in $Q\left\langle(V_{\beta+1}-x_1)\cup \{x_1^{\prime}\}\right\rangle$ by adding an out-arc of $x_1$ which is distinct from $x_1y_1$ and all arcs from  uncovered vertices of $V(Q^{\prime})$ to $x_1$. See Figure \ref{fig:Case1}.
        
        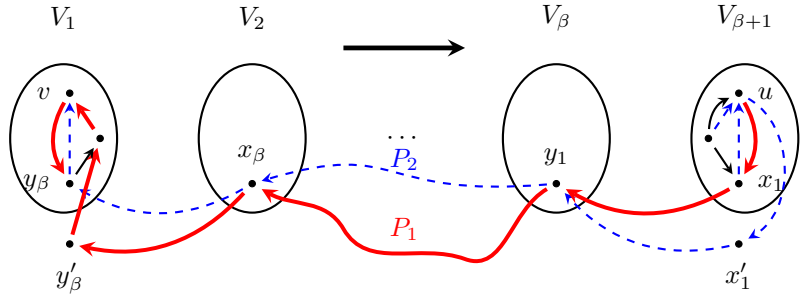
\begin{figure}[H]
				\centering\begin{tikzpicture}[scale=0.4]
					\foreach \i in {(11.2,0),(5,0),(-5,0),(-11.2,0)}{\draw[ line width=0.8pt] \i ellipse [x radius=50pt, y radius=70pt];}
					\coordinate [label=center:$V_1$] () at (-11.2,4);
					\coordinate [label=center:$V_2$] () at (-5,4);
					\coordinate [label=center:$\cdots$] () at (0,0);
					\coordinate [label=center:{\color{red}$P_1$}] () at (0,-3);
					\coordinate [label=center:{\color{blue}$P_2$}] () at (0,-0.7);
					\coordinate [label=center:$V_\beta$] () at (5,4);
					\coordinate [label=center:$V_{\beta+1}$] () at (11.2,4);
					\draw[-stealth,line width=1.8pt] (-2,3) -- (2,3); 
					\filldraw[black](11,-1.5) circle (3pt)node[label=right:$x_1$](x1){};
					\filldraw[black](-11,-1.5) circle (3pt)node[label=left:$y_\beta$](yb){};
					\filldraw[black](-11,1.5) circle (3pt)node[label=left:$v$](v){};
					\filldraw[black](-5,-1.5) circle (3pt)node[label=above:$x_\beta$](xb){};
					\filldraw[black](11,1.5) circle (3pt)node[label=right:$u$](u){};
					\filldraw[black](5,-1.5) circle (3pt)node[label=above:$y_1$](y1){};
					\filldraw[black](11,-3.5) circle (3pt)node[label=below:$x_1^{\prime}$](x2){};
					\filldraw[black](-11,-3.5) circle (3pt)node[label=below:$y_\beta^{\prime}$](y2){};
					\filldraw[black](-10,0) circle (3pt)node(a){};
					\filldraw[black](10,0) circle (3pt)node(b){};
					
					\foreach \i/\j/\c/\t/\a in {
						x1/u/blue/0/0.8,
						b/u/blue/0/0.8,
						yb/v/blue/0/0.8,
						u/x2/blue/60/0.8,
						x2/y1/blue/30/0.8,
						xb/yb/blue/30/0.8
					}{\path[draw,dashed, \c, line width=\a] (\i) edge[bend left=\t] (\j);}	
				\foreach \i/\j/\c/\t/\a in {
					y2/a/red/0/1.5,
					a/v/red/0/1.5,
					v/yb/red/-30/1.5,
					u/x1/red/30/1.5,
					x1/y1/red/30/1.5,
					xb/y2/red/30/1.5,
					b/x1/black/0/0.8,
					yb/a/black/0/0.8,
					b/u/black/30/0.8
				}{\path[draw, \c, line width=\a] (\i) edge[bend left=\t] (\j);}	
					\draw [red,line width=1.5pt] (y1)
					to [out=210,in=-20] (2,-4) to [out=-200,in=-60] (-2,-3) to [out=120, in=-30] (xb);
					\draw [blue, dashed, line width=0.8pt] (y1) .. controls (-1,-2) and (0,0) .. (xb);
                                      \end{tikzpicture}\caption{Illustrating Case 1. The vertex $y_{\beta}$ is used to collect those vertices of $V(Q^{\prime})-V_1$ which are not covered by the paths $P_1,P_{u,x_1}$ 
                                        Similarly $x_1$ is used to collect those vertices of $V(Q^{\prime})-V_{\beta+1}$ which are not covered by the paths $P_2,P_{y_{\beta},v}$.}\label{fig:Case1}
                                  \end{figure}

        Before considering Cases 2 and 3, we claim that if $|H(y_{i-1})|\geq 2$ for some $2\leq i\leq \beta$, then there is a vertex in $H(y_{i-1})$ with out-degree at least two in $Q$ (and thus also in $Q^{\prime}$ by the definition of $Q^{\prime}$). Recall that there are two arc-disjoint $(y_{i-1},x_i)$-paths in $S\left\langle V_{\beta+2-i}\right\rangle$  when $y_{i-1}\neq x_i$. So if $y_{i-1}\neq x_i$, there is nothing to prove. Therefore, we may assume that $y_{i-1}=x_i$ and then $|H(x_i)|=|H(y_{i-1})|\geq 2$. The claim follows by the assumption that the statement (iii) of Theorem \ref{mainthm} does not hold. It should be noted that if all out-neighbors of $y_1$ in $Q-y_2$ are in $H(y_1)$, then we will pick one of such out-neighbor as $y_{1}^{\prime}$, so $y_1$ also has out-degree two in $Q^{\prime}$. By symmetry, if $|H(x_{i})|\geq 2$ for some $2\leq i\leq \beta$, then there is a vertex in $H(x_{i})$ with in-degree at least two in $Q$.
	
	\textbf{Case 2.} Exactly one of $|H(y_1)|\geq 2$ and $|H(x_{\beta})|\geq 2$ holds. 
	
	Reversing all arcs of $Q^{\prime}$ and switching the role of $u$ and $v$ if necessary, we may assume that $|H(y_1)|\geq 2$ and $|H(x_{\beta})|=1$. By the construction of $Q^{\prime}$, we have that  $y_1^{\prime}$ and $y_{\beta}^{\prime}$ belong to $V(Q^{\prime})$. Again, since (iii) does not hold, we may assume that $y_{\beta}$ has in-degree at least two in $Q$. By the claim before Case 2, we may assume that $y_1$ has out-degree at least two in $Q^{\prime}$. Let $P_1$ (resp., $P_2$) be an $(x_1,y_{\beta}^{\prime})$-path containing $y_1$ (resp., an $(x_1,y_{\beta})$-path containing $y_1^{\prime}$)  such that they are arc-disjoint and let $e$ be an out-arc of $y_1$ in $Q^{\prime}$ which is not on $P_1$.  It follows by $|H(x_{\beta})|=1$ that $x_{\beta}\in P_1\cap P_2$. 
	
	Then one can construct a good $(u,v)$-pair $(B_{u}^+,B_{v}^-)$ in $Q^{\prime}$ as follows (similar to the construction in Case 1). Construct $B_{u}^+$ from the paths $P_{u,x_1}$, $P_1$ and the out-branching $O^{\prime}$ rooted at $y_{\beta}^{\prime}$ in $Q\left\langle(V_1-y_{\beta})\cup \{y_{\beta}^{\prime}\}\right\rangle$ by adding an in-arc of $y_{\beta}$ which is distinct from $x_{\beta}y_{\beta}$ and all arcs from $y_{\beta}$ to uncovered vertices of $V(Q^{\prime})$. By symmetry, one can construct $B_{v}^-$ the paths $P_{y_{\beta},v}$, $P_2$ and the in-branching $I$ rooted at $x_1$ in $Q\left\langle V_{\beta+1}\right\rangle$ by adding the arc $e$ and all arcs from  uncovered vertices of $V(Q^{\prime})$ to $x_1$. See Figure \ref{fig:Case2}.
        
        Here, it should be noted that $y_1$ may not dominate $x_1$, so we use the out-arc $e$ of $y_1$ to collect $y_1$ into $B_{v}^-$.

\begin{figure}[H]
				\centering\begin{tikzpicture}[scale=0.4]
					\foreach \i in {(11.2,0),(5,0),(-5,0),(-11.2,0)}{\draw[ line width=0.8pt] \i ellipse [x radius=50pt, y radius=70pt];}
					\coordinate [label=center:$V_1$] () at (-11.2,4);
					\coordinate [label=center:$V_2$] () at (-5,4);
					\coordinate [label=center:$\cdots$] () at (0,0);
					\coordinate [label=center:{\color{red}$P_1$}] () at (0,-2.5);
					\coordinate [label=center:{\color{blue}$P_2$}] () at (0,-1);
					\coordinate [label=center:$V_\beta$] () at (5,4);
					\coordinate [label=center:$V_{\beta+1}$] () at (11.2,4);
					\draw[-stealth,line width=1.8pt] (-2,3) -- (2,3); 
					\filldraw[black](11,-1.5) circle (3pt)node[label=right:$x_1$](x1){};
					\filldraw[black](-11,-1.5) circle (3pt)node[label=left:$y_\beta$](yb){};
					\filldraw[black](-11,1.5) circle (3pt)node[label=left:$v$](v){};
					\filldraw[black](-5,-1.5) circle (3pt)node[label=above:$x_\beta$](xb){};
					\filldraw[black](11,1.5) circle (3pt)node[label=right:$u$](u){};
					\filldraw[black](5,-1.5) circle (3pt)node[label=above:$y_1$](y1){};
					\filldraw[black](-11,-3.5) circle (3pt)node[label=below:$y_\beta^{\prime}$](y2){};
					\filldraw[black](5,-3.5) circle (3pt)node[label=below:$y_1^{\prime}$](y3){};
					\filldraw[white](6,0) circle (3pt)node[](y4){};
					\filldraw[black](-10,0) circle (3pt)node(a){};
					\filldraw[black](10,0) circle (3pt)node(b){};
					
					\foreach \i/\j/\c/\t/\a in {
						x1/u/blue/0/0.8,
						b/u/blue/0/0.8,
						yb/v/blue/0/0.8,
						x1/y3/blue/30/0.8,
						xb/yb/blue/30/0.8,
						y1/y4/blue/0/0.8
					}{\path[draw, dashed,\c, line width=\a] (\i) edge[bend left=\t] (\j);}
				\foreach \i/\j/\c/\t/\a in {
					y2/a/red/0/1.5,
					a/v/red/0/1.5,
					v/yb/red/-30/1.5,
					u/x1/red/30/1.5,
					x1/y1/red/30/1.5,
					xb/y2/red/30/1.5,
					b/x1/black/0/0.8,
					yb/a/black/0/0.8,
					b/u/black/30/0.8
				}{\path[draw, \c, line width=\a] (\i) edge[bend left=\t] (\j);}		
					\draw [red,line width=1.5pt] (y1)
					to [out=210,in=-20] (1,-3) to [out=-200,in=-60] (-2,-3) to [out=120, in=-30] (xb);
					\draw [blue, dashed, line width=0.8pt] (y3) .. controls (-1,-2) and (0,0) .. (xb);	
                                      \end{tikzpicture}\caption{Case 2}\label{fig:Case2}
                                    \end{figure}
	\medskip

	\textbf{Case 3.} $|H(y_1)|\geq 2$ and $|H(x_{\beta})|\geq 2$. 
	
	In this case, it follows from the way we defined $Q^{\prime}$ that $x_1$ and $y_{\beta}$ have no copy in $Q^{\prime}$. By the claim before Case 2, we may assume that $y_1$ has out-degree at least two and $x_{\beta}$ has in-degree at least two in $Q^{\prime}$.  Let $P_1$ and $P_2$ be a pair of arc-disjoint $(x_1,y_{\beta})$-paths in $Q^{\prime}$ such that $y_1\in P_1$ and $x_{\beta}\in P_2$. Let $e_y$ be any out-arc of $y_1$ in $Q^{\prime}$ which is not on $P_1$ and let $e_x$ be any in-arc of $x_{\beta}$ in $Q^{\prime}$ which is not on the path $P_2$. 
	
	If $e_x\neq e_y$, then one can construct a good $(u,v)$-pair $(B_{u}^+,B_{v}^-)$ in $Q^{\prime}$ as follows (similar to the construction in Case 1). Construct $B_{u}^+$ from the paths $P_{u,x_1}$, $P_1$ and the out-branching $O$ rooted at $y_{\beta}$ in $Q\left\langle V_1 \right\rangle$ by adding the arc $e_x$ and all arcs from $y_{\beta}$ to uncovered vertices of $V(Q^{\prime})$. By symmetry, one can construct $B_{v}^-$ the paths $P_{y_{\beta},v}$, $P_2$ and the in-branching $I$ rooted at $x_1$ in $Q\left\langle V_{\beta+1}\right\rangle$ by adding the arc $e_y$ and all arcs from  uncovered vertices of $V(Q^{\prime})$ to $x_1$. See Figure \ref{fig:Case3}.

        \begin{figure}[H]
				\centering\begin{tikzpicture}[scale=0.4]
					\foreach \i in {(11.2,0),(5,0),(-5,0),(-11.2,0)}{\draw[ line width=0.8pt] \i ellipse [x radius=50pt, y radius=70pt];}
					\coordinate [label=center:$V_1$] () at (-11.2,4);
					\coordinate [label=center:$V_2$] () at (-5,4);
					\coordinate [label=center:$\cdots$] () at (0,0);
					\coordinate [label=center:{\color{red}$P_1$}] () at (0,-3);
					\coordinate [label=center:{\color{blue}$P_2$}] () at (0,-1);
					\coordinate [label=center:$V_\beta$] () at (5,4);
					\coordinate [label=center:$V_{\beta+1}$] () at (11.2,4);
					\draw[-stealth,line width=1.8pt] (-2,3) -- (2,3); 
					\filldraw[black](11,-1.5) circle (3pt)node[label=right:$x_1$](x1){};
					\filldraw[black](-11,-1.5) circle (3pt)node[label=left:$y_\beta$](yb){};
					\filldraw[black](-11,1.5) circle (3pt)node[label=left:$v$](v){};
					\filldraw[black](-5,-1.5) circle (3pt)node[label=above:$x_\beta$](xb){};
					\filldraw[black](-5,-3.5) circle (3pt)node[label=below:$x_\beta^{\prime}$](x2){};
					\filldraw[black](11,1.5) circle (3pt)node[label=right:$u$](u){};
					\filldraw[black](5,-1.5) circle (3pt)node[label=above:$y_1$](y1){};
					\filldraw[black](5,-3.5) circle (3pt)node[label=below:$y_1^{\prime}$](y2){};
					\filldraw[white](-6.5,0) circle (3pt)node[](y3){};
					\filldraw[white](6,0) circle (3pt)node[](y4){};
					\filldraw[black](-10,0) circle (3pt)node(a){};
					\filldraw[black](10,0) circle (3pt)node(b){};
					
					\foreach \i/\j/\c/\t/\a in {
						x1/u/blue/0/0.8,
						b/u/blue/0/0.8,
						yb/v/blue/0/0.8,
						x1/y2/blue/30/0.8,
						xb/yb/blue/30/0.8,
						y1/y4/blue/0/0.8
					}{\path[draw, dashed,\c, line width=\a] (\i) edge[bend left=\t] (\j);}	
					\foreach \i/\j/\c/\t/\a in {
					a/v/red/0/1.5,
					y3/xb/red/0/1.5,
					v/yb/red/-30/1.5,
					u/x1/red/30/1.5,
					x1/y1/red/30/1.5,
					x2/yb/red/30/1.5,
					b/x1/black/0/0.8,
					yb/a/black/0/0.8,
					b/u/black/30/0.8
				}{\path[draw, \c, line width=\a] (\i) edge[bend left=\t] (\j);}	
					\draw [red, line width=1.5pt] (y1)
					to [out=210,in=-20] (2,-4) to [out=-200,in=-60] (-2,-3) to [out=120, in=30] (x2);
					\draw [blue, dashed, line width=0.8pt] (y2) .. controls (-1,-2) and (0,0) .. (xb);	
				\end{tikzpicture}\caption{Case 3}\label{fig:Case3}
                            \end{figure}

                            So it suffices to consider the case that $y_1$ has only one out-arc $e_y$ not on $P_1$ and $x_{\beta}$ has only one in-arc $e_x$ not on $P_2$ and, these two arcs are the same one. Recall that $(S,u,v)$ is of type B and each backward arc $x_iy_i$ goes from $V_{\beta+2-i}$ to $V_{\beta+1-i}$ (two consecutive sets). So  $2\leq \beta\leq 3$. Further, we must have $x_i=y_{i-1}$ for $i=2$ when $\beta=2$ and for each $i\in\{2,3\}$ when $\beta=3$. Otherwise, for $i=2$ there are two arc-disjoint $(y_1,x_2)$-paths $P,P^{\prime}$ in $S\left\langle V_{\beta}\right\rangle$. Assume that $P$ is shorter than $P^{\prime}$ and $P$ is contained in the path $P_1$. This implies that $y_1$ has an out-neighbor which is an internal vertex of $P^{\prime}$, say $w$ is such out-neighbor. Then $y_1$ has an out-arc $y_1w\neq y_1x_{\beta}$ which is not on $P_1$, a contradiction. By symmetry, for $i=3$, we have $y_2=x_3$. So we may assume that $\{x_1y_1,y_1y_2\}$ and   $\{x_1y_1,y_1y_2,y_2y_3\}$ are the sets of backward arcs for the cases $\beta=2$ and $\beta=3$, respectively. Further, by Remark \ref{Remark-copyofD}, we may assume that $x_{\beta}=y_{\beta-1}^{\prime}$. Thus $P_1=uy_1v,P_2=uy_1^{\prime}v$ if $\beta=2$ and $P_1=uy_1y_2v,P_2=uy_1^{\prime}y_2^{\prime}v$ if $\beta=3$, see Figure \ref{fig-main23}.

		\begin{figure}[!h]
			\subfigure{\begin{minipage}[t]{0.45\linewidth}
					\centering\begin{tikzpicture}[scale=0.5]
					\filldraw[black](0,0) circle (3pt)node[label=left:$y_1$](y1){};
					\filldraw[black](-2,3.76) circle (3pt)node[label=left:$v$](v){};
					\filldraw[black](-2,-3.76) circle (3pt)node[label=left:$u$](u){};
					\filldraw[black](2,0) circle (3pt)node[label=below:$y_1^{\prime}$](y2){};
					\foreach \i/\j in {u/y1,y1/v}{\path[draw, red, line width=1.8pt] (\i) edge (\j);}
					\foreach \i/\j in {y1/y2,v/u}{\path[draw, line width=0.8pt] (\i) edge (\j);}
					\foreach \i/\j in {u/y2,y2/v}{\path[draw, blue, dashed, line width=0.8pt] (\i) edge (\j);}
					\end{tikzpicture}\caption*{(a) $\beta=2$}\end{minipage}}~~~~~~~~
			\subfigure{\begin{minipage}[t]{0.45\linewidth}
					\centering\begin{tikzpicture}[scale=0.5]
					\foreach \i in {(1,1.5),(1,-1.5)}{\draw[ line width=0.8pt] \i ellipse [x radius=60pt, y radius=30pt];}
					\draw[-stealth,line width=1.5pt] (-2,1.1) -- (-2,-1.1); 
					\filldraw[black](1.5,-1.5) circle (3pt)node[label=right:$y_1$](y1){};
					\filldraw[black](1.5,1.5) circle (3pt)node[label=right:$y_2$](y2){};
					\filldraw[black](1.5,4) circle (3pt)node[label=left:$v$](v){};
					\filldraw[black](1.5,-4) circle (3pt)node[label=left:$u$](u){};
					\filldraw[black](4,-1.5) circle (3pt)node[label=right:$y_1^{\prime}$](y3){};
					\filldraw[black](4,1.5) circle (3pt)node[label=right:$y_2^{\prime}$](y4){};
					\coordinate [label=center:$V_3$] () at (0,-1.5);
					\coordinate [label=center:$V_2$] () at (0,1.5);
					\foreach \i/\j in {u/y1,y1/y2,y2/v}{\path[draw, red, line width=1.8pt] (\i) edge (\j);}
					\foreach \i/\j in {u/y3,y3/y4,y4/v}{\path[draw, blue, dashed, line width=0.8pt] (\i) edge (\j);}
					\foreach \i/\j in {y3/y2,y1/y4}{\path[draw, line width=0.8pt] (\i) edge (\j);}
					\end{tikzpicture}\caption*{(b) $\beta=3$}\end{minipage}}
			\caption{The red fat arcs and blue dashed arcs indicate the paths $P_1$ and $P_2$, respectively.}\label{fig-main23}
		\end{figure}

For the case $\beta=2$, since $y_1$ has only one out-arc not on $P_1$ and $x_{\beta}$ has only one in-arc not on $P_2$, we have that $V_1=\{v\}, V_2=\{y_1\}, V_3=\{u\}$ in $S$. As $Q$ is not isomorphic to the digraph in Table \ref{exceptions} (c), either $H(y_1)$ has at least two arcs or one of $|H(u)|\geq 2$ and $|H(v)|\geq 2$ holds (in $Q$).  For the former case, let $a_ib_i,i\in[2]$ be such two arcs, then for paths $P_1^{\prime}=ua_1v$ and $P_2^{\prime}=ub_2v$, there is an out-arc $a_1b_1$ of $a_1$ not on $P_1^{\prime}$ and an in-arc $a_2b_2$ of $b_2$ not on $P_2^{\prime}$ and they are distinct. Then we may obtain a good $(u,v)$-pair by considering $P_i^{\prime}$ rather than $P_i$, $i\in[2]$. So we only need to consider that the latter case holds and assume that $|H(u)|\geq 2$. Then $uy_1vu^{\prime}y_1^{\prime}$ and $u^{\prime}y_1y_1^{\prime}v\cup\{uy_1^{\prime}\}$ form a good $(u,v)$-pair of $Q\left\langle \{u,u^{\prime},v,y_1,y_1^{\prime}\} \right\rangle$ and hence $Q$ has the wanted pair by Lemma \ref{(D-X)->D}.
                
                For the case $\beta=3$, since $y_1$ has only one out-arc not on $P_1$ and $x_{\beta}$ has only one in-arc not on $P_2$, again we have $V_1=\{y_3\}=\{v\},V_4=\{x_1\}=\{u\}$ in the partition of $S$. Moreover, both $H(y_1)$ and $H(y_2)$ are independent sets of size at least two. Suppose that there is a vertex $w\in V(Q)$ not in $H(y_1)\cup H(y_2)\cup H(u)\cup H(v)$.  Since  $y_1y_2^{\prime}$ is the only out-arc of $y_1$ not on $P_1$ and the only in-arc of $y_2^{\prime}$ not on $P_2$, we have that $H(y_2)$ dominates $w$ and $w$ dominates $H(y_1)$. Then $uy_1y_2wy_1^{\prime}y_2^{\prime}v$ and $y_1y_2^{\prime}wuy_1^{\prime}y_2v$ form a good $(u,v)$-pair in the subdigraph $Q\left\langle V(Q^{\prime})\cup\{w\}\right\rangle$ and thus $Q$ has the wanted pair by Lemma \ref{(D-X)->D}. So $Q=C_3[H(u)\cup H(v),H(y_1),H(y_2)]$, which  contradicts with our assumption (in the beginning of the proof) that $Q$ is not 2-arc-strong. This completes the proof.\end{proof}

                Gutin and Sun proved the following structural property of good $(u,u)$-pairs in semicomplete compositions.

\begin{lem}\label{QvsQprime} \cite{gutinDM343}
	A strong semicomplete composition $Q$ has a good pair rooted at $v$ if and only if $Q^{\prime}=Q\left\langle\{v\}\cup N^-(v)\cup N^+(v)\right\rangle$ has a good pair rooted at $v$.
\end{lem}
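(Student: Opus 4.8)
The plan is to treat the two implications separately, after first pinning down exactly which vertices disappear in passing from $Q$ to $Q'$. Write $Q=S[H_1,\dots,H_s]$ with $v\in H_r=:H(v)$, and let $v_S$ be the vertex of $S$ corresponding to $H_r$. Since $S$ is semicomplete, every vertex $w_S\neq v_S$ of $S$ is adjacent to $v_S$, so the whole block $H_{w_S}$ lies in $N^-(v)\cup N^+(v)\subseteq V(Q')$. Hence the removed set $X:=V(Q)\setminus V(Q')$ is contained in $H(v)$ and consists precisely of the vertices of $H(v)$ that are non-adjacent to $v$; moreover $Q'=S[H_1,\dots,H_{r-1},H_r',H_{r+1},\dots,H_s]$ where $H_r'=Q\langle\{v\}\cup((N^-(v)\cup N^+(v))\cap H(v))\rangle$, so $Q'$ is again a strong semicomplete composition over $S$ (strong by Lemma \ref{Dstr-Qkstr}). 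I will repeatedly use the twin property: every vertex of $H(v)$ has the same in- and out-neighbours outside $H(v)$ as $v$.

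For the direction ``$Q'$ good $\Rightarrow Q$ good'' I would simply extend the pair. As $S$ is strong with $s\geq 2$, the vertex $v_S$ has an in-neighbour $z_S$ and an out-neighbour $w_S$ in $S$; by the twin property each $x\in X$ is dominated by every vertex of $H_{z_S}\subseteq N^-(v)$ and dominates every vertex of $H_{w_S}\subseteq N^+(v)$. Thus every $x\in X$ has both an in- and an out-neighbour in $Q-X=Q'$, and Lemma \ref{(D-X)->D} converts any good $(v,v)$-pair of $Q'$ into one of $Q$.

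For the converse I would apply Theorem \ref{mainthm} with $u=v$ to both $Q$ and $Q'$ (the degree hypothesis is vacuous since $u=v$, and both are strong semicomplete compositions over the same $S$), and show that conditions (i)--(iii) there hold for $Q$ if and only if they hold for $Q'$; this in fact yields both implications at once. Condition (iii) never applies when $u=v$, since type $B$ in Definition \ref{typeAB} requires $b\in V_1,\ a\in V_{\beta+1}$ with $\beta\geq 1$, incompatible with $a=b=v_S$. If $(S,v_S,v_S)$ is of type $A$, then $a=b=v_S$ forces $\alpha=1$, so there is a single backward arc $x_1y_1$ running from $V_3$ to $V_1$ while $v_S\in V_2$; consequently $H(x_1),H(y_1)\neq H(v)$. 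Because all arcs of $S$ between $V_2$ and $V_3$ (resp. between $V_1$ and $V_2$) point from $V_2$ to $V_3$ (resp. from $V_1$ to $V_2$), no vertex of $H(x_1)$ has an out-neighbour in $H(v)$ and no vertex of $H(y_1)$ has an in-neighbour in $H(v)$; hence deleting $X\subseteq H(v)$ alters neither $|H(x_1)|,|H(y_1)|$ nor the out-degrees on $H(x_1)$ and the in-degrees on $H(y_1)$, so (ii) holds for $Q$ exactly when it holds for $Q'$.

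The remaining and main obstacle is condition (i), the exception list, and I would settle it by a finite inspection of Table \ref{exceptions} for $u=v$: in entries (b)--(g) the roots $u$ and $v$ necessarily lie in distinct blocks (or are distinct vertices of a fixed small digraph), which is impossible when $u=v$; and entry (a) with $u=v$ forces $V(H)=\{v\}$, i.e. $H(v)=\{v\}$. In that last case $X=\emptyset$ and $Q=Q'$, so (i) is preserved trivially, while otherwise (i) fails for both $Q$ and $Q'$. Combining the three conditions, $Q$ satisfies one of (i)--(iii) iff $Q'$ does, so $Q$ has a good $(v,v)$-pair iff $Q'$ does. The delicate points to verify carefully are the type-$A$/$\alpha=1$ reduction (which is what guarantees the backward arc avoids the block $H(v)$, via Theorem \ref{sABC}) and the claim that no Table \ref{exceptions} digraph with $u=v$ has $|H(v)|\geq 2$; everything else is routine bookkeeping.
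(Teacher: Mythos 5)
The paper never proves this lemma: it is quoted from Gutin and Sun \cite{gutinDM343}, so there is no in-paper proof to compare against, and your derivation from Theorem \ref{mainthm} is necessarily a different route from the original (which is a direct construction predating that theorem). It is, however, a legitimate route: Theorem \ref{mainthm} is proved in this paper without any appeal to Lemma \ref{QvsQprime} (the lemma is only used afterwards), so there is no circularity, and your argument is structurally identical to the paper's own proof of the very next lemma, the distinct-roots analogue. Both use Lemma \ref{(D-X)->D} for the "$Q'$ good $\Rightarrow Q$ good" direction (every deleted vertex is a non-neighbour of $v$ inside $H(v)$ and, since $S$ is strong and semicomplete, keeps an in- and an out-neighbour in $Q'$), and both get the converse by showing that the obstructions of Theorem \ref{mainthm} transfer between $Q$ and $Q'$. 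Your reductions are correct and match the paper's: (iii) is vacuous when the roots coincide since type B requires $b\in V_1$, $a\in V_{\beta+1}$; type A with $a=b=v_S$ forces $\alpha=1$ (this is Theorem \ref{sABC}), so the unique backward arc runs from $V_3$ to $V_1$ while $v_S\in V_2$, whence the block sizes and the relevant degrees in condition (ii) are untouched by deleting $X\subseteq H(v)$. A pleasant by-product of your analysis is an explanation of why this lemma, unlike its distinct-roots analogue, needs no exceptional clause: Table \ref{exceptions} contains no entry with coincident roots.

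One point needs tightening, and you half-identified it yourself. For condition (i) you entertain a degenerate reading of Table \ref{exceptions}(a) with $u=v$ and dispose of it by arguing that it forces $H(v)=\{v\}$, hence $X=\emptyset$ and $Q=Q'$. That handles the case where $Q$ is such a digraph, but not the asymmetric case where $Q'$ is: for instance $Q=C_3[\overline{K}_2,\overline{K}_2,\overline{K}_2]$ with $v$ in the last block gives $Q'=C_3[\overline{K}_2,\overline{K}_2,\overline{K}_1]$ with $X\neq\emptyset$, and there your claim "otherwise (i) fails for both $Q$ and $Q'$" is exactly what would need proof under the degenerate reading. The clean fix is to observe that $C_3[\overline{K}_2,\overline{K}_2,\overline{K}_1]$ does have a good $(v,v)$-pair, e.g. the out-branching $\{vx_1,vx_2,x_1y_1,x_2y_2\}$ and the in-branching $\{x_1y_2,x_2y_1,y_1v,y_2v\}$ are arc-disjoint, so this digraph cannot appear on a correct exception list; in other words, entry (a) must be read with $u\neq v$ (as the proof of Lemma \ref{specialexceptions} implicitly does), and then condition (i) simply never holds for either $Q$ or $Q'$ when the roots coincide. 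With that observation inserted, your proof is complete and correct.
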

 Next we derive a result similar to Lemma \ref{QvsQprime} for distinct roots $u$ and $v$.
\begin{lem} 
	Let $S$ be a strong semicomplete digraph of order $s\geq 2$ and let $H_1,\ldots,H_s$ be arbitrary digraphs. Suppose that $Q=S[H_1,\ldots,H_s]$ and $u, v$ are two distinct vertices in the same $H_i$. Then  $Q^{\prime}=Q\left\langle\{u,v\}\cup N(u)\cup N(v)\right\rangle$  has a good $(u,v)$-pair if and only if $Q$ has a good $(u,v)$-pair and $Q^{\prime}$ is not isomorphic to the digraph  $C_3[\overline{K}_2,\overline{K}_2,H]$, where $V(H)=\{u,v\}$ and $A(H)\subseteq\{vu\}$, that is, one of the two digraphs in Table \ref{exceptions} (a).
\end{lem}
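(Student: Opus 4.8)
The plan is to read the existence of a good $(u,v)$-pair in both $Q$ and $Q'$ off Theorem \ref{mainthm}, exploiting that $Q$ and $Q'$ are compositions of the \emph{same} strong semicomplete digraph $S$ which differ only inside the factor $H_i$ containing $u$ and $v$. For the setup: since $u,v$ lie in the same $H_i$ with $u\neq v$, Remark \ref{Remark-uSvS} gives a common image $u_S=v_S$ in $S$, and as $S$ is semicomplete every $v_j$ with $j\neq i$ is adjacent to $v_i$, so all of $H_j$ ($j\neq i$) is adjacent to both $u$ and $v$ and therefore lies in $Q'$. Hence $Q'=S[H_1,\dots,H_i',\dots,H_s]$ with $H_i'=H_i-X$, where $X=V(Q)\setminus V(Q')$ is contained in $H_i$ and consists exactly of the vertices of $H_i$ nonadjacent to both $u$ and $v$. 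In particular the triple $(S,u_S,v_S)$ is the same for $Q$ and for $Q'$, so its type (A or B) coincides for the two.

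For the forward implication, assume $Q'$ has a good $(u,v)$-pair. Then $Q'$ is not one of the digraphs of Table \ref{exceptions}(a), since those have no good $(u,v)$-pair by Lemma \ref{specialexceptions}. To pass from $Q'$ to $Q$ I would apply Lemma \ref{(D-X)->D} to the set $X$: because $S$ is strong with $s\geq 2$, $v_i$ has an out-neighbor $v_j$ and an in-neighbor $v_k$ in $S$ with $j,k\neq i$, so every $w\in X\subseteq H_i$ dominates all of $H_j$ and is dominated by all of $H_k$, with $H_j,H_k\subseteq V(Q')=V(Q-X)$. Thus each vertex of $X$ has an in- and an out-neighbor in $Q-X$, and Lemma \ref{(D-X)->D} lifts the good pair of $Q'$ to one of $Q$.

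For the converse assume $Q$ has a good $(u,v)$-pair and $Q'$ is not in Table \ref{exceptions}(a), and suppose toward a contradiction that $Q'$ has no good $(u,v)$-pair. By Remark \ref{Remark-uvatleast2}, $d_Q^+(u),d_Q^-(v)\geq 2$; since $X$ is nonadjacent to $u$ and $v$, deleting it destroys no out-neighbor of $u$ and no in-neighbor of $v$, so also $d_{Q'}^+(u),d_{Q'}^-(v)\geq 2$, and Theorem \ref{mainthm} applies to $Q'$, forcing one of its conditions (i)--(iii). Condition (iii) is impossible: with $u_S=v_S$ the triple cannot be of type B, since type B would place the common root in two distinct parts $V_1$ and $V_{\beta+1}$. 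For condition (i) the decisive point is that $u$ and $v$, being in the same $H_i$, are \emph{twins} with respect to every vertex outside $H_i$; checking the rooted exceptions of Table \ref{exceptions} shows that only family (a) has mutually twin roots, and since family (a) is preserved by the operation $Q'\mapsto\overleftarrow{Q'}$, condition (i) for $Q'$ or for $\overleftarrow{Q'}$ would force $Q'$ into family (a), which is excluded. Finally, if condition (ii) holds then $(S,u_S,v_S)$ is of type A, which with $u_S=v_S$ forces $\alpha=1$; the unique backward arc $xy$ then runs from $V_3$ to $V_1$ with the common root in $V_2$, so $x,y\neq v_i$, and using that there is no arc $V_3\to V_2$ or $V_2\to V_1$ one sees that $H(x),H(y)$, the out-degrees on $H(x)$ and the in-degrees on $H(y)$ are all unchanged by the deletion of $X\subseteq H_i$. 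Hence condition (ii) holds verbatim for $Q$, contradicting that $Q$ has a good $(u,v)$-pair; this contradiction yields a good $(u,v)$-pair in $Q'$.

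I expect the main obstacle to be condition (ii): one must verify precisely that deleting $X\subseteq H_i$ leaves untouched exactly the degree data appearing in the type-A criterion, which rests on the structural facts that in a type-A partition with $\alpha=1$ the backward arc avoids the middle part $V_2$ and that no arc runs $V_3\to V_2$ or $V_2\to V_1$. The secondary subtlety is the twin analysis for condition (i), where care is needed to confirm that family (a) is the only Table \ref{exceptions} entry whose two prescribed roots are mutual twins and to handle the reversed digraph $\overleftarrow{Q'}$.
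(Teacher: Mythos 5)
Your proof is correct and follows essentially the same route as the paper's: the forward direction via Lemma \ref{specialexceptions} and Lemma \ref{(D-X)->D} (using strongness of $S$ to give every deleted vertex an in- and out-neighbor in $Q'$), and the converse by applying Theorem \ref{mainthm} to $Q'$, ruling out type B because $u_S=v_S$, ruling out the Table \ref{exceptions} digraphs other than (a), and transferring the type-A ($\alpha=1$) condition (ii) verbatim back to $Q$. If anything, your write-up is more explicit than the paper's, which asserts without detail both that only type A with $\alpha=1$ can occur (your twin argument for excluding exceptions (b)--(g)) and that condition (ii) carries over from $Q'$ to $Q$ (your observation that the backward arc avoids $V_2$ and that no arcs go $V_3\to V_2$ or $V_2\to V_1$).
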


\begin{proof}
  Note that $V(Q)\backslash V(Q^{\prime})\subseteq V(H_i)$, which means that $Q^{\prime}$ can be obtained from $Q$ by deleting those vertices in $H_i-\{u,v\}$ which are not adjacent to $u$ and $v$. Then $d^+_{Q^{\prime}}(u)=d^+_{Q}(u)$ and $d^-_{Q^{\prime}}(v)=d^-_{Q}(v)$. First suppose that $Q^{\prime}$ has a good $(u,v)$-pair. By Lemma \ref{specialexceptions}, $Q^{\prime}$ is not isomorphic to the exception.
  Since $Q$ is a composition of the strong digraph $S$, each vertex in $H_i$ has an in- and out-neighbor in $V(Q-H_i)$ and hence it follows from Lemma \ref{(D-X)->D} that there is a good $(u,v)$-pair in $Q$.
	
	Next we show the sufficiency. Suppose that $Q^{\prime}$ has no good $(u,v)$-pair. Since $u$ and $v$ are two vertices in the same $H_i$, the vertices $u$ and $v$ have the same out- and in-neighbors in $H_j$ for each $j\neq i$ and $u_S=v_S$.  By the assumption that $Q^{\prime}$ has no good $(u,v)$-pair and it is not isomorphic to the exception, $(S,u_S,v_S)$ is of type A with $\alpha=1$ and the second statement of Theorem \ref{mainthm} holds for $Q^{\prime}$. Recall that $Q^{\prime}$ can be obtained from $Q$ by deleting some vertices in $H_i$. Then the statement (ii) also holds for $Q$ and then there is no good $(u,v)$-pair in $Q$, which contradicts our assumption. Hence there is a good $(u,v)$-pair in $Q^{\prime}$ and then the lemma holds.
\end{proof}

\section{Good pairs in quasi-transitive  digraphs and compositions of transitive digraphs }\label{sec:transitive}

\begin{lem} \label{transitive}\cite{bang2009}
	Let $D$ be a digraph with an acyclic ordering $D_1,\ldots,D_p$ of its strong components. The digraph $D$ is transitive if and only if each $D_i$ is complete, the digraph $H$ obtained from $D$ by contracting of $D_1,\ldots,D_p$ followed by deletion of multiple arcs is a transitive oriented graph, and $D=H[D_1,\ldots,D_p]$, where $p=|V(H)|$.
\end{lem}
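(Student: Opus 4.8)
The plan is to establish the two implications separately, and the engine driving both is one elementary remark about transitive digraphs: if $D$ is transitive and there is a directed $(x,y)$-path in $D$, then $xy\in A(D)$. Indeed, writing the path as $x=z_0z_1\cdots z_k=y$ and applying the defining implication ($z_0z_1,z_1z_2\in A(D)\Rightarrow z_0z_2\in A(D)$) repeatedly collapses the path to the single arc $xy$. With this remark in hand, the forward direction splits into three claims. First, each strong component $D_i$ is a complete digraph: for any two vertices $x,y\in D_i$ strong connectivity gives directed paths both ways, so the remark yields both $xy$ and $yx$. Second, $D=H[D_1,\ldots,D_p]$: it suffices to show that if some vertex of $D_i$ dominates some vertex of $D_j$ (with $i\neq j$), then every vertex of $D_i$ dominates every vertex of $D_j$; this follows by prepending an $(x',x)$-path inside $D_i$ and appending a $(y,y')$-path inside $D_j$ to the given arc $xy$ and collapsing via the remark. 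Third, the condensation $H$ is a transitive oriented graph: it has no digon because distinct strong components admit arcs in only one direction (a general property of the condensation), and transitivity of $H$ follows by lifting a path $D_i\to D_j\to D_k$ to a directed walk in $D$ routed through the strong component $D_j$ and collapsing.

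For the converse I would assume the three structural conditions and verify the defining implication directly. Take arcs $uv,vw\in A(D)$ with $u\neq w$ and let $a,b,c$ be the indices (vertices of $H$) of the components containing $u,v,w$. Because $D=H[D_1,\ldots,D_p]$ and each $D_i$ is complete, $uv\in A(D)$ means either $a=b$ or $ab\in A(H)$, and similarly $vw\in A(D)$ means $b=c$ or $bc\in A(H)$. A short case check then produces the desired arc $uw$: if $a=c$, then $u,w$ lie in the same complete component, so $uw\in A(D)$; otherwise the coincidences among $a,b,c$ reduce every case to $ac\in A(H)$ (using transitivity of $H$ when $a,b,c$ are pairwise distinct), and then $uw\in A(D)$ by the composition structure.

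The only genuinely delicate point is the bookkeeping of degenerate cases in the converse, where two of the three indices $a,b,c$ coincide. In particular, in the case $ab,bc\in A(H)$ one must rule out $a=c$ before invoking transitivity of $H$; this is exactly where the oriented-graph property is needed, since $a=c$ would make $ab$ and $bc=ba$ a digon in $H$, contradicting that $H$ is digon-free. All other steps reduce to routine applications of the path-collapsing remark together with strong connectivity of the components, so I anticipate no further obstacle.
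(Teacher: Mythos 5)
Your proposal is correct. Note that the paper itself gives no proof of Lemma \ref{transitive}: it is quoted from the textbook \cite{bang2009}, so there is no in-paper argument to compare against; your write-up supplies exactly the kind of elementary structural proof that the citation stands in for. The path-collapsing remark, the three claims in the forward direction (completeness of each $D_i$, the composition structure $D=H[D_1,\ldots,D_p]$, and transitivity plus digon-freeness of the condensation $H$), and the case analysis in the converse are all sound; in particular you correctly identify the one delicate point, namely that in the case $ab,bc\in A(H)$ the coincidence $a=c$ is excluded precisely because $H$ is an oriented graph. One definitional remark worth making explicit: throughout, transitivity must be read as ``$uv,vw\in A(D)$ with $u\neq w$ implies $uw\in A(D)$'' (no loops are forced), since otherwise a transitive digraph could not contain a digon and the strong components could never be complete digraphs; your converse uses this convention and your forward direction respects it by only collapsing paths with distinct endpoints, so the proof is consistent, but the convention deserves a sentence.
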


\begin{thm}\label{Dtransitive}
  Let $T$ be a transitive digraph on $t\geq 2$ vertices and let $H_1,\ldots,H_t$ be arbitrary digraphs. Suppose that $Q=T[H_1,\ldots,H_t]$ and $u$ (resp., $v$) is a vertex in the initial (resp., terminal) component of $Q$. Then there is a good $(u,v)$-pair in $Q$ if and only if none of the following holds.
    \begin{itemize}
    \item $Q=TT_3[u,\overline{K}_{n-2},v]$ (the digraph in Table \ref{exceptions} (b)).
    \item $Q=C_2[H,K_1]$ such for some digraph $H$ so that $u$ and $v$ are two distinct vertices of $H$ and $d^+_Q(u)=1$ or $d^-_Q(v)=1$.
    \item $Q=P_2[T^+_u,v]$ or $Q=P_2[u,T^-_v]$, where $T^+_u$ (resp., $T^-_v$) is an out-tree rooted at $u$ (resp., in-tree rooted at $v$). 
      \end{itemize}
\end{thm}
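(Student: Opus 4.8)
The plan is to separate necessity from sufficiency, and to reduce the sufficiency direction to a two-layer composition. For necessity I would check each family directly. The digraph $TT_3[u,\overline{K}_{n-2},v]$ is precisely the first digraph of Table \ref{exceptions}(b), so it has no good pair by Lemma \ref{specialexceptions}. For $C_2[H,K_1]$ with $d_Q^+(u)=1$ or $d_Q^-(v)=1$ the vertices $u,v$ are distinct, so Remark \ref{Remark-uvatleast2} rules out a good pair at once. For $P_2[T^+_u,v]$ I would use that the first part is the out-tree $T^+_u$ and $v$ is a sink: any out-branching of $Q$ restricted to the first part is an out-branching of $T^+_u$ from $u$, which is forced to be all of $T^+_u$, plus one arc into $v$; then in any in-branching each $w\in T^+_u$ can reach $v$ only through arcs of $T^+_u$ (already used) or through the arc $wv$, so $wv$ must be used for every $w$, clashing with the arc that the out-branching uses to enter $v$. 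The case $P_2[u,T^-_v]$ follows by reversing all arcs.

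For sufficiency, assume none of the three families occurs. By Lemma \ref{transitive} I write $T=W[D_1,\ldots,D_p]$ with $W$ a transitive (hence acyclic) oriented graph and each $D_i$ complete, so $Q=W[G_1,\ldots,G_p]$ where each super-part $G_i$ is a semicomplete composition, strong when $|D_i|\ge 2$ and an arbitrary $H_j$ when $D_i$ is a single vertex. If $Q$ is strong then $W$ is a single vertex, $T$ is a complete digraph, and $Q$ is a \emph{strong} semicomplete composition, so Theorem \ref{mainthm} applies; here I would observe that, the quotient being complete, no type-A or type-B structure can occur unless $p=2$, which leaves only $Q=C_2[H_1,H_2]$, and a short direct analysis (invoking Lemma \ref{2arcstrong} when both parts have size at least two) isolates the family $C_2[H,K_1]$ as the sole obstruction.

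So suppose $Q$ is non-strong, $p\ge 2$, and reorder so that $G_1$ is the source super-part containing $u$ and $G_p$ the sink super-part containing $v$; for an out-branching from $u$ and an in-branching to $v$ to exist these must be unique and satisfy $G_1\to G_i\to G_p$ for all $i$. Every vertex of a middle super-part then has an in-neighbour in $G_1$ and an out-neighbour in $G_p$, so Lemma \ref{(D-X)->D} lets me delete all middle super-parts and it suffices to build a good pair in $P_2[G_1,G_p]$. The technical heart would be a two-layer lemma: $P_2[A,B]$ with $u$ in the initial component of $A$ and $v$ in the terminal component of $B$ has a good $(u,v)$-pair \emph{except} when $B$ is a single vertex and $A$ is an out-tree rooted at $u$, or $A$ is a single vertex and $B$ is an in-tree rooted at $v$. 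When $|A|,|B|\ge 2$ I would cover $B$ in the out-branching from $u$ (plus one arc $a_2v$ with $a_2\ne u$) and send every vertex of $A$ directly into $B$ in the in-branching, using the freedom in the complete bipartite arc set $A\to B$ to keep the two arc sets disjoint; when one side is a single vertex the obstruction is exactly that a forced out-tree (resp. in-tree) leaves no internal arc free to re-route the root (resp. sink).

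The hard part will be the lifting step in the degenerate reduced cases, where $G_1=T^+_u$ and $G_p=\{v\}$ (or the mirror image) yet a middle super-part survives, so that $Q$ is not literally $P_2[T^+_u,v]$. The key point I would push is that the persistent difficulty — collecting the root $u$ into the in-branching — disappears as soon as $u$ has one spare out-arc: if $T^+_u$ is non-trivial I would cover the middle from a child of $u$ instead of from $u$, freeing an arc $um$ with which to collect $u$; if $G_1=\{u\}$ is a single vertex I would instead exploit internal arcs of a middle super-part. The unique configuration with no such slack is $G_1=\{u\}$, $G_p=\{v\}$, and a single middle super-part that is an independent set, namely $Q=TT_3[u,\overline{K}_{n-2},v]$. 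Showing that this case split is exhaustive and that every non-exceptional boundary configuration admits an explicit arc-disjoint construction (with the leftover strong instances $C_2[H_1,H_2]$ settled by the same constructions as in Lemma \ref{2arcstrong}) is where the real bookkeeping lies.
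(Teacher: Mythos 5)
Your plan is sound and would yield the theorem, but it is organized quite differently from the paper's proof, so it is worth comparing the two. For necessity you agree with the paper on the first two families; for $P_2[T^+_u,v]$ the paper just counts arcs (the digraph has $2|V(Q)|-3$ arcs, fewer than the $2(|V(Q)|-1)$ any good pair needs), which is slicker than your forcing argument, though yours is also valid. The real divergence is in sufficiency. In the strong case ($T$ complete) the paper never invokes Theorem \ref{mainthm}: it lifts a good pair from $T$ via Lemma \ref{(D-X)->D} when $u,v$ can be taken as vertices of $T$, uses an explicit Hamiltonian-cycle construction when $u\neq v$ lie in the same part and $t\geq 3$, and settles $t=2$ by hand via Lemmas \ref{Dstr-Qkstr} and \ref{2arcstrong}. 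Your shortcut through Theorem \ref{mainthm} is legitimate (it is proved before Theorem \ref{Dtransitive}), and your key observation is correct: a complete quotient admits no type-A partition at all, and a type-B partition only when $t=2$; but to apply Theorem \ref{mainthm} you must also rule out its condition (i), namely that $Q$ or $\overleftarrow{Q}$ is isomorphic to a digraph of Table \ref{exceptions} (routine, since a composition of a complete digraph on at least two vertices has too many $2$-cycles, but you never say this), and you must deal separately with the degree hypotheses $d^+_Q(u)\geq 2$, $d^-_Q(v)\geq 2$, which is exactly where the $C_2[H,K_1]$ obstruction lives. In the non-strong case the paper does not delete the middle at all: transitivity gives that $u$ dominates all of $Q-H(u)$ and everything dominates $v$, and the branchings are built directly in $Q$; a non-empty middle then automatically supplies the spare out-arc of $u^{\prime}$, so the exceptional shapes $P_2[T^+_u,v]$ and $TT_3[u,\overline{K}_{n-2},v]$ can only materialize when there is nothing between $H(u)$ and $H(v)$. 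Your inverted order — delete the middle via Lemma \ref{(D-X)->D}, prove a two-layer lemma for $P_2[A,B]$, then re-insert the middle in the degenerate cases — is correct (your two exceptional shapes for $P_2[A,B]$ are exactly right, and the slack arguments ``cover the middle from a child of $u$'' and ``use an arc inside the middle'' do close every case, provided the latter is read to include arcs \emph{between} two middle super-parts, not only arcs internal to one), but it manufactures precisely the lifting bookkeeping you flag as the hard part, which the paper's organization sidesteps entirely. What your route buys is a clean, reusable two-layer lemma and a transparent explanation of why $TT_3[u,\overline{K}_{n-2},v]$ is the unique surviving obstruction; what the paper's route buys is brevity and the avoidance of any re-lifting step.
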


\begin{proof}
	Observe that if $Q$ is isomorphic to $P_2[T_u,v]$ or $P_2[u,T_v]$, then it clearly has no good $(u,v)$-pair as it has size $2|V(Q)|-3$ which is less than $2(|V(Q)|-1)$. Hence the necessity follows by Lemma \ref{specialexceptions} and Remark \ref{Remark-uvatleast2}.  Next we show the sufficiency. First consider the case that $T$ is strong and hence $T$ is complete by Lemma \ref{transitive}. If both $u$ and $v$ are vertices of $T$, there is a good $(u,v)$-pair in $T$ and hence $Q$ has a good $(u,v)$-pair due to Lemma \ref{(D-X)->D}. So it suffices to consider the case that $u$ and $v$ are two distinct vertices in the same $H_r$, that is, both $u$ and $v$ correspond to the same vertex of $T$. 
	
	If $t=2$, then $Q=C_2[H,H^{\prime}]$. By Lemmas \ref{Dstr-Qkstr} and \ref{2arcstrong}, we may assume that $H^{\prime}$ consists of a single vertex $w$ otherwise $Q$ is 2-arc-strong. As $u$ and $v$ are two distinct vertices in the same $H_r$, the vertices $u,v$ belong to $H$. Since $Q$ is not isomorphic to the exception, we get that $d^+_Q(u)\geq 2$ and $d^-_Q(v)\geq 2$. Let $u_O$ be an out-neighbor of $u$ and $v_I$ be an in-neighbor of $v$ in $H$. If $u$ dominates $v$, then $uvw$ and $uwv$ form a good $(u,v)$-pair in $Q\left\langle\{u,v,w\}\right\rangle$ and hence there is the wanted pair in $Q$ by Lemma \ref{(D-X)->D}. So we can assume that $uv\notin A(Q)$. Then we can get a good $(u,v)$-pair of $Q$ as follows. Let $B_{u,Q}^+=\{uw,v_Iv\}\cup\{wz:z\in V(H)-\{u,v\}\}$ and construct $B_{v,Q}^-$ from arcs $uu_O$ and $wv$ by adding arcs $\{zw:z\in V(H)-\{u,v\}\}$.  For the case that $t\geq3$, let $C=u_1u_2\cdots u_tu_1$ be a hamiltonian cycle in $T$ such that $u$ and $v$ correspond to the vertex $u_1$. By Remark \ref{Remark-copyofD}, we may  assume that $u$ is the vertex $u_1$ in $T$. Then $(C-u_tu_1)\cup \{u_tv\}$ and $u_1u_tu_{t-1}\cdots u_2v$ form a good $(u,v)$-pair in $Q\left\langle V(T)\cup\{v\}\right\rangle$. Lemma \ref{(D-X)->D} shows that $Q$ also has a good $(u,v)$-pair.

	It remains to consider the case that $T$ is a non-strong transitive digraph. In this case $u\neq v$ so we may assume that $u,v$ are vertices of $T$. Clearly, each  vertex in the initial component of $Q$ belong to a set $H(w)$ where  $w$ is a vertex of  the initial component of $T$. This means that there is a path from $u$ to any other vertex of $T$. As $T$ is transitive, $u$ dominates all vertices in $T-u$,  consequently, $u$ dominates all vertices in $Q-H(u)$. Moreover, since $u$ belongs to the initial component of $Q$, there is an out-branching $B_{u,H(u)}^+$ rooted at $u$ in $H(u)$. In the same way, each vertex in $Q-H(v)$ dominates $v$ and there is an in-branching $B_{v,H(v)}^+$ rooted at $v$ in $H(v)$.
	
	If $|H(u)|\geq 2$, then let $u^{\prime}$ be an arbitrary vertex in $H(u)-u$. Let  $B_{u,Q}^+=B_{u,H(u)}^+\cup \{ur:r\in Q-H(u)-v\}\cup\{u^{\prime}v\}$ and construct $I=B_{v,H(v)}^-\cup\{rv:r\in Q-H(v)-u^{\prime}\}$. Note that if $u^{\prime}$ has another out-arc $e$ in $Q-B_{u,H(u)}^+$ which is distinct from $u^{\prime}v$, then $(B_{u,Q}^+,I\cup \{e\})$ is a good $(u,v)$-pair in $Q$. So we may assume that if $|H(u)|\geq 2$, then for any $u^{\prime}\in H(u)-u$, it has only one out-arc $u^{\prime}v$ not in $B_{u,H(u)}^+$. This means that $v=Q-H(u)$ and thus $Q=P_2[H(u),v]$. Since $Q\neq P_2[T_u,v]$, there is an arc $ab\in H(u)$ not in $B_{u,H(u)}^+$, then $B_{u,H(u)}^+\cup\{av\}$ and $\{ab\}\cup\{rv:r\in H(u)-a\}$ form a good $(u,v)$-pair in $Q$. 
		
		Therefore, we may assume that $|H(u)|=1$ and assume $|H(v)|=1$ by symmetry. As $Q$ is not just the digraph on two vertices and the arc $uv$, i.e., $P_2[u,v]$, $Q$ contains a spanning subdigraph $TT_3[u,H,v]$, where is an arbitrary non-empty digraph. If there is an arc $xy$ with $x\in H\cup v$ and $y\in H$, then $\{ur:r\in Q-y\}\cup\{xy\}$ and $\{rv:r\in Q-u\}\cup\{uy\}$ form a good $(u,v)$-pair in $Q$. So we may assume that $H\cong \overline{K}_{n-2}$ and there is no arc from $v$ to $H$. By symmetry, there is no arc from $H$ to $u$ and then $Q$ is isomorphic to $TT_3[u,\overline{K}_{n-2},v]$, which contradicts our assumption. This completes the proof.
\end{proof}

Recall that in a quasi-transitive digraph, the presence of the arcs $xy,yz$ implies that there is an arc between $x$ and $z$. So if $Q=P_2[T_u,v]$ is quasi-transitive, where $T_u$ is an out-tree rooted at $u$, then either $T_u=u$ or $T_u=\{ur:r\in T_u-u\}$. Hence the following holds.
\begin{remark}\label{Remark-trancom}
	If $Q$ is a quasi-transitive digraph and it is isomorphic to $P_2[T_u,v]$ or $P_2[u,T_v]$ described in Theorem \ref{Dtransitive}, then $Q$ is isomorphic to $P_2[u,v]$ or $TT_3[u,\overline{K}_{n-2},v]$.
\end{remark}

Combining  Lemma \ref{QTchara}, Remark \ref{Remark-trancom}, Theorems \ref{Dtransitive} and \ref{mainthm} we obtain the following classification of good $(u,v)$-pairs in quasi-transitive digraphs.

\begin{thm}\label{mainthm-quasi}
	Let $Q$ be a quasi-transitive digraph. Suppose that $u$ (resp., $v$) is a vertex in the initial (resp., terminal) component of $Q$ such that $d_Q^+(u)\geq 2$ and $d_Q^-(v)\geq 2$ when $u\neq v$. Then $Q$ has a good $(u,v)$-pair if and only if $Q$ satisfies none of the conditions of Theorem \ref{mainthm}. 
\end{thm}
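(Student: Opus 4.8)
The plan is to run the recursive decomposition of Theorem \ref{QTchara} and treat the strong and non-strong cases separately, reducing each to one of the two composition results already proved. First I would dispose of the trivial case where $Q$ is a single vertex: then $u=v$, the vertex is simultaneously a trivial out- and in-branching sharing no arc, and no condition of Theorem \ref{mainthm} holds, so both sides of the equivalence are satisfied. Hence assume $|V(Q)|\geq 2$.

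If $Q$ is strong, Theorem \ref{QTchara}(i) gives $Q=S[Q_1,\ldots,Q_s]$ with $S$ a strong semicomplete digraph on $s\geq 2$ vertices (we cannot have $s=1$, since a single block would force $Q$ to be a vertex or non-strong) and each $Q_i$ quasi-transitive. Thus $Q$ is literally a strong semicomplete composition, and since $u,v$ lie in the unique strong component the hypotheses $d_Q^+(u)\geq 2$ and $d_Q^-(v)\geq 2$ (for $u\neq v$) are exactly those of Theorem \ref{mainthm}. Applying Theorem \ref{mainthm} with $H_i=Q_i$ immediately yields that $Q$ has a good $(u,v)$-pair if and only if none of (i)--(iii) holds, which is the assertion.

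The substantive work is the non-strong case. Here Theorem \ref{QTchara}(ii) gives $Q=T[Q_1,\ldots,Q_t]$ with $T$ a transitive oriented graph; since a transitive oriented graph on $t\geq 2$ vertices is acyclic (a directed cycle together with transitivity would force a digon), $T$ and hence $Q$ is non-strong. So $Q$ is a transitive composition to which Theorem \ref{Dtransitive} applies, and $u$ (resp.\ $v$) lies in the initial (resp.\ terminal) component as required. I then reconcile the two characterisations. Because $Q$ is non-strong, conditions (ii) and (iii) of Theorem \ref{mainthm} are vacuous (they presuppose a strong $S$, equivalently a strong $Q$), so ``none of (i)--(iii)'' reduces to ``$Q$ and $\overleftarrow{Q}$ are not isomorphic to any digraph of Table \ref{exceptions}''. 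Inspecting that table, the only non-strong entry is $TT_3[u,\overline{K}_{n-2},v]$ of row (b) (its second form and every other row are strong), and this digraph is fixed under reversal with the names of $u,v$ swapped; hence for non-strong $Q$ condition (i) is equivalent to $Q\cong TT_3[u,\overline{K}_{n-2},v]$. On the other side I must show the three exceptions of Theorem \ref{Dtransitive} collapse, under our hypotheses, to this same digraph: the exception $C_2[H,K_1]$ is strong and so cannot occur; the exception $P_2[T^+_u,v]$ or $P_2[u,T^-_v]$ reduces, by Remark \ref{Remark-trancom}, to $P_2[u,v]$ or $TT_3[u,\overline{K}_{n-2},v]$, where $P_2[u,v]$ is excluded since it has $d_Q^+(u)=1$, contradicting the degree hypothesis; and the remaining exception is $TT_3[u,\overline{K}_{n-2},v]$ itself. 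Thus $Q$ has no good $(u,v)$-pair precisely when $Q\cong TT_3[u,\overline{K}_{n-2},v]$, matching the reduced form of condition (i), and the equivalence follows.

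I expect the main obstacle to be exactly this bookkeeping in the non-strong case: verifying that every exception produced by Theorem \ref{Dtransitive} is either ruled out by the standing degree hypotheses or recognised as a Table \ref{exceptions} digraph, and conversely that no non-strong Table \ref{exceptions} digraph other than row (b) can arise. The degree conditions $d_Q^+(u)\geq 2,\ d_Q^-(v)\geq 2$ and Remark \ref{Remark-trancom} are the two levers that make this collapse clean; once they are in place, the theorem is a direct combination of Theorems \ref{QTchara}, \ref{mainthm} and \ref{Dtransitive}.
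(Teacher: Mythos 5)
Your proposal is correct and takes essentially the same route as the paper: the paper obtains this theorem precisely by combining Theorem \ref{QTchara} (strong case handled by Theorem \ref{mainthm}, non-strong case by Theorem \ref{Dtransitive} together with Remark \ref{Remark-trancom} and the standing degree hypotheses). The bookkeeping you supply --- the vacuity of conditions (ii)--(iii) of Theorem \ref{mainthm} for non-strong $Q$, and the collapse of the exceptions of Theorem \ref{Dtransitive} to $TT_3[u,\overline{K}_{n-2},v]$ --- is exactly the implicit content of that combination.
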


\section{Remarks}\label{sec:remarks}
Recall  Theorem \ref{thm:SDbranchchar}  which shows that there is a very
simple characterization for the existence of a good $(u,v)$-pair for every $u$ and $v$ in semicomplete digraphs (as mentioned earlier this is equivalent to Theorem \ref{SDgoodpair}).

As we point out below, no similar property holds in semicomplete compositions. In fact, even the following stronger condition is not sufficient to guarantee the existence of a good $(u,v)$-pair in a given semicomplete composition $Q$: For every choice of  vertices $w,z$ of $Q$, there is a $(w,v)$-path which is arc-disjoint from an out-branching rooted at $u$ in $Q$ and, there is a $(u,z)$-path which is arc-disjoint from an in-branching rooted at $v$ in $Q$. For example, let $S$ be a semicomplete digraph of type A with $\alpha=2, |V_1|=|V_5|=1$. Suppose that $Q$ is a composition of $S$ such that $H(x)=x$ for each $x\in S-\{x_1,y_3\}$ and, $V(H(y))=\{y,y^{\prime}\}, |A(H(y))|=0$ for $y\in\{x_1,y_3\}$, see Figure \ref{figremark}. The digraph $Q$ has no good $(u,v)$-pair due to Theorem \ref{mainthm} (ii). 
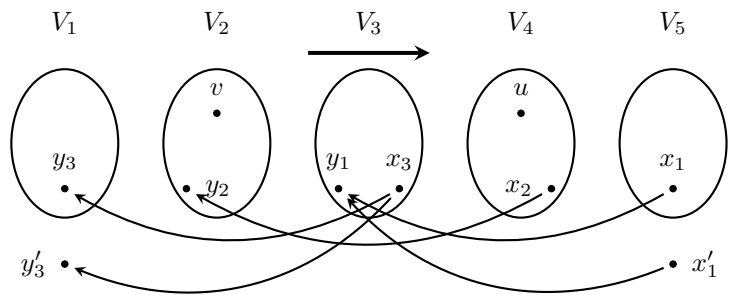
\begin{figure}[H]
	\centering\begin{tikzpicture}[scale=0.4]
	\foreach \i in {(-10,0),(-5,0),(0,0),(5,0),(10,0)}{\draw[ line width=0.8pt] \i ellipse [x radius=50pt, y radius=70pt];}
	\coordinate [label=center:$V_1$] () at (-10,4);
	\coordinate [label=center:$V_2$] () at (-5,4);
	\coordinate [label=center:$V_3$] () at (0,4);
	\coordinate [label=center:$V_4$] () at (5,4);
	\coordinate [label=center:$V_5$] () at (10,4);
	\draw[-stealth,line width=1.5pt] (-2,3) -- (2,3); 
	\filldraw[black](5,1) circle (3pt)node[label=above:$u$](u){};
	\filldraw[black](-5,1) circle (3pt)node[label=above:$v$](v){};
	\filldraw[black](-1,-1.5) circle (3pt)node[label=above:$y_1$](y1){};
	\filldraw[black](-6,-1.5) circle (3pt)node[label=right:$y_2$](y2){};
	\filldraw[black](-10,-1.5) circle (3pt)node[label=above:$y_3$](y3){};
	\filldraw[black](-10,-4) circle (3pt)node[label=left:$y_3^{\prime}$](y4){};
	\filldraw[black](10,-1.5) circle (3pt)node[label=above:$x_1$](x1){};
	\filldraw[black](6,-1.5) circle (3pt)node[label=left:$x_2$](x2){};
	\filldraw[black](1,-1.5) circle (3pt)node[label=above:$x_3$](x3){};
	\filldraw[black](10,-4) circle (3pt)node[label=right:$x_1^{\prime}$](x4){};
	
	\foreach \i/\j in {x1/y1,x2/y2,x3/y3}{\path[draw, line width=0.8pt] (\i) edge[bend left=30] (\j);}
	\foreach \i/\j in {x4/y1,x3/y4}{\path[draw, line width=0.8pt] (\i) edge[bend left=35] (\j);}
	\end{tikzpicture}\caption{An infinite class of semicomplete compositions $\mathcal{F}$ with no good $(u,v)$-pair but each $Q$ in the class has a branching $B_{v,Q}^-$ (resp., $B_{u,Q}^+$) which is arc-disjoint from a $(u,z)$-path (resp., $(w,v)$-path) for every choice of vertices $z,w$. The only arcs from right to left are the five arcs shown.}\label{figremark}
\end{figure}
Next we claim that $Q$ satisfies the above condition for any choice of vertices $z,w$ in $Q$. By symmetry, it suffices to show that there is a $(u,z)$-path which is arc-disjoint from an in-branching rooted at $v$ in $Q$. As $x_1$ and $x_1^{\prime}$ (resp., $y_3$ and $y_3^{\prime}$) are symmetric, we may assume that $z\notin\{x_1,y_3^{\prime}\}$. Construct a $(u,y_3)$-path $P$ from a $(u,x_2)$-path in $S\left\langle V_4 \right\rangle$ by adding arcs $\{x_2y_2,y_2x_3,x_3y_3\}$ and, construct $B_{v,Q}^-$ from a $(y_1,x_3)$-path in $S\left\langle V_3 \right\rangle$ by adding arcs $\{x_1y_1,x_1^{\prime}y_1, x_3y_3^{\prime},y_3^{\prime}v\}$ and all arcs from uncovered vertices to $x_1$. Then $P$ (or $P\cup \{y_3z\}$ if $z\notin P$) and $B_{v,Q}^-$ form a wanted pair. \\

\noindent{}{\bf Conflict of interest statement:}\\
There are no sources of conflict of interest regarding this paper.

\noindent{}{\bf Data availability statement:}\\
Data sharing is not applicable to this article as no data sets were generated or analyzed during the
current study.

\noindent{}{\bf Acknowledgments:}\\
Financial support from the Independent Research Fund Denmark under grant
DFF-7014-00037B is gratefully acknowledged. The second author was supported by China Scholarship
Council (CSC) No. 202106220108


\end{document}